\numberwithin{equation}{section}
\newtheorem{theorem}{Theorem}[section]
\newtheorem{corollary}{Corollary}[section]
\newtheorem{lemma}[theorem]{Lemma}
\newtheorem{proposition}{Proposition}[section]
\newtheorem{conjecture}[theorem]{Conjecture}
\newtheorem{definition}[theorem]{Definition}
\newtheorem{remark}[theorem]{Remark}
\def\bar{\overline}
\DeclareMathOperator\MF{{\mathcal{MF}}}
\DeclareMathOperator\PMF{{\mathcal{PMF}}}
\DeclareMathOperator\M{Mod}
\DeclareMathOperator\T{Teich}
\DeclareMathOperator\E{Ext}
\begin{document}

\title{Boundary representations of mapping class groups}
\author{Biao Ma}

\date{}
\maketitle
\begin{abstract}
Let $S = S_g$ be a closed orientable surface of genus $g \geq 2$ and $\M(S)$ be the mapping class group of $S$. In this paper, we show that the boundary representation of $\M(S)$ is ergodic using statistical hyperbolicity, which generalizes the classical result of Masur on ergodicity of the action of $\M(S)$ on the projective measured foliation space $\PMF(S).$ As a corollary, we show that the boundary representation of $\M(S)$ is irreducible.
\end{abstract}
\tableofcontents
\section{Introduction}
 Let $S = S_{g}$ be a closed, connected, orientable surface of genus $g$. Recall that the mapping class group $\M(S)$ of $S$ is defined to be the group of isotopy classes of orientation-preserving homeomorphisms of $S$. Throughout this paper, the genus $g$ is assumed to be at least $2$. {\it The space of measured foliations} $\MF(S)$ is the set of equivalence classes of non-zero measured foliations on $S$. The mapping class group $\M(S)$ acts on $\MF(S)$ and preserves a Radon measure $\nu$, called the {\it Thurston measure} on $\MF(S)$. Moreover, the space $\MF(S)$ is equipped with an $\mathbb{R}_{+}-$action that commutes with the $\M(S)-$action. Therefore, $\M(S)$ acts on the quotient $\PMF(S)$, called {\it the projective measured foliation space}, of $\MF(S)$ by $\mathbb{R}_{+}$ preserving the measure class $[\nu]$, called the Thurston measure (class) on $\PMF(S)$, defined by the Thurston measure on $\MF(S)$.\\

 One motivation of this paper is to use geometric objects, such as $\MF(S)$ and $\PMF(S)$ to understand unitary representations of $\M(S)$ (see, for instance, \cite{Paris2002},\cite{Costantino-Martelli14} for related topics).\\

\noindent \textbf{Main results.}~~~Recall that, for a probability measure class preserving action of $G$ on $(X,[\nu])$, one defines a unitary representation of $G$ on $L^2(X,\nu)$, called a {\it quasi-regular representation} (see Section \ref{subsection:quasi-regularrepresentation} for definitions). Hence, for a probability measure class preserving ergodic action, it is natural to ask that whether the quasi-regular representation is irreducible. Recall that a unitary representation is called \textit{irreducible} if it has no nontrivial closed invariant subspaces. Notice that this is not true for a measure preserving ergodic action as it always has $\mathbb{C}\mathds{1}_{X}$ as a nontrivial closed invariant subspace. For the ergodic action of $\M(S)$ on $\PMF(S)$ with respect to $[\nu]$, we prove:
\begin{theorem}[See Corollary \ref{corollary:mappingclassgroupirreducibility}]
	Let $S = S_g$ be a closed surface of genus $g \geq 2$. The quasi-regular unitary representation of the mapping class group $\M(S)$ on $L^2(\PMF(S),\nu)$, the space of square integrable functions on $\PMF(S)$ with respect to the Thurston measure $\nu$, is irreducible.
\end{theorem}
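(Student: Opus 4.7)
The plan is to deduce irreducibility from ergodicity of the boundary representation of $\M(S)$, which will have been established (via statistical hyperbolicity) in the main body of the paper. The core idea is that, for the quasi-regular representation $\pi$ on $L^2(\PMF(S),\nu)$, the commutant $\pi(\M(S))'$ is governed by the $\M(S)$-invariant measurable structure on $\PMF(S)\times\PMF(S)$, and ergodicity of the boundary representation is precisely the tool that cuts this structure down to scalars.

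First, I would rephrase irreducibility as triviality of the commutant $\pi(\M(S))'$. By the standard analysis for quasi-regular representations coming from a measure-class preserving action, every $T\in\pi(\M(S))'$ is represented by an integral kernel $k_T\in L^2(\PMF(S)\times\PMF(S),\,\nu\otimes\nu)$ (or, more generally, by a measurable field of operators after suitable disintegration), and the commutation relation $T\pi(g)=\pi(g)T$ translates into an $\M(S)$-equivariance relation on $k_T$ weighted by the Radon-Nikodym cocycle $\sigma(g,x)=\tfrac{dg_{\ast}\nu}{d\nu}(x)$.

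Next, I would invoke ergodicity of the boundary representation. Interpreting this ergodicity as saying that the only $\sigma$-twisted $\M(S)$-equivariant kernels on $\PMF(S)\times\PMF(S)$ are those concentrated on the diagonal, I would conclude that $k_T$ reduces to a multiplication operator on $L^2(\PMF(S),\nu)$: that is, $T=M_\phi$ for some $\phi\in L^\infty(\PMF(S),\nu)$. Masur's classical ergodicity theorem for $\M(S)\curvearrowright(\PMF(S),\nu)$ then forces $\phi$ to be $\nu$-a.e.\ constant, so $T$ is a scalar operator, and hence $\pi(\M(S))'=\mathbb{C}\cdot I$.

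The main obstacle in this plan is ensuring that the precise version of "ergodicity of the boundary representation" proved via statistical hyperbolicity is strong enough to trivialize $k_T$ for every commuting operator $T$, and not only for the simpler class of $\M(S)$-invariant scalar-valued functions on $\PMF(S)$. In practice this amounts to upgrading ordinary ergodicity on $\PMF(S)$ to a form of double ergodicity on $\PMF(S)\times\PMF(S)$ (twisted by the cocycle $\sigma$), which is exactly the nontrivial output that statistical hyperbolicity of the Teichm\"uller metric is designed to deliver in the mapping class group setting. Once this input is granted, the passage from ergodicity to irreducibility follows by the kernel argument above.
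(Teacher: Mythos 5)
Your plan takes a genuinely different route from the paper, and it has a real gap at the crucial step. The paper's own proof of Corollary \ref{corollary:mappingclassgroupirreducibility} is a one-line application of the abstract result Proposition \ref{Irre} (from Bader--Muchnik): once Theorem \ref{theoreom:ergodicitymcg} is established, i.e.\ the weak operator topology convergence $M^{f}_{(E_n,e_n)} \to m(f)P_{\mathds{1}_{X}}$ for a dense family of $f$, one observes that each $M^{f}_{(E_n,e_n)}$ is a finite linear combination of the $\pi_{\nu}(\gamma)$, so the limit $m(f)P_{\mathds{1}_{X}}$ lies in the bicommutant $\pi_{\nu}(\M(S))''$. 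Any $T\in \pi_{\nu}(\M(S))'$ therefore commutes with $P_{\mathds{1}_{X}}$, hence $T\mathds{1}_{X}=c\mathds{1}_{X}$ for some scalar $c$, and then commuting $T$ with $m(f)P_{\mathds{1}_{X}}$ gives $Tf = cf$ on a dense family of $f$, so $T=cI$. This is a cyclicity argument with no mention of kernels or of the product space $\PMF(S)\times\PMF(S)$.

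Your proposal instead tries to run a Schur-type kernel analysis, and this is where the gap is. First, the claim that every $T$ in the commutant is represented by a kernel $k_T\in L^2(\PMF(S)\times\PMF(S),\nu\otimes\nu)$ is false: the identity operator has no such kernel, since $\nu$ is nonatomic so the diagonal has $\nu\otimes\nu$-measure zero. You hedge with ``a measurable field of operators after suitable disintegration,'' but that is exactly the hard part, and it is left unaddressed. Second, and more importantly, the step you single out as the ``main obstacle''---reinterpreting ergodicity of the boundary representation as a statement that the only $\sigma$-twisted $\M(S)$-equivariant kernels on $\PMF(S)\times\PMF(S)$ are supported on the diagonal---is not something the paper proves, nor is it what the paper's ergodicity theorem asserts. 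Theorem \ref{theoreom:ergodicitymcg} is a statement about weak operator convergence of the averaged operators $M^{f}_{(E_n,e_n)}$, not a rigidity statement about twisted-equivariant kernels; while condition (2) of Theorem \ref{criterion2} has the flavour of double equidistribution, it is a hypothesis used along the way, not an output of the form your plan requires. Claiming that statistical hyperbolicity ``is designed to deliver'' the twisted double-ergodicity you need is an assertion, not an argument, and it is precisely the missing content. If you want a route that avoids Proposition \ref{Irre}, you would need to actually prove this twisted kernel rigidity (and fix the kernel-representation issue); as written, both links in the chain are missing, whereas the paper's cyclicity argument sidesteps them entirely.
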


This theorem is a corollary of an ergodic-type theorem for quasi-regular representations and it is this ergodic-type theorem that is much interesting and has many corollaries.\\

Given an action of a discrete group $G$ on a Borel probabililty space $(X,\mu)$ preserving the measure class $[\mu]$, denote the associated quasi-regular unitary representation $G \curvearrowright L^2(X,\mu)$ by $\pi_{\mu}$ and the projection to the subspace of constant functions by $P_{\mathds{1}_X}$. The representation $\pi_{\mu}$ is said to be \textit{ergodic with respect to $(K_n,e_n,f)$}, where $K_n \subset G$ is a finite subset with $|K_n| \to \infty$, $e_n: K_n \to X$ is a map and $f$ is a bounded Borel function on $X$, if we have the following convergence in weak operator topology:
$$\frac{1}{|K_n|} \sum_{g \in K_n} f(e_n(g))\frac{\pi_{\mu}(g)}{\langle \pi_{\mu}(g)\mathds{1}_X,\mathds{1}_X\rangle} \to fP_{\mathds{1}_X}.$$

The ergodicity of representations generalizes the ergodicity of group actions and it in fact implies that the irreducibility \cite[Proposition 2.5]{BLP}. Our main result is

\begin{theorem}[See Theorem \ref{theoreom:ergodicitymcg}]\label{intro:maintheorem}

There exist a sequence of finite subsets $\{E_n\}$ of $\M(S)$ and maps $e_n: E_n \to \PMF(S)$ such that, for every bounded Borel functions $f$, the quasi-regular representation $\pi_{\nu}$, with respect to the Thurston measure $\nu$, is ergodic with respect to $(E_n,e_n,f)$. 
\end{theorem}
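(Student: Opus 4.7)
The plan is to take $E_n = \{g \in \M(S) : d_\T(x_0, g\cdot x_0) \le n\}$ for a fixed basepoint $x_0 \in \T(S)$, and to define $e_n(g) \in \PMF(S)$ as the attracting endpoint of the Teichm\"uller ray from $x_0$ through $g\cdot x_0$. For generic $g \in E_n$ this geodesic extends to a bi-infinite Teichm\"uller geodesic with both endpoints in the uniquely ergodic part of $\PMF(S)$, giving a repelling endpoint $e_n^-(g)$ as well. Since the convergence is in the weak operator topology, it suffices by density to test against bounded continuous $\phi, \psi \in L^2(\PMF(S),\nu)$ and prove
$$\frac{1}{|E_n|}\sum_{g\in E_n} f(e_n(g))\,\frac{\langle \pi_\nu(g)\phi,\psi\rangle}{\langle \pi_\nu(g)\mathds{1}_X,\mathds{1}_X\rangle}\longrightarrow \int\phi\,d\nu \cdot \int f\bar\psi\,d\nu.$$

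The heart of the argument is a pointwise boundary asymptotic: for $g$ in a density-one subset of $E_n$,
$$\frac{\langle \pi_\nu(g)\phi,\psi\rangle}{\langle \pi_\nu(g)\mathds{1}_X,\mathds{1}_X\rangle}\ \sim\ \phi(e_n^-(g))\cdot \bar\psi(e_n(g)).$$
Writing $\pi_\nu(g)\phi(x)=\sqrt{dg_*\nu/d\nu(x)}\,\phi(g^{-1}x)$, the Radon--Nikodym weight $\sqrt{dg_*\nu/d\nu}$, normalised by its $L^1$-norm (which equals the scalar $\langle \pi_\nu(g)\mathds{1}_X,\mathds{1}_X\rangle$), concentrates on a small neighbourhood of $e_n(g)$, explaining the factor $\bar\psi(e_n(g))$; for $x$ in that neighbourhood, $g^{-1}x$ lies in a small neighbourhood of $e_n^-(g)$, explaining the factor $\phi(e_n^-(g))$. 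Quantifying this concentration requires the Hubbard--Masur parameterisation of $\MF(S)$ and Rafi's coarse distance formula for Teichm\"uller geodesics, in order to estimate $dg_*\nu/d\nu$ uniformly in terms of $d_\T(x_0, g\cdot x_0)$.

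Given this asymptotic, the weighted sum reduces to the Birkhoff-type average
$$\frac{1}{|E_n|}\sum_{g\in E_n} f(e_n(g))\bar\psi(e_n(g))\,\phi(e_n^-(g)),$$
which converges to $\int\phi\,d\nu\cdot\int f\bar\psi\,d\nu$ by the statistical hyperbolicity of $\T(S)$ (Dowdall--Duchin--Masur), which asymptotically decouples $e_n^-(g)$ from $e_n(g)$, together with Athreya--Bufetov--Eskin--Mirzakhani lattice counting (a quantitative refinement of Masur's ergodicity on $\PMF(S)$), which equidistributes each coordinate toward the Thurston measure; the thin-part contribution is negligible by the subexponential growth of orbit points escaping into cusps. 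The hardest step is establishing the pointwise boundary asymptotic uniformly over the thick part of $E_n$; once this concentration of $dg_*\nu/d\nu$ near $e_n(g)$ is in hand, the remaining ``law of large numbers'' step is a standard application of Teichm\"uller dynamics.
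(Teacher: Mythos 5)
Your proposal captures the right geometric intuition and is recognizably the same family of strategy as the paper: a Bader--Muchnik type argument in which the normalized Radon--Nikodym weight $\mu_g := \Phi(g)^{-1}c(g,\nu)^{1/2}\nu$ concentrates near the radial boundary point, paired with decorrelation of forward and backward endpoints and the Athreya--Bufetov--Eskin--Mirzakhani equidistribution. But there are two genuine gaps that the proposal glosses over.

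First, the reduction by density to continuous $\phi,\psi$ is not free. To upgrade weak-operator convergence from a dense subspace of $L^2(\PMF(S),\nu)$ to the whole space one needs $\sup_n\|M^{f}_{(E_n,e_n)}\|_{\mathrm{op}}<\infty$. This uniform boundedness is precisely condition (4) of Theorem \ref{criterion2}, and in the mapping class group setting it is one of the hardest parts of the argument (Theorem \ref{uniformbounded}), requiring the shadow lemma, Minsky's inequality, the Harish--Chandra estimates, and the thick-segment property of $E_n$. Your closing remark that once the concentration of $dg_*\nu/d\nu$ is in hand ``the remaining law of large numbers step is a standard application of Teichm\"uller dynamics'' is not correct; without the uniform bound the density argument simply does not apply, so the concentration estimate and the ABEM equidistribution alone do not give ergodicity of the representation.

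Second, taking $E_n$ to be the metric ball leaves a thin-part problem that your proposal does not actually resolve. Each term in the operator average carries a factor $1/\Phi(\gamma)$, and the lower bound on $\Phi(\gamma)$ (the Harish--Chandra estimate, Theorem \ref{Harish-Chandra}) is proved only under the thickness hypothesis of $\mathcal{E}(\theta,\epsilon,n,o,\rho)$; the proof of Lemma \ref{lemma:inverseminsky} uses Theorem \ref{theorem:DDM} via the long thick segment near $\gamma\cdot o$, which is exactly condition (c) in the definition of $E_n$. For $\gamma$ ending in or passing through the thin part you have no comparable control. Moreover, the phrase ``subexponential growth of orbit points escaping into cusps'' is not accurate: orbit points in the ball whose radial direction is not in $U(\theta,\epsilon)$ still grow like $e^{hn}$ (with a smaller multiplicative constant), since $U(\theta,\epsilon)$ has measure one but not measure-one uniformly at every scale. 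The paper's choice of $E_n$ as a symmetric subset of the annulus $C(n,\rho)$ intersected with the thickness conditions is not a cosmetic simplification; it is what makes Lemma \ref{lemma:longsegment}, Theorem \ref{Harish-Chandra}, and the uniform boundedness work, while still having full asymptotic density by ABEM (Corollary \ref{corollary:exponentialgrowth}).

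As a minor point, your $e_n^-(g)$ is defined as the backward endpoint of the bi-infinite Teichm\"uller geodesic extending $[x_0,g\cdot x_0]$, but the object the cocycle computation actually singles out is $\xi_{g^{-1}}=e_n(g^{-1})$: a short computation gives $g^{-1}_*\mu_g=\mu_{g^{-1}}$, which concentrates near $\xi_{g^{-1}}$, not near the horizontal foliation of the quadratic differential determined by $[x_0,g\cdot x_0]$. These two directions are close for typical $g$, but the ABEM counting in Theorem \ref{ABEM2} and condition (2) of Theorem \ref{criterion2} are stated for $(\xi_\gamma,\xi_{\gamma^{-1}})$, so everything should be phrased in those terms from the outset.
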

Thus we obtain a generalization of Masur's classical result on ergodicity of the action $\M(S) \curvearrowright \PMF(S)$ \cite{Masur_ergodic}. One of the key steps in our proof is the following result concerning matrix coefficients of representations which might be of independent interests.
\begin{theorem}[See Theorem \ref{Harish-Chandra} for the precise statement]
	 There exist a sequence of finite subsets $\{ E_n\}$ of $\M(S)$ for $n >> 0$ with exponential growth and constants $a_1 > 0, a_2 > 0,b_1, b_2,c_1 >0$ such that for every $g \in E_n,$ $$ 
	(a_1n-c_1\ln \ln n +b_1)e^{-\frac{h}{2}n} \leq \langle \pi_{\nu}(g)\mathds{1}_{\PMF(S)},\mathds{1}_{\PMF(S)}\rangle \leq (a_2n+b_2)e^{-\frac{h}{2}n}. $$
\end{theorem}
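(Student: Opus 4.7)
The plan is to compute the matrix coefficient as an explicit integral on $\PMF(S)$ and then estimate it via a level-set decomposition using statistical hyperbolicity of $\M(S)$.

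First I would rewrite the matrix coefficient. By the defining formula of the quasi-regular representation,
$$\langle \pi_{\nu}(g)\mathds{1}_{\PMF(S)},\mathds{1}_{\PMF(S)}\rangle \;=\; \int_{\PMF(S)}\sqrt{\frac{dg_*\nu}{d\nu}(\mu)}\,d\nu(\mu).$$
Set $h = 6g-6$. Fix a base hyperbolic structure $X_0\in\T(S)$ and parametrize $\PMF(S)$ by the transversal of measured foliations of $X_0$-length one. Using that the Thurston measure on $\MF(S)$ is $\M(S)$-invariant and homogeneous of degree $h$, one obtains
$$\frac{dg_*\nu}{d\nu}([\mu])\;=\;\ell_{X_0}(g^{-1}\mu)^{-h},$$
where $\mu$ denotes the length-one representative. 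Hence the matrix coefficient equals $\int_{\PMF(S)}\ell_{X_0}(g^{-1}\mu)^{-h/2}\,d\nu([\mu])$, a mapping-class-group analog of the Harish--Chandra $\Xi$-function.

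Next I would construct the sets $E_n$. Combining Athreya--Bufetov--Eskin--Mirzakhani lattice-point counting on Teichmüller space with statistical hyperbolicity of $(\M(S),d_{\T})$, one selects $E_n$ inside the shell $\{g\colon n-C\leq d_{\T}(X_0,gX_0)\leq n+C\}$ of cardinality of order $e^{hn}$ such that for every $g\in E_n$ the Teichmüller geodesic from $X_0$ to $gX_0$ spends almost all its time in a fixed thick part of moduli space. For such $g$, the horizontal and vertical foliations $\mu^{\pm}_g$ of the defining quadratic differential satisfy $\ell_{X_0}(g^{-1}\mu^+_g)\asymp e^{-n}$ and $\ell_{X_0}(g^{-1}\mu^-_g)\asymp e^{n}$, and the function $[\mu]\mapsto \ell_{X_0}(g^{-1}\mu)$ is uniformly controlled across $\PMF(S)$.

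For the integral I would slice $\PMF(S)$ into the regions $U_k = \{[\mu]\colon e^{n-2(k+1)}\leq \ell_{X_0}(g^{-1}\mu)\leq e^{n-2k}\}$ for $k=0,1,\dots,n$. The central technical input is a Thurston-measure shadow-lemma estimate saying that $\nu\{[\mu]\colon \ell_{X_0}(g^{-1}\mu)\leq t\}\asymp t^{h}e^{-hn}$ over the relevant range, whence $\nu(U_k)\asymp e^{-hk}$. On $U_k$ the integrand is comparable to $e^{-(n-2k)h/2}$, so each slice contributes a quantity comparable to $e^{-hn/2}$, and summing over $k=0,\dots,n$ produces the leading-order $(an+b)\,e^{-hn/2}$ behaviour advertised in the theorem.

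The hardest step, I expect, is the lower bound with the precise $c_1\ln\ln n$ correction. The upper bound is cleaner because it uses only the one-sided measure inequality combined with the slicing. The lower bound requires a two-sided shadow estimate valid slice-by-slice even when the geodesic from $X_0$ to $gX_0$ makes small excursions into the thin part of moduli space; the maximal excursion depth over $g\in E_n$ is typically controlled by a Borel--Cantelli / large-deviations argument, which is what produces the $\ln\ln n$ loss. Ensuring both bounds simultaneously on a set $E_n$ of genuinely exponential size is the central difficulty, and it is precisely here that the statistical hyperbolicity of $\M(S)$ enters in an essential way.
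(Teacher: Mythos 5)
Your high-level strategy (rewrite the matrix coefficient as $\int_{\PMF(S)}\ell_{X_0}(g^{-1}\mu)^{-h/2}d\nu$, choose $E_n$ via ABEM counting plus statistical hyperbolicity, slice by level sets, sum slice contributions) does match the paper's route; the paper does essentially the same thing, except it first converts to intersection numbers via $\E_{\gamma\cdot o}(\eta)\asymp i(\xi_\gamma,\eta)^2 e^{2n}$ and then estimates $\Psi(\gamma)=\int i(\xi_\gamma,\eta)^{-h/2}d\nu$ through its distribution function, which is equivalent to your slicing. However, there are two real gaps.

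First, the entire weight of the argument rests on the assertion that $\nu\{[\mu]:\ell_{X_0}(g^{-1}\mu)\leq t\}\asymp t^{h}e^{-hn}$, but the exponent here is wrong: since the Thurston measure satisfies $\nu\{\eta:i(\eta,\xi)\leq M\}\asymp M^{h/2}$ (and $\ell_{gX_0}(\mu)\asymp i(\xi_\gamma,\mu)e^{n}$ on the range where a two-sided Minsky bound holds), the correct estimate is $t^{h/2}e^{-hn/2}$. Your downstream computation $\nu(U_k)\asymp e^{-hk}$ is in fact consistent with the corrected exponent, not with the one you wrote, so this looks like more than a typo. More seriously, this measure-regularity estimate is treated as a black box, while it is the main content of the paper's Section 4: it is proved only after a pants-curve (Diophantine-type) approximation of uniquely ergodic directions (Proposition \ref{proposition:pantscurveapp}) together with an explicit computation in Dehn--Thurston coordinates (Theorem \ref{theorem:regularity}). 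Without that argument the level-set slicing has no foundation, and nothing in your sketch indicates how the estimate would be obtained. Second, the $\ln\ln n$ mechanism you describe (Borel--Cantelli / large deviations controlling excursion depth into the thin part) is not the one that appears. The sets $E_n=\mathcal{E}(\theta,\epsilon,n,o,\rho)$ are defined so that for every $\gamma\in E_n$ the geodesic $[o,\gamma\cdot o]$ has a $\theta$-thick subsegment of length $\tfrac{1}{3h}\ln\ln n$ ending at $\gamma\cdot o$ (Lemma \ref{lemma:longsegment}); this thick segment feeds into a DDM thin-triangle argument (Lemma \ref{lemma:inverseminsky}) that gives the reverse Minsky inequality $i^2(\xi_\gamma,\eta)\succ \E_{\gamma\cdot o}(\eta)e^{-2n}$ only when $i(\xi_\gamma,\eta)\geq F\ln n\,e^{-2n}$, and it is this explicit cutoff $\bar M = F\ln n\,e^{-2n}$ that produces $\Psi(\gamma)_{\geq\bar M}\asymp 2n-\ln\ln n$ and hence the $c_1\ln\ln n$ term in the lower bound. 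That reverse Minsky inequality is the precise point where statistical hyperbolicity is used, and your proposal never states it; saying the function $\ell_{X_0}(g^{-1}\cdot)$ is ``uniformly controlled across $\PMF(S)$'' papers over exactly the region near $\mu_g^{+}$ where it is not.
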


The above theorem should be compared with \cite[Proposition 3.2]{Boyer17}. Finally, we remark that, for this quasi-regular representation of $\M(S)$, we can also show that it is \textit{tempered}, meaning that it is weakly contained in the regular representation of $\ell^2(\M(S))$ (see Proposition \ref{proposition:tempered}).\\

\noindent \textbf{Historical remarks.}~~~The main theorem is related to a question of Bader-Muchnik in the context of random walks on groups. Namely, let $G$ be a discrete group and $\mu$ be a probability measure on $G$. Let $(\partial G,\nu)$ be the Poisson boundary of $G$ associated to the $\mu-$random walk on $G$. Then the measure class $[\nu]$ is $G-$invariant, hence defines a quasi-regular representation of $G$ on $L^2(\partial G, \nu)$. In \cite{BaderMuchnik}, inspired by the cases of free groups and lattices in Lie groups, Bader-Muchnik proposed the following conjecture.
\begin{conjecture}[\cite{BaderMuchnik}]
	For a locally compact group $G$ and a spread-out probability measure $\mu$ on $G$, the quasi-regular representation associated to the $\mu-$Poisson boundary of $G$ is irreducible.
\end{conjecture}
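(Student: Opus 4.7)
The plan is to adapt the ergodicity-implies-irreducibility framework recalled in the introduction, which reduces the conjecture to establishing a Harish-Chandra-type matrix-coefficient estimate together with an equidistribution statement on the boundary. For a spread-out $\mu$ on a locally compact group $G$, Kaimanovich's theory provides a Poisson boundary $(\partial G, \nu)$ on which $G$ acts quasi-invariantly, so the quasi-regular representation $\pi_\nu$ on $L^2(\partial G, \nu)$ has matrix coefficients
\[
\langle \pi_\nu(g)\mathds{1}_{\partial G}, \mathds{1}_{\partial G}\rangle = \int_{\partial G} \sqrt{\tfrac{dg_*\nu}{d\nu}(\xi)}\, d\nu(\xi).
\]
By the general principle that representation-ergodicity implies irreducibility, it suffices to exhibit finite sets $E_n \subset G$ with $|E_n| \to \infty$ and maps $e_n : E_n \to \partial G$ so that the operators $\frac{1}{|E_n|}\sum_{g\in E_n} f(e_n(g)) \pi_\nu(g)/\langle \pi_\nu(g)\mathds{1},\mathds{1}\rangle$ converge to $fP_{\mathds{1}}$ in weak operator topology for every bounded Borel $f$.

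First I would establish the Harish-Chandra estimate. The spread-out hypothesis guarantees that some convolution $\mu^{*n_0}$ has a non-trivial absolutely continuous part, which allows Kaimanovich's entropy criterion and the ray approximation theorem to apply. These yield, for $\mu^{*n}$-typical $g$, the Shannon-McMillan-Breiman asymptotic $-\tfrac{1}{n}\log \tfrac{dg_*\nu}{d\nu}(\xi) \to h_\mu$ in probability, whence $\langle \pi_\nu(g)\mathds{1},\mathds{1}\rangle \asymp e^{-h_\mu n/2}$ up to subexponential corrections. This is the analogue, in the abstract boundary-theoretic setup, of the estimate proved for $\M(S)$ in the theorem cited above.

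Second, I would construct $E_n$ directly from the random walk: take $E_n$ to be a high-probability ``entropy shell'' in the support of $\mu^{*n}$, namely the set of $g$ with $\mu^{*n}(g) \approx e^{-h_\mu n}$ and for which the Radon-Nikodym derivative $\tfrac{dg_*\nu}{d\nu}$ concentrates around $e^{-h_\mu n}\delta_{\xi_g}$ for a well-defined boundary point $\xi_g \in \partial G$; set $e_n(g):=\xi_g$. Combining stationarity $\nu = \mu * \nu$ with the Kaimanovich-Vershik entropy equality, one gets the equidistribution $\frac{1}{|E_n|}\sum_{g\in E_n} \delta_{e_n(g)} \rightharpoonup \nu$. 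Testing the averaged operators against the dense family of functions $\sqrt{dh_*\nu/d\nu}$ ($h \in G$) and invoking double ergodicity of $G \curvearrowright (\partial G \times \partial G, \nu \otimes \nu)$, one kills all off-diagonal contributions and recovers $fP_{\mathds{1}}$.

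The main obstacle is the equidistribution step in the full generality demanded by the conjecture. Concentration of $\tfrac{dg_*\nu}{d\nu}$ on a single boundary point requires a local-limit-type behaviour of $\mu^{*n}$ together with non-arithmetic mixing of the boundary action, both of which are genuinely hard to extract from the spread-out hypothesis alone. For groups admitting a geometric model (Gromov hyperbolic, $\mathrm{CAT}(0)$, or actions on Teichm\"uller space as in the present paper) one can substitute geometric boundary-convergence arguments for the local limit theorem; for higher-rank or non-discrete $G$ the two-sided Kaimanovich-Vershik boundary and Furstenberg's disintegration of $\nu\otimes\check\nu$ must take its place. A uniform treatment applicable to every spread-out $\mu$ on every locally compact $G$ probably demands a general ``mixing of the Poisson double boundary'' theorem, and it is exactly at this point that the conjecture remains open.
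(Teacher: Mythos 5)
There is a fundamental problem here that no amount of technical work can repair: the statement you are trying to prove is a \emph{conjecture} quoted from Bader--Muchnik, not a theorem of the paper, and the paper itself records that it is \emph{false} in the stated generality. In the historical remarks the author cites Bj\"orklund--Hartman--Oppelmayer, who produced random walks on certain Lamplighter groups and solvable Baumslag--Solitar groups whose Poisson-boundary quasi-regular representations are \emph{not} irreducible. So the universally quantified statement (``for a locally compact group $G$ and a spread-out probability measure $\mu$\dots'') admits counterexamples, and your strategy must break somewhere on those groups. Your own closing paragraph correctly identifies where: the ``entropy shell'' construction and the concentration of $\frac{dg_*\nu}{d\nu}$ near a single boundary point $\xi_g$ require geometric or mixing input (local limit behaviour, non-arithmeticity, boundary convergence along geodesics) that the spread-out hypothesis alone does not supply --- and for the Lamplighter and Baumslag--Solitar counterexamples it genuinely fails. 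To your credit, you flag the argument as incomplete rather than claiming a proof; but the correct conclusion is stronger than ``the conjecture remains open at this point'': it is that the conjecture as stated is refuted, and any positive result must restrict the class of groups and measures.

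For what it is worth, your sketch is a reasonable description of the \emph{known positive cases} and is broadly parallel to what this paper actually does, but for a different statement. The paper does not attempt the Poisson-boundary conjecture; it proves irreducibility (via ergodicity of the representation) for the specific boundary representation of $\M(S)$ on $(\PMF(S),\nu)$ with the Thurston measure, replacing your entropy-shell $E_n$ by the explicitly constructed sets $\mathcal{E}(\theta,\epsilon,n,o,\rho)$ of mapping classes whose geodesics spend a definite proportion of time in the thick part, replacing the Shannon--McMillan--Breiman input by a Harish-Chandra estimate proved through extremal lengths and intersection numbers, and replacing your equidistribution step by the lattice-point asymptotics of Athreya--Bufetov--Eskin--Mirzakhani together with statistical hyperbolicity. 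If you want a provable statement, aim at that theorem (or at the hyperbolic-group and $CAT(-1)$ cases of Garncarek and Boyer), not at the conjecture in full generality.
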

 We now mention briefly some progress toward this conjecture. As mentioned above, this conjecture is true for certain random walks on free groups and lattices in Lie groups (see \cite{BaderMuchnik} and references therein). Hence it is true for the mapping class group $\M(S) = SL(2,\mathbb{Z})$ of closed surface of genus one acting on $\PMF(S) = S^1$ with respect to the Lebesgue measure which is also identified with the Thurston measure on $\PMF(S)$. Notice that all identifications are $\M(S)-$equvariant. For lattices in Lie groups, one can also deduce the irreducibility from ergodicity of the associated quasi-regular representation (see \cite{BLP}). The conjecture is then verified in \cite{BaderMuchnik} for the fundamental group of compact negatively curved manifolds with respect to the Patterson-Sullivan measure by Bader-Muchnik. Their result has been further generalized to hyperbolic groups \cite{garncarek2016boundary} with respect to the Patterson-Sullivan measure by Garncarek and some discrete subgroups of the group of isometries of a $CAT(-1)$ space with non-arithmetic spectrum by Boyer \cite{Boyer17}. Note that in all cases above,  the Patterson-Sullivan measure on the Gromov boundary coincides with the Poisson boundary  of $(G,\mu)$ for some probability measure $\mu$ on $G$. However, Bj\"{o}rklund-Hartman-Oppelmayer \cite{bjorklund2020random} recently showed that there are random walks on some Lamplighter groups and solvable Baumslag-Solitar groups that provide counterexamples to this conjecture.\\

The relationship between the main theorem and above progress is the following. On the one hand, there is a long history on exploiting similarities between mapping class groups and hyperbolic groups which is quite fruitful. To name very few among massive literatures, we mention \cite{MasurWolf}, \cite{MasurMinsky99} and \cite{Hamenstaedt2009a}. On the other hand, by \cite{ABEM}, the Thurston measure on $\PMF(S)$ is the Patterson-Sullivan measure on the Teichm\"{u}ller boundary (more precisely, Gardiner-Masur boundary, but this is irrelevant for our purpose) of the Teichm\"{u}ller space of $S$ which is in the similar situation with the previous known cases. \\

\noindent {\bf Strategy of the  proof.} The proof of Theorem \ref{intro:maintheorem} exhibits both homogeneous and hyperbolic features of Teichm\"{u}ller spaces. On the one hand, by regarding the Teichm\"{u}ller space $\T(S)$ of $S$ as the homogeneous space of $\M(S)$ and $\PMF(S)$ as the boundary, we follow the approach in Boyer-Pittet-Link \cite{BLP}, namely Theorem \ref{criterion2}, which works quite well for lattices in Lie groups. However, Teichm\"{u}ller spaces are in general not homogeneous spaces, so on the other hand, we make heavily use of hyperbolic features of Teichm\"{u}ller spaces, namely the statistical hyperbolicity defined in Dowdall-Duchin-Masur \cite{DDM}. There are three main steps in the whole proof. The first step is to construct desired finite subsets of $\M(S)$. This is achieved by carefully choosing elements in $\M(S)$ with enough hyperbolicity so that the cardinality of these subsets goes to infinity (in fact, we need the growth to be exponential). The subsets are described before Lemma \ref{lemma:longsegment} relying on \cite{DDM}. The second step is the main part of this paper, that is, the Harish-Chandra estimates (Theorem \ref{Harish-Chandra}). Unlike what have been done in \cite{BaderMuchnik} and \cite{Boyer17}, we deduce the Harish-Chandra estimates using Teichm\"{u}ller theory, especially extremal lengths and intersection number functions. The idea is that, instead of doing estimations directly, we first relate it to integrations on intersection numbers and then use the map considered in Masur-Minsky \cite{MasurMinsky99} which relates $\T(S)$ to the pants curves of $S$ to simplify integrations. It is one of the novelty of this paper and obtains condition (3) in Theorem \ref{criterion2} for free. The last step is to obtain uniform boundness for operators. We use statistical hyperbolicity here as well. As we don't have a metric structure on $\PMF(S)$ with nice regularities, we make use of again intersection numbers and unlike \cite{Boyer17}, we complete the proof by counting lattice points in balls. Finally, we remark that the idea of using intersection number functions as a kind of conformal metrics actually has already been used by Rees in \cite{Rees}.  

\subsection*{Acknowledgments.}
This paper is part of the author's thesis. The author would like to thank his advisor Professor Indira Chatterji for many discussions and the LJAD at Nice for its hospitality. He is also grateful to Professors Adrien Boyer, Ilya Gekhtman, Steven Kerckhoff, Wenyuan Yang for helpful discussions. Part of the work was done while the author participated in the program \textit{Random and Arithmetic Structures in Topology} hosted by the Mathematical Sciences Research Institute in Berkeley, California, during the Fall 2020 semester. This paper is supported by China Scholarship Council (No. 201706140166) and ISF Grant No. GIF I-1485-304.6/2019.

\section{Quasi-regular unitary representations}
\subsection{Quasi-regular representations of discrete groups.}\label{subsection:quasi-regularrepresentation}
In this section, we will recall ergodic quasi-regular representations and a criterion for showing ergodicity of representations. The reader is referred to \cite{BaderMuchnik},\cite{Boyer17} and \cite{BLP} for more details. \\

\noindent{\bf Quasi-regular unitary representations.}~~~ Let $G$ be a locally compact second-countable group and $X$ be a second-countable Hausdorff topological space. Let $\nu$ be a probability Borel measure on $X$. Assume that $G$ acts on $X$ as homeomorphisms and $G$ preserves the measure class $[\nu]$ of $\nu$, namely, $G$ preserves null $\nu-$measure sets . Choose $\nu \in [\nu]$, thus for every $\gamma \in G$, the measure $\gamma_{*}\nu$ is absolutely continuous with respect to $\nu$ and $\nu$ is absolutely continuous with respect to $\gamma_{*}\nu$. Denote the corresponding Radon-Nikodym derivative by $c(\gamma,\nu) = \frac{d\gamma_{*}\nu}{d\nu}$. One can construct a unitary representation $\pi_{\nu}$ of $G$ on $L^2(X,\nu)$ as follows. For every $f \in L^2(X,\nu)$, every $x \in X$ and every $\gamma \in G $, $\pi_{\nu}(\gamma)f(x)$ is defined to be $\pi_{\nu}(\gamma)f(x) = f(\gamma^{-1}x)c(\gamma, \nu)^{\frac{1}{2}}(x).$ The representation $\pi_{\nu}$ will be called a {\it quasi-regular (unitary) representation} of $G$. We remark that if $\nu$ and $\mu$ are in the same measure class, then $\pi_{\nu}$ and $\pi_{\mu}$ are unitary equivalent. Assume that $c(\gamma,\nu)^{\frac{1}{2}}$ is integrable for each $\gamma \in G$ with respect to $\nu$. The {\it Harish-Chandra function} $\Phi$ associated to $\pi_{\nu}$ is then defined to be the integral $$\Phi(\gamma) = \langle \pi_{\nu}(\gamma)\mathds{1}_{X}, \mathds{1}_{X}\rangle_{L^2(X,\nu)} =  \int_{X}c(\gamma,\nu)^{\frac{1}{2}}(x)d\nu(x).$$

\noindent{\bf Ergodic quasi-regular representations.}~~~ From now on, we always assume that $G$ is a discrete group. Let $(X,\nu),\pi_{\nu}$ as above and $\mathcal{B}(L^2(X,\nu))$ be the Banach space of bounded operators on $L^2(X,\nu)$. Let $e_{K}: K \longrightarrow X$ be a map from a finite subset $K$ of $G$ to $X$ and $f:X \longrightarrow \mathbb{C}$ be a bounded Borel function. Consider the following elements in $\mathcal{B}(L^2(X,\nu))$:
\begin{displaymath}
\begin{aligned}
& M^{f}_{(K,e_{K})}: L^2(X,\nu) \longrightarrow L^2(X,\nu),  \phi  \mapsto \frac{1}{|K|}\sum_{\gamma \in K}f(e_{K}(\gamma))\frac{\pi_{\nu}(\gamma)\phi}{\Phi(\gamma)},\\
&\phantom{=\;} P_{\mathds{1}_{X}}: L^2(X,\nu) \longrightarrow L^2(X,\nu), \phi \mapsto \int_{X}\phi d\nu\mathds{1}_{X},\\
&\phantom{=\;} m(f): L^2(X,\nu) \longrightarrow L^2(X,\nu), \phi \mapsto f\phi.
\end{aligned}
\end{displaymath}
We now introduce an ergodicity for quasi-regular representations which generalizes the usual ergodicity for measure class-preserving group actions. Recall that a sequence $F_n \in \mathcal{B}(L^2(X,\nu))$ {\it converges} to $F \in \mathcal{B}(L^2(X,\nu))$, written as $F_n \rightarrow F$, in the weak operator topology if, for every $\phi, \psi \in L^2(X,\nu)$, $\lim_{n \rightarrow \infty}\langle F_n(\phi),\psi \rangle_{L^2} = \langle F(\phi),\psi \rangle_{L^2}$.
\begin{definition}[\cite{BLP}]\label{definition:ergodicityofrepresentation}
	Let $G, (X,\nu),\pi_{\nu},f$ as above. Suppose that for every $n \in \mathbb{N}$, there is a pair $(K_n, e_n: K_n \longrightarrow X)$ such that $K_n$ is a finite subset of $G$ and such that $|K_n| \rightarrow \infty$ as $n \rightarrow \infty$. The representation $\pi_{\nu}$ is called ergodic with respect to $(K_n, e_n)$ and $f$, if we have the following convergence in the weak operator topology: $$M^{f}_{(K_n,e_{n})} \rightarrow m(f)P_{\mathds{1}_{X}}.$$
\end{definition}
\begin{remark}\label{remark:quasiregularirreducible}
	It is easy to see that the ergodicity of a measure class-preserving group action is weaker than the ergodicity of the associated quasi-regular representation. One could refer to [\cite{BLP}, Proposition 2.5] for its proof. 
\end{remark}
The following criterion for the ergodicity of a quasi-regular representation is essentially contained in \cite{BaderMuchnik} and summarized in \cite{BLP}.

\begin{theorem}[\cite{BLP} Theorem 2.2]\label{criterion}
	Let $G, (X, \nu)$ as above and $\pi_{\nu}$ be the associated quasi-regular representation of $G$ on $L^2(X,\nu)$. Let $L$ be a length function on $G$ and let $(X,d)$ be a metric space inducing the topology of $X$. For every $n \in \mathbb{N}$, let $E_n$ be a symmetric finite subset of $G$, that is $E_n = E_n^{-1}$, and $e_n : E_n \longrightarrow X$ be a map. Assume that the following conditions hold:
	\begin{itemize}
		\item[(1)] for every $g \in G$, $\|\pi_{\nu}(g)\mathds{1}_X\|_{L^{\infty}(X,\nu)} < \infty$,
		\item[(2)] $\lim_{n \rightarrow \infty}|E_n| = \infty$,
		\item[(3)] for all Borel subsets $W,V \subset X$ such that $\nu(\partial W) = \nu(\partial V) = 0$, $$\limsup_{n \rightarrow \infty}\frac{1}{|E_n|}|\left\{\gamma \in E_n: e_n(\gamma^{-1})\in W ~\mbox{and}~e_n(\gamma)\in V \right\}| \leq \nu(W)\nu(V),$$
		\item[(4)] for every $r \ge 0$, there is a non-increasing function $h_{r}: [0,\infty) \longrightarrow [0,\infty) $ such that $\lim_{s \rightarrow \infty}h_{r}(s) = 0$ and such that $$\forall n \in \mathbb{N}, \forall \gamma \in E_n, \frac{\langle \pi_{\nu}(\gamma)\mathds{1}_{X}, \mathds{1}_{\{x\in X: d(x,e_n(\gamma))\geq r\}}\rangle_{L^2}}{\Phi(\gamma)} \leq h_{r}(L(\gamma)),$$
		\item[(5)] $$\sup_{n}\left\|M^{\mathds{1}_{X}}_{E_n}\mathds{1}_{X}\right\|_{L^{\infty}(X,\nu)} < \infty.$$
	\end{itemize}
	Then the quasi-regular representation $\pi_{\nu}$ is ergodic with respect to $(E_n, e_n)$ and any $f \in \bar{H}^{L^{\infty}(X,\nu)}$, where $H$ is a vector space generated by $$ \{\mathds{1}_{U}:\nu(\partial U) = 0 ~\mbox{and}~ U ~\mbox{is a Borel subset of}~ X\}.$$
\end{theorem}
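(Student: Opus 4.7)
The plan is to verify $\langle M^f_{(E_n,e_n)}\phi,\psi\rangle\to\langle m(f)P_{\mathds{1}_X}\phi,\psi\rangle$ for all $\phi,\psi\in L^2(X,\nu)$ by combining the five hypotheses in three stages: uniform boundedness from (1) and (5) to reduce to a dense class of test vectors, a ``delta-function localization'' from (4) to replace each matrix coefficient by a product of indicators evaluated at $e_n(\gamma)$ and $e_n(\gamma^{-1})$, and finally an equidistribution count from (2) and (3) to evaluate the resulting Birkhoff-type sum. Condition (5) bounds $\|M^{\mathds{1}_X}_{E_n}\|_{L^\infty\to L^\infty}$ uniformly in $n$, which together with $|f(e_n(\gamma))|\le\|f\|_\infty$ gives $\sup_n\|M^f_{(E_n,e_n)}\|_{L^2\to L^2}<\infty$; condition (1) plays the analogous role for $m(f)P_{\mathds{1}_X}$. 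A standard $\varepsilon/3$ argument then shows it is enough to treat $f=\mathds{1}_U$ and to test against $\phi=\mathds{1}_V$, $\psi=\mathds{1}_W$ with $\nu(\partial U)=\nu(\partial V)=\nu(\partial W)=0$, since such indicators generate a dense subspace of $L^2(X,\nu)$ by outer regularity of a Borel probability measure on the metric space $(X,d)$.

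The core estimate, and the main obstacle, is to show that, for every $\varepsilon>0$ and after discarding an $o(|E_n|)$ subset of bounded length,
$$\frac{\langle\pi_\nu(\gamma)\mathds{1}_V,\mathds{1}_W\rangle_{L^2}}{\Phi(\gamma)}=\mathds{1}_W(e_n(\gamma))\,\mathds{1}_V(e_n(\gamma^{-1}))+o(1)$$
uniformly in the remaining $\gamma\in E_n$. For the $W$-factor, I split $\mathds{1}_W=\mathds{1}_{W\cap B(e_n(\gamma),r)}+\mathds{1}_{W\setminus B(e_n(\gamma),r)}$, bound the far piece directly by $h_r(L(\gamma))$ via condition (4) (since $|\mathds{1}_V|\le 1$), and use $\nu(\partial W)=0$ to equate the near piece with $\mathds{1}_W(e_n(\gamma))\int_{B(e_n(\gamma),r)}c(\gamma,\nu)^{1/2}d\nu$ up to an error that vanishes with $r$. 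For the $V$-factor, I apply the dual inequality to $\langle\mathds{1}_V,\pi_\nu(\gamma^{-1})\mathds{1}_W\rangle$, which is legitimate because $E_n=E_n^{-1}$ so condition (4) also applies to $\gamma^{-1}$, localizing the $V$-indicator around $e_n(\gamma^{-1})$. The delicate point is choosing a joint schedule $r=r(n)\to 0$ and length cutoff $T=T(n)\to\infty$ so both errors vanish after averaging; the finite-length shell $\{\gamma:L(\gamma)\le T\}$ is then discarded at cost $o(1)$ thanks to condition (2).

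Substituting the localized expression into $M^f_{(E_n,e_n)}$ reduces the desired convergence to
$$\lim_{n\to\infty}\frac{1}{|E_n|}\sum_{\gamma\in E_n}\mathds{1}_{U\cap W}(e_n(\gamma))\,\mathds{1}_V(e_n(\gamma^{-1}))=\nu(U\cap W)\,\nu(V),$$
which equals $\langle m(\mathds{1}_U)P_{\mathds{1}_X}\mathds{1}_V,\mathds{1}_W\rangle$. The $\limsup\le$ half is exactly condition (3) applied to the pair $(U\cap W,V)$. For the matching $\liminf\ge$ half, I apply (3) to the three pairs in which $U\cap W$ or $V$ is replaced by its complement (each complement still has $\nu$-null boundary), and combine the four estimates using that the four normalized sums add up to $1$ and that the four product targets $\nu(\cdot)\nu(\cdot)$ also add up to $1$; any deficit in one bound would contradict the others. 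This closes the argument on the dense family $H$, and the uniform bound from condition (5) extends the conclusion to $\bar H^{L^\infty(X,\nu)}$.
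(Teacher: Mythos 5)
The paper does not prove this theorem; it quotes it as \cite{BLP}, Theorem 2.2, and only reproves a close variant (Theorem \ref{criterion2}) where a few conditions are modified. Your reconstruction nonetheless follows the correct strategy of the original Boyer--Link--Pittet/Bader--Muchnik argument: localize the probability density $c(\gamma,\nu)^{1/2}/\Phi(\gamma)$ near $e_n(\gamma)$ via condition (4), handle the second factor through the symmetry $E_n = E_n^{-1}$ and the adjoint $\pi_\nu(\gamma)^{*} = \pi_\nu(\gamma^{-1})$, use conditions (2)--(3) for the counting limit, and close via uniform boundedness and density.

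One expositional imprecision is worth flagging. Your ``core estimate'' $\langle\pi_\nu(\gamma)\mathds{1}_V,\mathds{1}_W\rangle/\Phi(\gamma) = \mathds{1}_W(e_n(\gamma))\,\mathds{1}_V(e_n(\gamma^{-1})) + o(1)$ is correct after discarding a negligible set, but the decomposition you describe only yields the upper bound: dropping $\mathds{1}_V(\gamma^{-1}x)$ from the near piece gives $\le \mathds{1}_W(e_n(\gamma)) + h_r(L(\gamma))$, the dual estimate on $\gamma^{-1}$ gives $\le \mathds{1}_V(e_n(\gamma^{-1})) + h_r(L(\gamma))$, and taking the minimum produces the one-sided bound. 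The matching lower bound needs a separate complement argument already at the matrix-coefficient level --- write $\mathds{1}_V = \mathds{1}_X - \mathds{1}_{V^c}$ and $\mathds{1}_W = \mathds{1}_X - \mathds{1}_{W^c}$, use $\langle\pi_\nu(\gamma)\mathds{1}_X,\mathds{1}_X\rangle = \Phi(\gamma)$, and apply the upper bound to the three cross terms --- or, equivalently, one can avoid asserting the equality and instead run the complement trick once on the averaged quantities $\langle M^f_{(E_n,e_n)}\mathds{1}_{V^{\pm}},\mathds{1}_{W^{\pm}}\rangle$, combining four one-sided estimates with the exact identity $\langle M^f_{(E_n,e_n)}\mathds{1}_X,\mathds{1}_X\rangle = \frac{1}{|E_n|}\sum_{\gamma\in E_n} f(e_n(\gamma))$. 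Either route closes the gap with tools you already invoke. A very minor point: condition (1) does not serve to bound $m(f)P_{\mathds{1}_X}$ (that operator is trivially bounded by $\|f\|_\infty$); its role is to guarantee that the Harish--Chandra function $\Phi$ is finite and hence that $M^f_{(E_n,e_n)}$ is well-defined.
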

\begin{remark}
	Thanks to the condition (1), the Harish-Chandra function is defined for each $\gamma$ in $G$.
\end{remark}
\begin{proposition}[\cite{BaderMuchnik}]\label{Irre}
	Under the assumptions in the above theorem, if moreover $\nu$ is a Radon measure, then $\pi_{\nu}$ is irreducible.
\end{proposition}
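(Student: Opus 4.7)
The plan is to verify Schur's criterion: I will show that the commutant $\pi_\nu(G)'$ inside $\mathcal{B}(L^2(X,\nu))$ consists only of scalar operators. Fix $T \in \pi_\nu(G)'$; the goal is to prove $T = c\cdot\mathrm{Id}$ for some $c\in\mathbb{C}$. For each $n$, the averaging operator $M^{\mathds{1}_X}_{(E_n,e_n)}$ is a finite scalar combination of the operators $\pi_\nu(\gamma)/\Phi(\gamma)$, hence commutes with $T$. Applying Theorem \ref{criterion} with $f = \mathds{1}_X$ gives $M^{\mathds{1}_X}_{(E_n,e_n)} \to P_{\mathds{1}_X}$ in the weak operator topology, and since the commutant of any fixed operator is WOT-closed, $T$ commutes with $P_{\mathds{1}_X}$. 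Because $\nu$ is a probability measure, $P_{\mathds{1}_X}$ is the rank-one orthogonal projection onto $\mathbb{C}\mathds{1}_X$, which forces $T\mathds{1}_X = c\mathds{1}_X$ for some scalar $c\in\mathbb{C}$.

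Next I would repeat this step with arbitrary $f \in \bar{H}^{L^\infty(X,\nu)}$. Each $M^{f}_{(E_n,e_n)}$ is again a scalar combination of the $\pi_\nu(\gamma)$ and so commutes with $T$; the WOT limit furnished by Theorem \ref{criterion} yields that $T$ commutes with $m(f)P_{\mathds{1}_X}$. Evaluating the identity $T\, m(f) P_{\mathds{1}_X} = m(f) P_{\mathds{1}_X} T$ at $\mathds{1}_X$, and using both $P_{\mathds{1}_X}\mathds{1}_X = \mathds{1}_X$ and $m(f)\mathds{1}_X = f$ together with $T\mathds{1}_X = c\mathds{1}_X$, one obtains $T f = c f$ for every $f \in \bar{H}^{L^\infty(X,\nu)}$.

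The final step is to upgrade this to $Tf = cf$ on all of $L^2(X,\nu)$ by showing that $\bar{H}^{L^\infty(X,\nu)}$ is norm-dense in $L^2(X,\nu)$. This is precisely where the Radon hypothesis on $\nu$ is invoked. For any Borel $E\subseteq X$ and $\varepsilon>0$, outer regularity of $\nu$ gives an open $O \supseteq E$ with $\nu(O\setminus E)<\varepsilon$; a standard selection argument --- for instance, covering $\overline{O}$ by finitely many metric balls whose radii avoid the at-most-countably-many discontinuities of $r\mapsto\nu(B(x,r))$ --- replaces $O$ by a continuity set $U$ with $\nu(\partial U)=0$ and $\nu(E\triangle U)$ still small. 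Consequently indicators of continuity sets are $L^2$-dense among indicators of Borel sets, so $H$, and \emph{a fortiori} its $L^\infty$-closure $\bar{H}^{L^\infty(X,\nu)}$, is $L^2$-dense in $L^2(X,\nu)$. Combined with the previous step, $T$ agrees with $c\cdot\mathrm{Id}$ on a dense subspace, and boundedness of $T$ extends the equality to all of $L^2(X,\nu)$.

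The principal obstacle I anticipate is this density step: one must use Radon regularity to produce \emph{enough} continuity sets, and it is worth checking that the metric structure supplied by Theorem \ref{criterion} genuinely supports the ball argument above. The two preceding paragraphs are then essentially formal consequences of Theorem \ref{criterion} together with the WOT-closedness of commutants.
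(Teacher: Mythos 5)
The paper does not prove this proposition; it is stated with a citation to Bader--Muchnik (and the adjacent remark points to \cite[Proposition~2.5]{BLP}), so there is no in-text argument to compare against. Your Schur-commutant strategy is exactly the standard one used in those references, and the two formal paragraphs are correct: the commutant $\{T\}'$ is WOT-closed, so passing to the limit in $T M^{f}_{(E_n,e_n)} = M^{f}_{(E_n,e_n)} T$ gives $T\,m(f)P_{\mathds{1}_X}=m(f)P_{\mathds{1}_X}\,T$; applying this to $\mathds{1}_X$ together with $T\mathds{1}_X=c\mathds{1}_X$ (which follows from commuting with the rank-one projection $P_{\mathds{1}_X}$) yields $Tf=cf$ for all $f\in\bar H^{L^\infty(X,\nu)}$.

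The one genuine gap is in the density step, and it is a directional error. You propose to \emph{cover} $\overline{O}$ by finitely many continuity balls and take $U$ to be their union; but then $U\supseteq O\supseteq E$, so $U$ need not be close to $E$, and moreover $\overline O$ has no reason to be compact, so a finite subcover need not exist. The Radon hypothesis should be invoked the other way round: by inner regularity choose a compact $K\subseteq E$ with $\nu(E\setminus K)<\varepsilon$, and by outer regularity an open $O\supseteq E$ with $\nu(O\setminus E)<\varepsilon$. For each $x\in K$ pick a radius $r_x$ small enough that $B(x,r_x)\subseteq O$ and avoiding the at-most-countably-many $r$ with $\nu(\partial B(x,r))>0$; compactness of $K$ gives a finite subcover $U=\bigcup_{i=1}^m B(x_i,r_i)$, so $K\subseteq U\subseteq O$, $\partial U\subseteq\bigcup_i\partial B(x_i,r_i)$ has $\nu$-measure zero, and $\nu(E\triangle U)<2\varepsilon$. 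This yields $L^2$-density of $H$ and hence of $\bar H^{L^\infty}$, after which boundedness of $T$ finishes the argument as you say. With that repair, the proof is complete and matches the route taken in the cited sources.
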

\subsection{Boundary representations of mapping class groups.}
We now consider quasi-regular representations of mapping class groups and state our main theorem. For more on mapping class groups and Teichm\"{u}ller theory, we refer to \cite{Farb2012}, \cite{ABEM} and \cite{Kerckhoff80}. \\

\noindent{\bf Mapping class groups and Teichm\"{u}ller spaces.}~~~ Let $S =S_{g}$ be a genus $g$, closed, connected, orientable surface. We always assume that $g \geq 2$. All arguments here work for hyperbolic surfaces with punctures as well. The {\it mapping class group} $\M(S)$ of $S$ is the group of isotopy classes of orientation-preserving homeomorphisms of $S$. Namely, if the group of orientation-preserving homeomorphisms of $S$ is denoted by ${\rm Homeo}^{+}(S)$ and the group of homeomorphisms of $S$ that isotopic to the identity is denoted by ${\rm Homeo}_{0}(S)$, then $$ \M(S) = {\rm Homeo}^{+}(S)/ {\rm Homeo}_{0}(S).$$

 We remark here that mapping class groups of surfaces are finitely presented and considered to be discrete groups. The {\it Teichm\"{u}ller space} $\T(S)$ of $S$ is the space of homotopy classes of hyperbolic structures. The Teichm\"{u}ller space $\T(S)$ is homeomorphic to $\mathbb{R}^{6g-6}$ and the mapping class group $\M(S)$ acts on $\T(S)$ by changing markings. The quotient $\mathcal{M}(S) = \T(S)/\M(S) $ is the {\it moduli space} of $S$. There are several distances on $\T(S)$ so that $\M(S)$ acts as isometries and the one that we will use is the {\it Teichm\"{u}ller distance} $d = d_{T}$. It is defined as follows: For $\mathcal{X}=[(X,\phi)],\mathcal{Y}=[(Y,\psi)] \in \T(S)$, $d(\mathcal{X},\mathcal{Y}) = \frac{1}{2}\log K_f$, where $f: X \longrightarrow Y$ is the Teichm\"{u}ller mapping, locally in the form of $x+iy \mapsto e^{t}x + ie^{-t}y$, in the isotopy class of $\psi\circ\phi^{-1}$, namely the quasi-conformal homeomorphism with minimal dilatation in the isotopy class of $\psi\circ\phi^{-1}$ and $K_f$ is the dilatation of $f$. It is obvious that $\M(S) \subset Isom(\T(S),d)$. Neither the Teichm\"{u}ller space $\T(S)$ nor $\mathcal{M}(S)$ is compact.\\
\noindent{\bf Measured foliations.}~~~ The Teichm\"{u}ller space can be compactified in several ways. The compactification we will use in this paper is the Teichm\"{u}ller compactification.  Fix a point $o \in \T(S)$ that is considered to be a Riemann surface $X$ via uniformization. A {\it holomorphic quadratic differential} $q \in {\rm H^{0}(X, \Omega_{X}^{\otimes 2})}$ on $X$ is locally of the form $q(z)dz^2$ such that $q(z)$ is a holomorphic function. Define a {\it norm} on $q$ by $$\|q\| = \int_{X}|q(z)|dxdy$$ and consider the unit open ball $B^{1}(X)$ with respect to $\| \cdot \|$. The set $QD(X)$ of holomorphic quadratic differentials is a vector space and can be identified with the cotangent space of $\T(S)$ at $o$. There is a homeomorphism $\pi : B^{1}(X) \longrightarrow \T(S)$ sending each open unit ray in $QD(X)$ starting at the origin to a Teichm\"{u}ller geodesic starting at $o$. The {\it Teichm\"{u}ller compactification} is then the visual compactification by adding ending points in the unit sphere of $QD(X)$ to each ray. The Teichm\"{u}ller compactification will be denoted by $\bar{\T(S)}$. Thus, the boundary $\partial \bar{\T(S)}$ of $\bar{\T(S)}$ is the unit sphere $QD^{1}(X)$.\\

One could give a geometric description of $\partial \bar{\T(S)}$ via projective measured foliations. A {\it measured foliation} on $S$ is a singular foliation of $S$ endowed with a transverse measure. The space $\MF(S)$ of measured foliations is then the set of equivalent classes of measured foliations where the equivalence is given by Whitehead moves and isotopy. The space $\MF(S)$, endowed with the weak topology on measures, is homeomorphic to $\mathbb{R}^{6g-6}$. The quotient, called {\it the projective measured foliation} space $\PMF(S)$ of $S$, of $\MF(S)$ by the nature action of $\mathbb{R}_{+}$ is homeomorphic to the $6g-7$ sphere $S^{6g-7}$. Both $\MF(S)$ and $\PMF(S)$ are equipped with a $\M(S)-$action. There is a deep relation between $\MF(S)$ and $QD(X)$. Namely, for each holomorphic quadratic differential $q$, the {\it vertical measured foliation} $\mathcal{V}(q)$ of $q = q(z)dz^2$ is the foliation given by the integral curves of the holomorphic tangent vector field on $S$ such that each vector has a value in negative real numbers under $q$, where the transverse measure is given by integration of $|Re\sqrt{q}|$. By a theorem of Hubbard-Masur, the map $\mathcal{V}$ that assigns each holomorphic quadratic differential $q$ on $X$ to $\mathcal{V}(q)$ is a homeomorphism from $QD(X)-\{0\}$ onto $\MF(S)$. The composition $\pi \circ \mathcal{V}$ of the map $\mathcal{V}: QD(X)-\{0\} \longrightarrow \MF(S)$ and the quotient map $\pi: \MF(S) \longrightarrow \PMF(S)$ gives the identification of $QD^{1}(X)$ with $\PMF(S)$. Thus, we will regard $\PMF(S)$ as the boundary of the Teichm\"{u}ller compactification of $\T(S)$. The equivalent class of $\xi \in \MF(S)$ in $\PMF(S)$ will be denoted by $[\xi]$. Any $q \in QD^1(X)$ (hence $[\mathcal{V}(q)] \in \PMF(S)$) determines a Teichm\"{u}ller geodesic ray $g_t$ starting from $o$, hence, by abuse of terminologies, we will call $q$ and $[\mathcal{V}(q)]$ the {\it direction} of $g_t$ and sometimes write $g_t$ as $g^q_t$ or $\mathcal{V}(q)(t)$.  \\

Any isotopy class $\gamma$  of essential simple closed curves on $S$ defines a (topological) foliation $\lambda(\gamma)$. Hence, any weighted isotopy class $c\gamma$ of essential simple closed curves defines a foliation $\lambda \in \MF(S)$. The measured foliation $\lambda$, as topological foliation, is the same as $\lambda(\gamma)$, but the transverse measure is given by $c$. Therefore, let $\mathcal{C}(S)$ denote the set of isotopy classes of essential simple closed curves, there is an embedding of $\mathcal{C}(S) \times \mathbb{R}_{+}$ into $\MF(S)$. The image is dense (\cite{Thurston1988}). This embedding enable us to define three functions that we will use. The first one is the intersection number on $\MF(S)$. The intersection number $i : \MF(S) \times \MF(S)\longrightarrow \mathbb{R}_{+}$ is the unique continuous function on $\MF(S) \times \MF(S)$ that extends the geometric intersection number of two essential simple closed curves and satisfies $i(c\lambda,\xi)=ci(\lambda,\xi)$ for every $c>0$ (see \cite[Corollary 1.11]{Rees}). The second one is the extremal length. Let $o = [(X,\phi)] \in \T(S)$ where $X$ is a Riemann surface. Let $\gamma$ be the isotopy class of an essential simple closed curve. The extremal length $\E_{X}(\gamma)$ of $\gamma$ in $X$ is defined to be $$\E_{X}(\gamma) = \sup_{\rho}\ell_{\rho}(\gamma)^2,$$ where $\rho$ runs over all metrics with unit area in the conformal class of $X$ and $\ell_{\rho}(\gamma)$ is the infimum of $\rho-$length of simple closed curves in $\gamma$. Then the extremal length $\E_{X}: \MF(S) \longrightarrow \mathbb{R}_{+}$ is the unique continuous function on $\MF(S)$ that extends the extremal length of $\mathcal{C}(S)$ and satisfies $\E_{X}(c\lambda) =c^2\E_{X}(\lambda)$ for $c \in \mathbb{R}_{+}$ (see \cite[Proposition 3]{Kerckhoff80}). Note that the extremal length in fact is defined on $\T(S) \times \MF(S)$, namely, if $[(X,\phi)] = [(Y,\psi)] \in \T(S)$, then $\E_{X}(\cdot) = \E_{Y}(\cdot)$. So we will write $\E_{o}(\cdot)$ rather than $\E_{X}(\cdot)$ for $o = [(X,\phi)]$. The third one is the hyperbolic length $\ell_{o}(\gamma)$ which is defined to be the $X-$length of unique $X-$hyperbolic geodesic $\tilde{\gamma}$ in the isotopy class $\gamma$. The function $\ell_{o}(\cdot)$ can be uniquely extended as well to $\MF(S)$ to obtain a continuous function $\ell_{o}$ on $\MF(S)$ \cite{Kerckhoff85}. We will use the following relation (see \cite{ABEM}): given a point $o$ in $\T(S)$, then there exists a constant $C = C(o)$, depending on $o$, such that $$ \forall \xi \in \MF(S), \frac{1}{C} \ell_o(\xi) \leq \sqrt{\E_{o}(\xi)} \leq C\ell_o(\xi).$$ \\

Recall that a measured foliation $\lambda$ is called {\it minimal} if it has no simple closed leaves. Two measured foliations are said to be {\it topologically equivalent} if they, as topological foliations, are differ by isotopies and Whitehead moves. A measured foliation $\xi$ is called {\it uniquely ergodic} if it is minimal and any measured foliation $\zeta$ that topologically equivalent to $\xi$ is measure equivalent to $\xi$, that is, $[\xi]=[\zeta]$. When $\xi$ is uniquely ergodic, we will call $[\xi]$ uniquely ergodic. It is well-known that the set of uniquely ergodic measured foliations has full measure with respect to the Thurston measure defined later. The following two lemmas are essential to our approach using intersection numbers.
\begin{lemma}[Theorem 1.12 in \cite{Rees}, \cite{Masur82}]\label{lemma:uniquelyergodic}
	Let $\lambda$ be a uniquely ergodic measured foliation and $\eta$ be any measured foliation. Then $i(\lambda,\eta) = 0$ if and only if $[\lambda] = [\eta]$.
\end{lemma}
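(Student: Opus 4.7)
The forward direction is a formal consequence of the bilinearity of the intersection pairing. If $[\lambda] = [\eta]$ in $\PMF(S)$, then $\eta = c\lambda$ in $\MF(S)$ for some $c > 0$, so $i(\lambda, \eta) = c\, i(\lambda, \lambda)$. Since $i$ is the continuous, bihomogeneous extension of the geometric intersection number of simple closed curves, one has $i(\mu,\mu)=0$ for every $\mu \in \MF(S)$ (either approximate by weighted simple closed curves, each of which has zero self-intersection, or put $\mu$ in good position against a parallel copy of itself). Hence $i(\lambda, \eta) = 0$.

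The substantive content is the converse, and my plan is to combine the geometric interpretation of vanishing intersection with minimality of $\lambda$. The key input is the standard fact (going back to Fathi--Laudenbach--Poenaru) that $i(\lambda, \eta) = 0$ is equivalent to the existence of representatives of $\lambda$ and $\eta$ in ``good position'' with no transverse crossings. Fix such representatives. Minimality of $\lambda$, which is part of the definition of unique ergodicity, ensures that every non-singular half-leaf of $\lambda$ is dense in $S$; in particular $\lambda$ has no closed leaves and no $\lambda$-invariant proper subsurface. If some leaf of $\eta$ were not carried by the leaf structure of $\lambda$, it would transversally cross a leaf of $\lambda$, and the density of leaves of $\lambda$ would then force infinitely many transverse crossings, contradicting zero intersection number. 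Similarly, the support of $\eta$ cannot be a proper subsurface of $S$, since any boundary component of such a subsurface would be a simple closed curve disjoint from $\lambda$, hence a closed leaf of $\lambda$, which is ruled out by minimality. Therefore, away from the finite singular set, the leaves of $\eta$ are leaves of $\lambda$, and the two foliations share the same underlying topological foliation up to Whitehead moves and isotopy.

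At this point, the transverse measure of $\eta$ becomes a transverse invariant measure on the topological foliation underlying $\lambda$. By the definition of unique ergodicity, any such measure is a positive scalar multiple of the transverse measure of $\lambda$, so $\eta = c\lambda$ in $\MF(S)$ for some $c > 0$, which gives $[\eta] = [\lambda]$ in $\PMF(S)$. The main obstacle in the proof is the middle step: rigorously passing from $i(\lambda, \eta) = 0$ to topological equivalence of the two underlying foliations. This is where the ``good position'' realization theorem and the density argument using minimality do the real work; once topological equivalence is established, the conclusion is essentially tautological from the definition of unique ergodicity.
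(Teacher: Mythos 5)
The paper does not prove this lemma itself; it cites it to Rees (Theorem 1.12) and Masur, so there is no in-paper argument to compare against. Your sketch does follow the standard route those references take: (i) the forward direction from $i(\mu,\mu)=0$ and bihomogeneity, (ii) pass from $i(\lambda,\eta)=0$ to a ``good position'' realization with no transverse crossings, (iii) use minimality of $\lambda$ to force topological equivalence, (iv) invoke unique ergodicity to upgrade topological to projective-measure equivalence. Steps (i) and (iv) are solid, and you correctly identify (ii)--(iii) as the substantive content.

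Two places in the middle step are logically muddled as written. First, once you fix good-position representatives with \emph{no} transverse crossings, there are no crossings to count, so the sentence that derives ``infinitely many transverse crossings, contradicting zero intersection number'' argues against a hypothesis you have already excluded; what you actually want is that, in good position, any $\eta$-leaf not contained in a $\lambda$-leaf would have to cross a $\lambda$-leaf by density, contradicting the \emph{choice} of good-position representatives, not the vanishing of $i$. Second, the inference ``a boundary curve disjoint from $\lambda$ is hence a closed leaf of $\lambda$'' is not immediate: a curve $\gamma$ with $i(\gamma,\lambda)=0$ need not be a leaf; the correct reasoning is that minimality on a closed surface forces $\lambda$ to fill, so $\gamma$ can neither be disjoint from the (full) support of $\lambda$ nor cross it transversely, and the only remaining option is for $\gamma$ to be a closed leaf, which minimality forbids. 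With those two points tightened, and with an honest citation for the good-position realization (this is where the real work in Rees's argument lives), the proof is essentially the standard one.
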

\begin{lemma}[Masur's criterion \cite{Masur92}]\label{lemma:masurcriterion}
	Given $\epsilon > 0$. If a Teichm\"{u}ller geodesic ray $g_t$ starting from $o$ does not leave $\T_{\epsilon}(S)$ eventually, then the direction of $g_t$ is uniquely ergodic.
\end{lemma}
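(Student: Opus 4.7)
The plan is to argue by contrapositive: if the vertical foliation $\mathcal{V}(q)$ of the ray's direction is not uniquely ergodic, I want to exhibit times $t_n \to \infty$ and isotopy classes of simple closed curves $\alpha_n$ with $\ell_{g_{t_n}}(\alpha_n) \to 0$, equivalently $\E_{g_{t_n}}(\alpha_n) \to 0$ via the comparison $\sqrt{\E_o(\cdot)} \asymp \ell_o(\cdot)$ stated above. Exhibiting such curves forces $g_t$ to leave every thick part $\T_\epsilon(S)$ eventually, which is the contrapositive of the claim.

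The starting point is to invoke the ergodic decomposition: non-unique ergodicity of $\mathcal{V}(q)$ yields two mutually singular ergodic transverse measures $\mu_1, \mu_2 \in \MF(S)$ on the same underlying topological foliation, with $[\mu_1] \neq [\mu_2]$ and hence $i(\mu_1, \mu_2) = 0$ by Lemma \ref{lemma:uniquelyergodic}. After normalization, $\mathcal{V}(q)$ splits as $a_1\mu_1 + a_2\mu_2 + \cdots$ with $a_j > 0$, and $i(\mu_j, \mathcal{H}(q)) > 0$ because each component contributes to the $q$-area.

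The heart of the argument is a flat-geometry construction on the quadratic differential $g_t q$, which as $t$ grows has vertical foliation $e^{-t}\mathcal{V}(q)$ and horizontal foliation $e^{t}\mathcal{H}(q)$. Using the mutual singularity of $\mu_1$ and $\mu_2$, I would find, at appropriately chosen times $t_n$, pairs of horizontal trajectories that are close in the flat $q_t$-metric yet belong to distinct ergodic components; connecting them by short vertical segments produces simple closed curves $\alpha_n$ of small $q_t$-length. The standard extremal length bound $\E_{g_{t_n}}(\alpha_n) \lesssim \ell_{g_{t_n} q}(\alpha_n)^2 / \mathrm{area}(q)$ then delivers the desired vanishing of extremal length.

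The main obstacle is producing these short curves quantitatively: one needs to convert the qualitative non-unique ergodicity into a concrete estimate for simple closed curves at specific forward times. Masur's approach in \cite{Masur92} uses Rauzy--Veech induction on the interval exchange transformation obtained as the first-return map to a horizontal transversal; non-unique ergodicity forces this induction to produce Rokhlin towers of vastly imbalanced heights, from which one reads off short saddle connections and hence short essential curves on the surface at the corresponding times $t_n$. Alternatively one can argue directly via a pigeonhole on horizontal cylinders in the flat $q$-structure. Either way, this quantitative step is the technical core, and it is where the non-compactness of moduli space and the existence of arbitrarily thin parts of $\T(S)$ enter essentially.
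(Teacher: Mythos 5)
The paper does not prove this lemma; it cites Masur's original paper \cite{Masur92} and uses it as a black box. Your sketch does trace the standard outline of Masur's argument: take the contrapositive, invoke the ergodic decomposition of a non-uniquely-ergodic vertical foliation into mutually singular ergodic transverse measures, and then use flat geometry (or equivalently Rauzy--Veech induction on the first-return interval exchange) to manufacture, at times $t_n \to \infty$, essential simple closed curves $\alpha_n$ whose $g_{t_n}q$-flat length, hence extremal length, goes to zero. That is indeed the shape of the proof.

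Two points are worth flagging. First, you derive $i(\mu_1,\mu_2)=0$ from $[\mu_1]\neq[\mu_2]$ by citing Lemma \ref{lemma:uniquelyergodic}, but that lemma runs the other way: when $\lambda$ is uniquely ergodic, $i(\lambda,\eta)=0$ holds if and only if $[\lambda]=[\eta]$, so applying it naively with $\lambda=\mu_1$, $\eta=\mu_2$ would give $i(\mu_1,\mu_2)>0$, the opposite of what you want. Moreover the hypothesis fails: the ergodic components $\mu_j$ of a non-uniquely-ergodic foliation are, essentially by definition, not uniquely ergodic as measured foliations, since the shared topological foliation carries several transverse measures. The correct and more elementary reason that $i(\mu_1,\mu_2)=0$ is simply that $\mu_1$ and $\mu_2$ are transverse measures on the same underlying topological foliation, so their intersection pairing vanishes. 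Second, your decomposition step implicitly assumes $\mathcal{V}(q)$ is minimal; if it is not, there is a simple closed leaf or a reducing curve $\gamma$ with $i(\gamma,\mathcal{V}(q))=0$, and one checks directly that $\E_{g_t}(\gamma)\to 0$, which is the easy case. Neither issue breaks your outline, but the citation of Lemma \ref{lemma:uniquelyergodic} is misplaced, and as you yourself note, the genuine technical content — converting non-unique ergodicity into a quantitative estimate $\E_{g_{t_n}}(\alpha_n)\to 0$ — lives in \cite{Masur92} and is not reproduced in the present paper.
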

One feature of the Teichm\"{u}ller compactification is that the action of $\M(S)$ cannot be extended continuously to $\bar{\T(S)}$ \cite{Kerckhoff80}. However, uniquely ergodic measured foliations are nice points in terms of $\M(S)-$action in the following sense.
\begin{lemma}[\cite{Masur82}]\label{smoothness}
	The mapping class group acts continuously on $\bar{\T(S)}$ at uniquely ergodic points on the boundary.
\end{lemma}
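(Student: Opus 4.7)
The plan is to reduce the statement to the fact, established in \cite{Masur82}, that the Teichm\"{u}ller and Thurston compactifications of $\T(S)$ coincide at uniquely ergodic boundary points, together with the elementary observation that the mapping class group acts continuously on the Thurston compactification via the length/intersection pairing.

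More concretely, let $[\xi] \in \PMF(S)$ be uniquely ergodic, fix $\phi \in \M(S)$, and suppose $x_n \in \bar{\T(S)}$ converges to $[\xi]$. First I would dispose of the case where $x_n$ lies eventually in $\PMF(S)$: there the convergence is continuity of $\phi$ acting on $\PMF(S)$, which is already known. So I may assume $x_n \in \T(S)$. Next, I would note that unique ergodicity is preserved by the $\M(S)$-action, since a representative of $\phi$ is a homeomorphism of $S$ that intertwines topological equivalence and measure equivalence of foliations; hence $\phi \cdot [\xi]$ is uniquely ergodic as well.

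The core step invokes Masur's characterization: convergence $x_n \to [\xi]$ in $\bar{\T(S)}$ with $[\xi]$ uniquely ergodic is equivalent to convergence $x_n \to [\xi]$ in the Thurston compactification, in the sense that there exist scalars $t_n \to \infty$ with
\[
\frac{\ell_{x_n}(\alpha)}{t_n} \longrightarrow i(\xi,\alpha) \qquad \text{for every } \alpha \in \MF(S).
\]
Combining this with the equivariance identities $\ell_{\phi x_n}(\alpha) = \ell_{x_n}(\phi^{-1}\alpha)$ and $i(\phi\cdot\xi,\alpha) = i(\xi,\phi^{-1}\alpha)$, the same scalars $t_n$ give
\[
\frac{\ell_{\phi x_n}(\alpha)}{t_n} \longrightarrow i(\phi\cdot\xi,\alpha),
\]
so $\phi x_n$ converges to $[\phi\cdot\xi]$ in the Thurston compactification. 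Applying Masur's theorem in the reverse direction, using unique ergodicity of $\phi\cdot\xi$, promotes this to convergence $\phi x_n \to \phi\cdot[\xi]$ in $\bar{\T(S)}$.

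The hard part of the argument is not the transport step, which is formal, but the equivalence between Teichm\"{u}ller and Thurston convergence at uniquely ergodic points. This is exactly the content extracted from \cite{Masur82}, and it rests on Masur's analysis of Teichm\"{u}ller geodesics whose vertical foliation is uniquely ergodic: along such rays the extremal length functional on $\MF(S)$ degenerates projectively onto $i(\xi,\cdot)$, which pins down the limit in both compactifications. Since the paper cites Masur directly, I would simply reference that theorem rather than reprove it, and present the proof of Lemma~\ref{smoothness} as the short equivariance computation above.
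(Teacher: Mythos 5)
The paper does not prove Lemma~\ref{smoothness}; it is stated as a direct citation to \cite{Masur82}, so there is no in-paper argument to compare against. Your sketch is a correct and standard way to derive the lemma from Masur's analysis: pass through the Thurston compactification, where the $\M(S)$-action is manifestly continuous via the length/intersection pairing, and use that the two compactifications agree precisely at uniquely ergodic points. The equivariance identities you use, and the observation that unique ergodicity is an $\M(S)$-invariant property, are both correct, and the reduction to the interior case $x_n\in\T(S)$ is harmless since the boundary-to-boundary case is just continuity on $\PMF(S)$.

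The one place you should be careful is the exact form in which you invoke Masur. What \cite{Masur82} literally proves is a statement about Teichm\"uller geodesic \emph{rays}: if $q$ has uniquely ergodic vertical foliation $[\xi]$, then $g^q_t\to[\xi]$ in the Thurston compactification. Your argument uses the stronger bidirectional statement for arbitrary sequences $x_n\in\T(S)$ (Teichm\"uller convergence to a u.e.\ point $\iff$ Thurston convergence to it), and in particular the ``Thurston $\Rightarrow$ Teichm\"uller'' direction is applied to conclude $\phi x_n\to\phi\cdot[\xi]$. This upgraded statement is true and is the content one extracts from Masur's work, but it is not quite a verbatim quotation of a single theorem there: one either adapts his estimates to a convergent fan of directions, or argues by compactness of $\bar{\T(S)}$ and uses $i(\cdot,\phi\xi)=0$ together with Lemma~\ref{lemma:uniquelyergodic} to identify subsequential Teichm\"uller limits with $\phi\cdot[\xi]$. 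Flagging that this extension is being used (rather than stating the equivalence as if it were a theorem numbered in Masur's paper) would make the write-up airtight. An alternative route, which avoids the Thurston compactification, is to factor $\phi:\bar{\T(S)}_o\to\bar{\T(S)}_o$ as the always-continuous isometry $\bar{\T(S)}_o\to\bar{\T(S)}_{\phi\cdot o}$ followed by the basepoint-change map $\bar{\T(S)}_{\phi\cdot o}\to\bar{\T(S)}_o$, and invoke Masur's result that basepoint change is continuous at uniquely ergodic boundary points; this rests on the same analytic core but stays entirely inside the Teichm\"uller picture.
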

The following formula proved by Kerckhoff concerning the calculation of Teichm\"{u}ller distances will be used frequently.
\begin{lemma}[\cite{Kerckhoff80}, Kerckhoff's formula]\label{lemma:Kerckhoffformula}
	$$\forall x,y \in \T(S), d_T(x,y) = \frac{1}{2}\sup_{[\xi] \in \PMF(S)} \ln \left(\frac{\E_{x}(\xi)}{\E_{y}(\xi)}\right).$$
\end{lemma}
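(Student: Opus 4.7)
The plan is to prove Kerckhoff's formula as a pair of matching inequalities, using Teichm\"{u}ller's theorem together with the quasi-invariance of extremal length under quasiconformal maps.

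For the upper bound I take the Teichm\"{u}ller mapping $f\colon X\to Y$ in the isotopy class of $\psi\circ\phi^{-1}$, whose dilatation equals $K_f=e^{2d_T(x,y)}$ by the very definition of the Teichm\"{u}ller distance. The classical quasi-invariance of extremal length gives, for any $K$-quasiconformal map, $K^{-1}\E_x(\xi)\leq \E_y(f_*\xi)\leq K\,\E_x(\xi)$ for every $\xi\in\MF(S)$. Since $f$ implements the marking change, $f_*\xi$ represents the same class on $Y$ as $\xi$ on $X$ under the standard identification of $\MF(S)$, so $\E_x(\xi)/\E_y(\xi)\leq K_f$. The ratio is invariant under $\xi\mapsto c\xi$ by the quadratic homogeneity of extremal length, hence descends to a function on $\PMF(S)$, and taking $\tfrac{1}{2}\ln$ and the supremum over $[\xi]\in\PMF(S)$ yields $\tfrac{1}{2}\sup_{[\xi]}\ln\!\bigl(\E_x(\xi)/\E_y(\xi)\bigr)\leq d_T(x,y)$.

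For the lower bound I exhibit a foliation that saturates the inequality. The natural candidate is the vertical foliation $\xi_v$ of the initial Teichm\"{u}ller differential $q$ on $X$ associated to $f$. In the natural $q$-coordinates $z=u+iv$ off the zero set, $f$ is the affine stretch $(u,v)\mapsto (K_f^{1/2}u,K_f^{-1/2}v)$, so the leaves $u=\mathrm{const}$ are preserved while the transverse measure $|du|$ is pushed forward to $K_f^{-1/2}|du'|$ on $Y$ in the natural coordinates of the terminal differential $q'$. Combining this with the identity $\E_X(\xi_v(q))=\|q\|_X$, which says that the extremal length of the vertical foliation of a holomorphic quadratic differential equals its area norm, together with the fact that $f$ preserves area, one computes $\E_y(\xi_v)=K_f^{-1}\|q\|_X=K_f^{-1}\E_x(\xi_v)$, and hence $\E_x(\xi_v)/\E_y(\xi_v)=K_f=e^{2d_T(x,y)}$, giving the reverse inequality.

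The main obstacle is the identity $\E_X(\xi_v(q))=\|q\|_X$, which is not a general quasiconformal estimate but a core fact of Teichm\"{u}ller theory: it is proved by a length-area argument using the flat $|q|$-metric, together with the verification that $|q|^{1/2}|dz|/\|q\|^{1/2}$ is an extremal conformal metric via Cauchy--Schwarz. When $q$ is Jenkins--Strebel the identity reduces to elementary modulus computations on flat cylinders; for general $q$ one either invokes the density of weighted multicurves in $\MF(S)$ and the continuity of extremal length, or performs the length-area argument directly on the singular flat surface, paying attention to the zeros of $q$ where the foliation is singular.
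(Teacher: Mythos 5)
The paper does not prove this lemma; it cites it directly to Kerckhoff's 1980 paper, so there is no internal proof to compare against. Judged on its own merits, your argument is correct and is essentially the canonical proof from Teichm\"{u}ller theory (and, as far as I recall, the one Kerckhoff himself gives): the upper bound via the quasi-invariance inequality $K^{-1}\lambda_X(\Gamma)\leq\lambda_Y(f\Gamma)\leq K\lambda_X(\Gamma)$ for a $K$-quasiconformal map applied to the Teichm\"{u}ller mapping, extended from multicurves to all of $\MF(S)$ by continuity and homogeneity, and the matching lower bound by checking that the vertical foliation of the initial Teichm\"{u}ller differential realizes the supremum. Your bookkeeping is right: the affine stretch in natural coordinates sends $|du|$ to $K_f^{-1/2}|du'|$, preserves $|q|$-area so $\|q'\|=\|q\|$, and combined with the Gardiner--Masur identity $\E_X(\mathcal{V}(q))=\|q\|$ this gives the exact ratio $K_f$. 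The one place you should be slightly more careful is in the phrase ``$f_*\xi$ represents the same class on $Y$ as $\xi$ on $X$'': this is where the markings $\phi,\psi$ actually enter (one needs $\E_y(\xi)=\E_Y(f_*\phi_*\xi)$ because $f\simeq\psi\circ\phi^{-1}$), but the reasoning is sound once that is unwound. Your closing remarks on how one would establish $\E_X(\mathcal{V}(q))=\|q\|$ are accurate and show you understand where the real content lies.
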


\noindent{\bf Hyperbolicity.}~~~ It was first proved in \cite{MasurWolf} that the Teichm\"{u}ller space $(\T(S),d_T)$ is not hyperbolic in the sense of Gromov. However some triangles in $(\T(S),d_T)$ are indeed thin. We now collect several related results in order to compare neighborhoods in $\PMF(S)$ defined by projections of balls in $\T(S)$ and the ones defined by intersection numbers.\\

Recall that, for $\epsilon > 0$, the {\it $\epsilon-$thick part} $\T_{\epsilon}(S)$ of the Teichm\"{u}ller space $\T(S)$ is defined to be $$\T_{\epsilon}(S) = \{y\in \T(S): \forall c \in \mathcal{C}(S),\E_{y}(c) \geq \epsilon\}.$$
The first result, generalizing a theorem of Rafi \cite{Rafi14}, describes when triangles are thin. We denote $ \mathcal{N}_D(A)$ for a subset $A$ of $\T(S)$ by the $D-$neighborhood of $A$. Recall that {\it a geodesic segment $I:[a,b] \rightarrow \T(S)$ has at least proportion $\theta$ in $\T_{\epsilon}(S)$} if $$Thk^{\%}_{\epsilon}[I]\doteq \frac{|\{a \leq s \leq b: I(s) \in \T_{\epsilon}(S)\}|}{b-a} \geq \theta.$$ 
\begin{theorem}[\cite{DDM}]\label{theorem:DDM}
	Given $\epsilon > 0$ and $0 < \theta \leq 1 $, there exist constants $D = D(\epsilon,\theta), L_0=L_0(\epsilon,\theta)$ such that if $I \subset [x,y]$ is a geodesic subinterval in $\T(S)$ of length at least $L_0$	and at least proportion $\theta$ of $I$ is in $\T_{\epsilon}(S)$, then for every $z \in \T(S)$, we have $$ I \cap \mathcal{N}_{D}([x,z]  \cup [y,z]) \ne \emptyset.$$
\end{theorem}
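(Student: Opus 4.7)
The strategy is to reduce the statement to Rafi's thin-triangles theorem for geodesic subsegments that lie wholly inside the thick part, using the density hypothesis to extract a long contiguous thick subsegment. Rafi's theorem (which can be viewed as a consequence of the $\delta$-hyperbolicity of the curve complex $\mathcal{C}(S)$ together with the Masur--Minsky subsurface projection $\pi_{\mathcal{C}}:\T(S)\to \mathcal{C}(S)$, on which thick geodesic subsegments map to reparametrised quasigeodesics) provides constants $L_1 = L_1(\epsilon)$ and $D_1 = D_1(\epsilon)$ with the following property: for any $x,y,z \in \T(S)$, if $J \subset [x,y]$ has length at least $L_1$ and $J \subset \T_{\epsilon}(S)$, then $J \subset \mathcal{N}_{D_1}([x,z] \cup [y,z])$.

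Given $I \subset [x,y]$ with $|I| \geq L_0$ and $Thk^{\%}_{\epsilon}[I] \geq \theta$, the first task is to locate a contiguous subsegment $J \subset I$ of length at least $L_1$ lying entirely in $\T_{\epsilon'}(S)$ for some $\epsilon' = \epsilon'(\epsilon)$. I would decompose $I$ into its maximal $\epsilon$-thick and $\epsilon$-thin subintervals and invoke Rafi's thick--thin decomposition of a Teichm\"{u}ller geodesic, which controls the alternation structure: thin excursions correspond to Dehn-twist-type motions in a single subsurface, and any two consecutive thin excursions must be separated by a definite amount of thick progress. Combining this with an averaging argument over the thick subintervals, whose total length is at least $\theta |I| \geq \theta L_0$, and choosing $L_0 = L_0(\epsilon,\theta)$ sufficiently large relative to $L_1(\epsilon)/\theta$, one can guarantee at least one thick subinterval of length $\geq L_1$. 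Applying Rafi's thin-triangles theorem to $J$ with the third vertex $z$ then produces a point of $J \subset I$ within distance $D_1$ of $[x,z] \cup [y,z]$, giving the conclusion with $D = D_1(\epsilon')$.

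\textbf{Main obstacle.} The principal difficulty is the extraction of a long contiguous thick subinterval from just a density assumption. A priori a density-$\theta$ subset of $I$ can be fragmented into arbitrarily many short pieces, so one must invoke nontrivial input from the geometry of Teichm\"{u}ller geodesics, specifically Rafi's structural description of the lengths of thin excursions and of the buffer of thick progress between them. Once this combinatorial control of thick--thin alternation is in place, the remainder of the argument is essentially the transfer of $\delta$-thin triangles in $\mathcal{C}(S)$ back to $\T(S)$ along a thick subsegment on which $\pi_{\mathcal{C}}$ is coarsely bi-Lipschitz.
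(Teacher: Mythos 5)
The paper does not prove this theorem; it is quoted from Dowdall--Duchin--Masur (\cite{DDM}) and used as a black box. So there is no in-paper proof to compare against; what follows assesses your sketch on its own terms.

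Your reduction to Rafi's contiguous thin-triangle theorem is a sensible high-level plan, and you correctly single out the key difficulty: upgrading the density hypothesis $Thk^{\%}_{\epsilon}[I]\geq\theta$ to a long \emph{contiguous} thick subinterval. But the justification you offer does not hold. You claim Rafi's thick--thin decomposition supplies ``a definite amount of thick progress'' between consecutive thin excursions; this is not the case. If $\alpha_1,\alpha_2$ are disjoint curves, the geodesic can exit the $\epsilon$-thin region of $\alpha_1$ at time $b$ while $\alpha_2$ already has extremal length $\epsilon+\delta$ at time $b$, and then enter the $\epsilon$-thin region of $\alpha_2$ roughly $\frac{1}{2}\log(1+\delta/\epsilon)$ later, which tends to $0$ as $\delta\to 0$. (Only when $\alpha_1$ and $\alpha_2$ intersect does the collar lemma force $\E_{g_b}(\alpha_2)\gtrsim 1/\epsilon$ and hence a thick gap of length $\gtrsim\log(1/\epsilon)$.) Thus the maximal $\epsilon$-thick subintervals of $I$ can be arbitrarily short, and the density hypothesis alone gives no upper bound on the number of such pieces, so your pigeonhole step fails without additional input which you do not supply. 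Obtaining such a bound is the genuine content here: roughly, one passes to a much smaller threshold $\epsilon'=\epsilon'(\epsilon,\theta)$, uses that each $\epsilon'$-thin excursion sits inside an $\epsilon$-thin excursion extended by at least $\frac{1}{2}\log(\epsilon/\epsilon')$ on either side of the deepest point to control the count of $\epsilon'$-thin excursions by $(1-\theta)|I|/\log(\epsilon/\epsilon')$, and then checks by pigeonhole that the resulting contiguous $\epsilon'$-thick piece beats Rafi's threshold $L_1(\epsilon')$. None of this is present in your sketch, so as written there is a real gap at precisely the step you identified.
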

The following result will also be used later. Recall that two parametrized geodesics segment $\delta(t)$ and $\delta'(t)$ defined on $[a,b]$ are said to {\it $P-$fellow travel} in a parametrized fashion if, for every $t\in [a,b]$, $d_T(\delta(t),\delta'(t)) \leq P$.
\begin{theorem}[\cite{Rafi14}]\label{theorem:fellowtravelling}
	Let $\epsilon > 0$ and $R > 0$. Then there exists $P =P(\epsilon, R) > 0$ such that whenever $x_1,x_2,y_1,y_2$ are in $\T_{\epsilon}(S)$ with $$d_T(x_1,x_2) \leq R,d_T(y_1,y_2) \leq R,$$ the geodesic segment $[x_1,y_1]$ and $[x_2,y_2]$ are $P-$fellow travelling. 
\end{theorem}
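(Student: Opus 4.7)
The plan is to deduce parametrized fellow-travelling by first establishing unparametrized Hausdorff closeness of $[x_1,y_1]$ and $[x_2,y_2]$ using the thin-triangle statement of Theorem \ref{theorem:DDM}, and then upgrading to parametrized closeness by exploiting uniform progress of Teichm\"{u}ller geodesics in the thick part.

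\textbf{Step 1 (geodesics remain relatively thick).} Although the interiors of $[x_i,y_i]$ may dip into the thin part, I would first check that a uniform proportion of each segment lies in $\T_{\epsilon_0}(S)$ for some $\epsilon_0=\epsilon_0(\epsilon,R)$. This follows from Wolpert's lemma (the logarithm of extremal length is $2$-Lipschitz along Teichm\"{u}ller geodesics) together with Kerckhoff's formula (Lemma \ref{lemma:Kerckhoffformula}): the endpoints being $\epsilon$-thick and the pairs $(x_1,x_2)$, $(y_1,y_2)$ being $R$-close force definite initial and terminal sub-arcs of $[x_1,y_1]$ and $[x_2,y_2]$ to lie in $\T_{\epsilon_0}(S)$, while thin-excursion control shows the proportion in $\T_{\epsilon_0}(S)$ is bounded below.

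\textbf{Step 2 (unparametrized closeness).} Split the geodesic quadrilateral with vertices $x_1, y_1, y_2, x_2$ into two triangles via the diagonal $[x_1, y_2]$. Applying Theorem \ref{theorem:DDM} with threshold $\theta = 1/2$ to any subsegment $I \subset [x_1, y_1]$ of length at least $L_0(\epsilon_0,1/2)$ whose proportion in $\T_{\epsilon_0}(S)$ exceeds $1/2$, one obtains that $I$ is within $D(\epsilon_0,1/2)$ of $[x_1,y_2] \cup [y_1,y_2]$, hence within $D+R$ of $[x_1, y_2]$. Repeating for $[x_1,y_2]$ inside the triangle $x_1, y_2, x_2$ yields the same conclusion with $[x_2, y_2]$ in place of $[x_1, y_2]$. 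Handling the short extremal sub-intervals (of length $\leq L_0$) separately by the triangle inequality produces an unparametrized Hausdorff bound $D'=D'(\epsilon,R)$ between $[x_1, y_1]$ and $[x_2, y_2]$.

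\textbf{Step 3 (parametrization).} This is the main obstacle, since Teichm\"{u}ller space is not Gromov-hyperbolic and unparametrized closeness does not in general imply parametrized closeness. The plan is to use that along a Teichm\"{u}ller geodesic whose image stays in $\T_{\epsilon_0}(S)$, extremal lengths of curves change at a uniform rate (Wolpert), so Kerckhoff's formula implies that the nearest-point projection $\pi : [x_1, y_1] \to [x_2, y_2]$ is uniformly bi-Lipschitz in the parameter on the sub-interval where both geodesics lie in $\T_{\epsilon_0}(S)$. Anchoring the projection at the endpoints---$\pi(x_1)$ lies within $R$ of $x_2$ and $\pi(y_1)$ within $R$ of $y_2$---and interpolating through the uniformly thick middle portion bounds the parameter shift by a constant $P_0 = P_0(\epsilon,R)$. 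Combining with Step 2 yields $d_T(\delta(t),\delta'(t)) \leq D' + P_0 =: P$ for every $t$, which is the desired parametrized fellow-travelling. The most delicate point is handling the transitions in and out of thin excursions in Step 3, where one must use the product-region structure to control how parameter shifts can accumulate.
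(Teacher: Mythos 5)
This statement is cited from Rafi's paper \cite{Rafi14} and the present paper does not prove it, so there is no internal proof to compare against; I can only assess the proposal on its own merits.

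The proposal has a fatal gap in Step 1. You claim that the hypotheses force a uniform proportion of $[x_1,y_1]$ (and $[x_2,y_2]$) to lie in some $\T_{\epsilon_0}(S)$ with $\epsilon_0 = \epsilon_0(\epsilon,R)$. This is false. A Teichm\"{u}ller geodesic with both endpoints in $\T_{\epsilon}(S)$ can spend an arbitrarily large fraction of its length in the thin part. The standard example: fix $x \in \T_{\epsilon}(S)$, take a simple closed curve $\alpha$, and set $y = T_{\alpha}^{N}\cdot x$ for a large Dehn-twist power $N$. Then $y$ is also $\epsilon$-thick, $d_T(x,y) \approx \log N$, and the geodesic $[x,y]$ spends roughly $\log N - O(1)$ of its length in $\{ \E(\alpha) < \epsilon_0 \}$. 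So for any fixed $\epsilon_0$ the proportion of $[x,y]$ in $\T_{\epsilon_0}(S)$ tends to $0$ as $N \to \infty$. The $2$-Lipschitz control of $\log \E_{\cdot}(\xi)$ from Kerckhoff's formula only bounds the rate of entry into the thin part; it does not prevent long thin excursions, and hence does not give the proportion you need. Consequently, in Step 2 there need not exist any subsegment of length $\geq L_0$ that is $\theta$-proportion thick, so Theorem \ref{theorem:DDM} simply cannot be invoked, and the unparametrized closeness is never established.

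Step 3 has a further, independent problem: the assertion that nearest-point projection between two Teichm\"{u}ller geodesics is ``uniformly bi-Lipschitz in the parameter'' on thick sub-intervals is not a consequence of Wolpert/Kerckhoff alone, and in any case you cannot reach it here because you do not have the unparametrized closeness from Step 2. More to the point, the whole plan is built on treating $\T(S)$ as if thin-triangle behavior held along the geodesics you are comparing, whereas Rafi's theorem is precisely about segments whose interiors may be badly non-thick. Rafi's actual argument goes through the Masur--Minsky distance formula and his description of Teichm\"{u}ller geodesics via subsurface projections: the combinatorial data of $(x_1,y_1)$ and $(x_2,y_2)$ agree up to bounded error because the endpoints are thick and $R$-close, and this forces the two geodesics to track each other even through their thin excursions. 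That mechanism is of a fundamentally different nature than the DDM-based strategy you propose, and cannot be substituted by it.
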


\noindent{\bf Boundary representations of mapping class groups.}~~~ We are in a position to discuss a special type of quasi-regular unitary representations of mapping class groups. Fix $o \in \T(S)$, we first define a Radon measure $\nu_{o}$ on $\PMF(S)$. Let $\nu_{Th}$ be the Thurston measure on $\MF(S)$. For any open subset $U \subset \PMF(S),$ one defines $\nu_{o}(U)$ to be $$\nu_{o}(U) = \nu_{Th}\left(\{\xi: [\xi] \in U, \E_{o}(\xi) \leq 1\}\right).$$ One could verify that $\forall \gamma \in \M(S), \gamma_{*}\nu_{o}=\nu_{\gamma.o}$ and $[\nu_{x}] = [\nu_{y}],\forall x,y \in \T(S)$. Therefore, one has $$\forall x,y \in \T(S), [\xi]\in \PMF(S), \frac{d\nu_{x}}{d\nu_{y}}([\xi])= \left(\frac{\E_{y}(\xi)}{\E_{x}(\xi)}\right)^{\frac{6g-6}{2}}.$$ By the definition of extremal length, the function $[\xi] \mapsto \left(\frac{\E_{y}(\xi)}{\E_{x}(\xi)}\right)^{\frac{6g-6}{2}}$ is well-defined on $\PMF(S)$. We have, in particular, $$\forall \gamma \in \M(S), [\xi]\in \PMF(S), \frac{d\gamma_{*}\nu_{o}}{d\nu_{o}}([\xi])= \left(\frac{\E_{o}(\xi)}{\E_{\gamma.o}(\xi)}\right)^{\frac{6g-6}{2}}.$$ Hence one has a quasi-regular unitary representation $\pi_{\nu_{o}}$ of $\M(S)$ on the Hilbert space $L^{2}(\PMF(S),\nu_{o})$. The quasi-regular representation $\pi_{\nu_{o}}$ of $\M(S)$ is called the {\it boundary representation} of $\M(S)$ (with respect to $o$).\\

As intersection numbers will be the main tool, we embed $\PMF(S)$ into $\MF(S)$. For each $[\xi]$, define $\tau(\xi) \in \MF(S)$ to be the unique element in $[\xi]$ such that $\E_{o}\left(\tau(\xi)\right) = 1.$ Hence, the map $\tau : \PMF(S) \longrightarrow \MF(S)$ is a section of the projection $\pi : \MF(S) \longrightarrow \PMF(S).$  When talking about intersection numbers for two points in $\PMF(S)$, except in Section \ref{sec:regularitypants}, we will use the image of $\tau$.\\
\noindent {\bf Ergodic boundary representation.}~~~ From now on, let $S =S_{g} (g \geq 2)$ be a genus $g$ closed, orientable surface and fix a point $o=[(X,\phi)]  \in \T(S)$. Normalize $\nu_o$ to be a probability measure. Denote $h = 6g - 6$ and let $\epsilon > 0$ and $\theta > 0$. Let $L$ be the length function on $G$ induced by the Teichm\"{u}ller distance $d_T$, namely $L(g) = d_T(o, g\cdot o)$.\\

Inspired by \cite{gekhtman2013stable} and \cite{DDM}, we first describe our choice of $E_n$ that fits in Theorem \ref{criterion2}. Let $g^q_t$ be a Teichm\"{u}ller geodesic ray starting from $o$ in the direction of $q \in QD^1(X)$. For every $m > 0$, recall that $$Thk^{\%}_{\epsilon}[o,g_m]\doteq \frac{|\{0 \leq s \leq m: g_s \in \T_{\epsilon}(S)\}|}{d_T(o,g_m)}.$$ 
\begin{theorem}[\cite{DDM} Proposition 5.5] \label{theorem:DowDucMas}
	For all $0 < \theta < 1$, there exists $\epsilon > 0$ such that for all $o = (X,\phi) \in \T(S)$ $$\lim_{R_0 \rightarrow \infty}\nu_o\left( \{q \in QD^1(X):  Thk^{\%}_{\epsilon}[o,g^q_m] \geq \theta , \forall m > R_0 \}\right) = 1.$$	
\end{theorem}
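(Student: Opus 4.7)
The plan is to deduce the statement from the Birkhoff ergodic theorem applied to the Teichm\"{u}ller geodesic flow on the moduli space $\mathcal{Q}^1(S)$ of unit-area holomorphic quadratic differentials. By the classical theorems of Masur and Veech, $\mathcal{Q}^1(S)$ carries a finite Masur-Veech measure $\mu_{MV}$ (which I normalize to be a probability measure) with respect to which the Teichm\"{u}ller flow $\{g_t\}$ is ergodic. Let $\Pi:\mathcal{Q}^1(S)\to\mathcal{M}(S)$ denote the projection to the moduli space of Riemann surfaces, and let $\Theta_\epsilon$ be the image of $\T_\epsilon(S)$ under the quotient $\T(S)\to\mathcal{M}(S)$.

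First I would match measures on the fiber. Disintegrating $\mu_{MV}$ along $\Pi$ yields a measure on the fiber $QD^1(X)$ over $[X]\in\mathcal{M}(S)$; using the degree-$h$ homogeneity of $\nu_{Th}$ together with the definition $\nu_o(U)=\nu_{Th}(\{\xi:[\xi]\in U,\,\E_o(\xi)\le 1\})$ and the Hubbard-Masur map $\mathcal{V}:QD(X)\setminus\{0\}\to\MF(S)$, one verifies that this fiber measure is proportional to $\nu_o$. This identification is essentially classical and is the same one exploited by ABEM in identifying the Thurston measure with a Patterson-Sullivan measure.

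Next, I would apply Birkhoff to the indicator $\mathds{1}_{\Pi^{-1}(\Theta_\epsilon)}$ under $g_t$: for $\mu_{MV}$-a.e.\ $q$,
$$\lim_{m\to\infty}\frac{|\{0\le s\le m:\,g_s q\in\Pi^{-1}(\Theta_\epsilon)\}|}{m}=\mu_{MV}(\Pi^{-1}(\Theta_\epsilon)).$$
Since $g_s q\in\Pi^{-1}(\Theta_\epsilon)$ precisely when $g^q_s\in\T_\epsilon(S)$, and by the previous step this $\mu_{MV}$-a.e.\ statement descends to a $\nu_o$-a.e.\ statement on $QD^1(X)$, one obtains $Thk^{\%}_\epsilon[o,g^q_m]\to\mu_{MV}(\Pi^{-1}(\Theta_\epsilon))$ for $\nu_o$-a.e.\ $q$. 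As $\Theta_\epsilon$ exhausts $\mathcal{M}(S)$ up to a null set when $\epsilon\to 0$, the right-hand side tends to $1$, so one may fix $\epsilon>0$ so that it strictly exceeds $\theta$.

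Finally, set $A_{R_0}=\{q\in QD^1(X):Thk^{\%}_\epsilon[o,g^q_m]\ge\theta,\ \forall m > R_0\}$. These sets are nested and increase to a set of full $\nu_o$-measure by the previous step, so monotone convergence yields $\nu_o(A_{R_0})\to 1$, which is exactly the claim. The main obstacle is the fiber-measure identification in the first step: while implicit in the literature, one has to track normalizations through the Hubbard-Masur map and the degree-$h$ homogeneity of $\nu_{Th}$ carefully enough to see that the ergodic conclusion on $\mathcal{Q}^1(S)$ genuinely transfers to $\nu_o$-almost every direction at the fixed basepoint $o$, rather than only for almost every basepoint.
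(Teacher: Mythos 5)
The paper does not prove this statement --- it is quoted directly from \cite{DDM}, Proposition 5.5 --- so your argument is necessarily an independent reconstruction. The overall strategy (Masur--Veech ergodicity of the Teichm\"uller flow on $\mathcal{Q}^1(\mathcal{M}(S))$, the ABEM identification of the conditional measures of $\mu_{MV}$ on the fibers of $\mathcal{Q}^1(\T(S))\to\T(S)$ with the measures $\nu_o$, Birkhoff, then monotone convergence) is the right one and matches how this kind of fact is established in the literature. The final reduction, $\mu_{MV}(\Pi^{-1}(\Theta_\epsilon))\to 1$ and fixing $\epsilon$ so that this strictly exceeds $\theta$, is correct and exactly the mechanism needed for the ``$\forall m>R_0$'' conclusion.

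There is, however, a genuine gap, which your last paragraph senses but misdiagnoses. Birkhoff on $(\mathcal{Q}^1(\mathcal{M}(S)),g_t,\mu_{MV})$, lifted to $\mathcal{Q}^1(\T(S))$ and disintegrated over $\T(S)$, yields: for (Masur--Veech volume)-a.e.\ $o\in\T(S)$ and for $\nu_o$-a.e.\ $q\in QD^1(X)$, the time averages converge. The theorem, however, claims this for \emph{every} $o\in\T(S)$. The difficulty is not in ``tracking normalizations'' through Hubbard--Masur and the homogeneity of $\nu_{Th}$ --- that identification is clean --- but in the elementary fact that disintegration of an a.e.\ statement on the total space never produces an every-fiber conclusion; a single fiber $QD^1(X)$ has measure zero. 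To close the gap you need an extra input. One option: for uniquely ergodic $[\xi]\in\PMF(S)$, the geodesic rays from any two basepoints $o,o'$ to $[\xi]$ are asymptotic (Masur), so their asymptotic proportions in $\T_\epsilon(S)$ agree up to an arbitrarily small drop in the $\epsilon$-level; since uniquely ergodic directions are $\nu_o$-full for every $o$ and $[\nu_o]=[\nu_{o'}]$ for all $o,o'$, the a.e.-basepoint conclusion propagates to every basepoint, possibly after shrinking $\epsilon$ once and for all. Another option: replace Birkhoff $+$ Fubini by the basepoint-independent equidistribution theorem (for every $X$, a.e.\ geodesic ray from $X$ equidistributes with respect to $\mu_{MV}$), which is what the source argument ultimately relies on. Absent one of these, your proposal proves the statement only for a.e.\ basepoint, not for all $o\in\T(S)$ as asserted.
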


We then fix any $\theta \in (0,1)$ and take $\epsilon > 0$ sufficiently small. We identify $QD^1(X)$ with $\PMF(S)$ and $g^q_t$ with $\mathcal{V}(q)(t)$. For each $R > 0$, we define $$ U(R,\theta,\epsilon) = \{\xi \in \PMF(S): Thk^{\%}_{\epsilon}[o,\xi_m] \geq \theta, \forall m >R\}.$$ Then if $R_2 \geq R_1 > 0$, we have $$ U(R_1,\theta,\epsilon) \subset U(R_2,\theta,\epsilon).$$ Define $$U(\theta, \epsilon) = \cup_{R > 0} U(R,\theta,\epsilon),$$ then, by Theorem \ref{theorem:DowDucMas}, one has $$\nu_o(U(\theta,\epsilon))) = 1.$$ Furthermore, after a suitable choice of $\theta$, one has $\nu_o(\partial (U(\theta,\epsilon))) = 0$ and by Masur's criterion (Lemma \ref{lemma:masurcriterion}), the set $U(\epsilon,\theta)$ consists of uniquely ergodic directions. We now fix the choice of $\epsilon$ and $\theta$ and for $\gamma \in \M(S)$, denote the direction determined by the oriented geodesic $[o,\gamma\cdot o]$ by $\xi_{\gamma}$. Now we are in a position to describe $E_n$. Fix $\rho > 0$ and let $L_0 = L_0(\theta,\epsilon)$ be the constant as Theorem \ref{theorem:DDM}. For $\frac{1}{3h}\ln \ln n > \max{\{L_0,\rho\}}$,  define \textbf{THE SET $\mathcal{E}(\theta,\epsilon, n, o, \rho)$} to be the set of all elements $\gamma$ in $\M(S)$ satisfying:
\begin{itemize}
	\item[(a)] $d(\gamma \cdot o,o) \in (n- \rho, n+\rho)$;
	\item[(b)] Both $\xi_{\gamma}$ and $\xi_{\gamma^{-1}}$ are in $U(\theta, \epsilon)$;
	\item[(c)] If $g(t)$ is either the geodesic ray $\xi_{\gamma}(t)$ or $\xi_{\gamma^{-1}}(t)$, then the segment $[o, g(\frac{1}{3h}\ln \ln n)]$ has at least proportion $\theta$ in $\T_{\epsilon}(S)$.
\end{itemize}

\begin{lemma}\label{lemma:longsegment}
	Let $n$ large enough as before. Then for $\gamma \in \mathcal{E}(\theta,\epsilon, n, o, \rho)$, there exists a geodesic segment $I_{\gamma}$ of length $\frac{1}{3h}\ln \ln n$ in the geodesic $[o,\gamma \cdot o]$ that has at least proportion $\theta$ in $\T_{\epsilon}(S)$ and containing $\gamma \cdot o$. 
\end{lemma}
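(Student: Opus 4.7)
The idea is to transport the thick initial subsegment of the ray in direction $\xi_{\gamma^{-1}}$ by the isometry $\gamma$, and use that the $\epsilon$-thick part is $\M(S)$-invariant. More precisely, set $T = \frac{1}{3h}\ln\ln n$. By definition, the ray $\xi_{\gamma^{-1}}(t)$ for $t\in[0,d_T(o,\gamma^{-1}\cdot o)]$ parametrizes the geodesic segment $[o,\gamma^{-1}\cdot o]$. For $n$ sufficiently large, condition (a) gives $d_T(o,\gamma^{-1}\cdot o)=d_T(o,\gamma\cdot o)>n-\rho>T$, so the subsegment
\[
J_\gamma := [o,\xi_{\gamma^{-1}}(T)] \subset [o,\gamma^{-1}\cdot o]
\]
is well defined and of length $T$; by condition (c) applied to $\xi_{\gamma^{-1}}$, the segment $J_\gamma$ has at least proportion $\theta$ in $\T_\epsilon(S)$.

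Next, I would apply $\gamma$ to $J_\gamma$. Since $\gamma\in \M(S)\subset \mathrm{Isom}(\T(S),d_T)$, it maps the geodesic segment $[o,\gamma^{-1}\cdot o]$ onto the geodesic segment $[\gamma\cdot o,o]=[o,\gamma\cdot o]$, sending $o$ to $\gamma\cdot o$. Hence
\[
I_\gamma := \gamma\cdot J_\gamma = [\gamma\cdot o,\;\gamma\cdot \xi_{\gamma^{-1}}(T)]
\]
is a geodesic subsegment of $[o,\gamma\cdot o]$ of length $T=\frac{1}{3h}\ln\ln n$ having $\gamma\cdot o$ as one endpoint.

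Finally, I would check the thickness condition. Because the $\epsilon$-thick part $\T_\epsilon(S)$ is $\M(S)$-invariant (extremal lengths satisfy $\E_{\gamma\cdot y}(\gamma\cdot c)=\E_y(c)$, and $\gamma$ permutes $\mathcal{C}(S)$) and $\gamma$ acts isometrically, the set $\{t:I_\gamma(t)\in\T_\epsilon(S)\}$ is the image under $\gamma$ of $\{t:J_\gamma(t)\in\T_\epsilon(S)\}$ and has the same Lebesgue measure. Dividing by the common length $T$, we conclude $Thk^{\%}_\epsilon[I_\gamma]=Thk^{\%}_\epsilon[J_\gamma]\geq\theta$, which is what we wanted.

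There is no real obstacle here: the statement is essentially a bookkeeping consequence of (c) combined with the fact that $\M(S)$ acts on $\T(S)$ by isometries preserving the thick part. The only point requiring a brief remark is that the assumption $\frac{1}{3h}\ln\ln n>\max\{L_0,\rho\}$ together with condition (a) guarantees $T<d_T(o,\gamma^{-1}\cdot o)$, so that the subsegment $J_\gamma$ actually lies inside $[o,\gamma^{-1}\cdot o]$ and condition (c) is applicable.
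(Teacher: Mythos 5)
Your proof is correct and follows exactly the same approach as the paper: take the initial length-$\frac{1}{3h}\ln\ln n$ subsegment of $[o,\gamma^{-1}\cdot o]$, which is $\theta$-thick by condition (c), and push it forward by the isometry $\gamma$. You spell out the $\M(S)$-invariance of $\T_\epsilon(S)$ and the length check, which the paper leaves implicit, but the argument is the same.
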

\begin{proof}
	Let $\gamma \in \mathcal{E}(\theta,\epsilon, n, o, \rho)$. Since the geodesic ray $\xi_{\gamma^{-1}}(t)$ satisfies (c) in the definition of $\mathcal{E}(\theta,\epsilon, n, o, \rho)$, the first segment $I_{\gamma^{-1}}$ of $[o,\gamma^{-1} \cdot o]$ of length $\frac{1}{3h}\ln \ln n$ has at least proportion $\theta$ in $\T_{\epsilon}(S)$. As $\gamma \cdot [o, \gamma^{-1} \cdot o] = [\gamma \cdot o,o]$, therefore, the geodesic $[o,\gamma \cdot o]$ has a subinterval $\gamma \cdot I_{\gamma^{-1}}$ of length $\frac{1}{3h}\ln \ln n$ that has at least proportion $\theta$ in $\T_{\epsilon}(S)$ and starting at point $\gamma \cdot o$.   
\end{proof}
In the next section, we will prove that $\mathcal{E}(\theta,\epsilon, n, o, \rho)$ has exponential growth. We first state one obvious property of the boundary representation.
\begin{lemma}\label{essentialbound}
	Let $\pi_{\nu}$ be the boundary representation of $\M(S)$. For every $g \in \M(S)$, $\|\pi_{\nu_o}(g)\mathds{1}_{\PMF(S)}\|_{L^{\infty}(\PMF(S),\nu_o)} < \infty$
\end{lemma}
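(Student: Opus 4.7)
The plan is to compute $\pi_{\nu_o}(g)\mathds{1}_{\PMF(S)}$ pointwise and then bound it uniformly using Kerckhoff's formula (Lemma \ref{lemma:Kerckhoffformula}).

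First, by the definition of a quasi-regular representation (Section \ref{subsection:quasi-regularrepresentation}), for every $[\xi] \in \PMF(S)$,
$$\pi_{\nu_o}(g)\mathds{1}_{\PMF(S)}([\xi]) = \mathds{1}_{\PMF(S)}(g^{-1}\cdot[\xi])\, c(g,\nu_o)^{1/2}([\xi]) = c(g,\nu_o)^{1/2}([\xi]).$$
Plugging in the explicit Radon-Nikodym derivative already computed in the paper,
$$c(g,\nu_o)([\xi]) = \frac{dg_*\nu_o}{d\nu_o}([\xi]) = \left(\frac{\E_o(\xi)}{\E_{g\cdot o}(\xi)}\right)^{h/2},$$
so that $\pi_{\nu_o}(g)\mathds{1}_{\PMF(S)}([\xi]) = \bigl(\E_o(\xi)/\E_{g\cdot o}(\xi)\bigr)^{h/4}$. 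Note that, although $\E_o$ and $\E_{g\cdot o}$ are not themselves defined on $\PMF(S)$, the ratio is, since extremal length is homogeneous of degree two.

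Next, I apply Kerckhoff's formula:
$$\sup_{[\xi]\in\PMF(S)}\ln\!\left(\frac{\E_o(\xi)}{\E_{g\cdot o}(\xi)}\right) = 2\, d_T(o,g\cdot o) = 2 L(g).$$
Therefore, for every $[\xi]\in\PMF(S)$,
$$\bigl|\pi_{\nu_o}(g)\mathds{1}_{\PMF(S)}([\xi])\bigr| \leq e^{\frac{h}{2}L(g)},$$
which is finite for each fixed $g\in \M(S)$ (the Teichm\"uller distance $d_T(o,g\cdot o)$ being finite). This pointwise bound is in particular a $\nu_o$-essential bound, proving the lemma.

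No real obstacle is expected here: the statement is an immediate consequence of the definition of $\pi_{\nu_o}$ together with Kerckhoff's formula, and the argument does not rely on any hyperbolicity or statistical input. The only thing to be careful about is that extremal length descends to a well-defined ratio on $\PMF(S)$, which is exactly what was used to define $\nu_o$ in the first place.
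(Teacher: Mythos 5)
Your proof is correct and follows essentially the same route as the paper: compute $\pi_{\nu_o}(g)\mathds{1}_{\PMF(S)}([\xi])$ explicitly as $(\E_o(\xi)/\E_{g\cdot o}(\xi))^{h/4}$ from the Radon--Nikodym derivative, then apply Kerckhoff's formula to obtain the pointwise bound $e^{\frac{h}{2}d_T(o,g\cdot o)}$.
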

\begin{proof}
	The lemma is an easy consequence of Kerckhoff's formula, namely Lemma \ref{lemma:Kerckhoffformula}, on Teichm\"{u}ller distances. By Lemma \ref{lemma:Kerckhoffformula},  $$\forall x,y \in \T(S), \forall [\xi] \in \PMF(S), \left(\frac{\E_{x}(\xi)}{\E_{y}(\xi)}\right)^{\frac{1}{2}} \leq e^{d_{T}(x,y)}.$$ As $\pi_{\nu_o}(g)\mathds{1}_{\PMF(S)} = \left(\frac{\E_{o}(\xi)}{\E_{g\cdot o}(\xi)}\right)^{\frac{6g-6}{4}},$ one has $$\|\pi_{\nu_{o}}(g)\mathds{1}_{\PMF(S)}\|_{L^{\infty}(\PMF(S),\nu_{o})} \leq e^{\frac{6g-6}{2}d_T(o,g \cdot o)} < \infty.$$
\end{proof}
The following theorem is a slight variant of Theorem \ref{criterion} whose proof is close to its original proof.
\begin{theorem}\label{criterion2}
	Let $\pi_{\nu_o}$ be the associated quasi-regular representation of $\M(S)$ on $L^2(\PMF(S),\nu_o)$. Let $i(\cdot,\cdot)$ be the intersection number function on $\PMF(S)$. Let $n \gg \rho$ and let $E_n = E_n(\rho) \subset \{g \in \M(S): d_T(o,g\cdot o) \in [n-\rho,n+\rho]\}$ be symmetric. Let $e_n = Pr: E_n \longrightarrow \PMF(S)$ be the radial projection from $o$. Assume that the following conditions hold:
	\begin{itemize}
		
		\item[(1)] $\lim_{n \rightarrow \infty}|E_n| = \infty$,
		\item[(2)] for all Borel subsets $W,V \subset \PMF(S)$ such that $\nu_o(\partial W) = \nu_o(\partial V) = 0$, $$\limsup_{n \rightarrow \infty}\frac{1}{|E_n|}|\left\{\gamma \in E_n: e_n(\gamma^{-1})\in W ~\mbox{and}~e_n(\gamma)\in V \right\}| \leq \nu_o(W)\nu_o(V),$$
		\item[(3)] for every $n \gg \rho$, there are two sequences of reals $\{h_{r_n}(n, \rho)\}$ and $\{r_n\}$ such that $\lim_{n \rightarrow \infty}h_{r_n}(n,\rho) = \lim_{n\rightarrow \infty}r_n = 0$ and such that $$\forall n \in \mathbb{N}, \forall \gamma \in E_n, \frac{\langle \pi_{\nu_o}(\gamma)\mathds{1}_{\PMF(S)}, \mathds{1}_{\{x\in \PMF(S):~i(x,e_n(\gamma))\geq r_n\}}\rangle}{\Phi(\gamma)} \leq h_{r_n}(n,\rho),$$
		\item[(4)] $$\sup_{n}\left\|M^{\mathds{1}_{\PMF(S)}}_{E_n}\mathds{1}_{\PMF(S)}\right\|_{L^{\infty}(\PMF(S),\nu_o)} < \infty.$$
	\end{itemize}
	Then the quasi-regular representation $\pi_{\nu_o}$ is ergodic with respect to $(E_n, e_n)$ and any $f \in \bar{H}^{L^{\infty}(\PMF(S),\nu_o)}$, where $$H = <\mathds{1}_{U}:\nu_o(\partial U) = 0 ~\mbox{and}~ U ~\mbox{is a Borel subset of}~ \PMF(S)>.$$
\end{theorem}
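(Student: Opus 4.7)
\emph{Approach.} The plan is to adapt the argument for Theorem~\ref{criterion} from \cite{BaderMuchnik,BLP}, replacing the metric $d$ there by the intersection number function $i(\cdot,\cdot)$. By Lemma~\ref{lemma:uniquelyergodic}, at a uniquely ergodic $\eta\in\PMF(S)$ the zero set of $i(\cdot,\eta)$ is just $\{\eta\}$, so $\{[\xi]:i([\xi],\eta)<r\}$ is a neighborhood basis of $\eta$ in $\PMF(S)$ as $r\to 0$. Since all points $e_n(\gamma),e_n(\gamma^{-1})$ for $\gamma\in E_n\subset \mathcal{E}(\theta,\epsilon,n,o,\rho)$ lie in the uniquely ergodic set $U(\theta,\epsilon)$, condition (3) expressed through $i$ delivers genuine concentration of the density $c(\gamma,\nu_o)^{1/2}d\nu_o$ at $e_n(\gamma)$, and this is what drives the argument.

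\emph{Reduction.} A standard density argument reduces the weak operator convergence to testing against $\phi=\mathds{1}_W$, $\psi=\mathds{1}_V$, and $f=\mathds{1}_U$ with $\nu_o(\partial W)=\nu_o(\partial V)=\nu_o(\partial U)=0$, and target limit $\nu_o(W)\,\nu_o(U\cap V)$. Condition (4) yields a uniform $L^{\infty}\text{-}L^{\infty}$ bound on $M^{f}_{(E_n,e_n)}$, propagating convergence from the generating indicators $H$ to its $L^{\infty}$-closure $\overline{H}^{L^{\infty}}$ and to arbitrary $L^{2}$ test functions.

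\emph{Matrix coefficient.} The heart of the proof is the uniform estimate
\begin{equation*}
\frac{\langle \pi_{\nu_o}(\gamma)\mathds{1}_W, \mathds{1}_V\rangle}{\Phi(\gamma)} = \mathds{1}_W(e_n(\gamma^{-1}))\,\mathds{1}_V(e_n(\gamma)) + O\bigl(h_{r_n}(n,\rho)\bigr)
\end{equation*}
for $\gamma\in E_n$ with $e_n(\gamma)\notin\partial V$ and $e_n(\gamma^{-1})\notin\partial W$. Unitarity rewrites the inner product as $\int\mathds{1}_W(x)\,\mathds{1}_V(\gamma x)\,c(\gamma^{-1},\nu_o)^{1/2}(x)\,d\nu_o(x)$, and the cocycle identity $c(\gamma,\nu_o)(\gamma y)\,c(\gamma^{-1},\nu_o)(y)=1$ yields the pushforward identity $\gamma_{*}\bigl(c(\gamma^{-1},\nu_o)^{1/2}d\nu_o\bigr)=c(\gamma,\nu_o)^{1/2}d\nu_o$. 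Apply condition (3) to $\gamma^{-1}$ to confine all but an $h_{r_n}$-fraction of the total mass to $N_{\gamma^{-1}}=\{x:i(x,e_n(\gamma^{-1}))<r_n\}$; on this set, unique ergodicity of $e_n(\gamma^{-1})$ together with $e_n(\gamma^{-1})\notin\partial W$ force $\mathds{1}_W$ to equal the constant $\mathds{1}_W(e_n(\gamma^{-1}))$. Factor it out, use the pushforward identity to rewrite the remaining integral as one over $\gamma N_{\gamma^{-1}}$ against $c(\gamma,\nu_o)^{1/2}d\nu_o$, and then apply (3) once more, now to $\gamma$ itself, to identify it as $\mathds{1}_V(e_n(\gamma))\Phi(\gamma)+O(h_{r_n}\Phi(\gamma))$.

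\emph{Averaging and main obstacle.} Substituting the estimate into $M^{f}_{(E_n,e_n)}$ and summing reduces the claim to
\begin{equation*}
\frac{1}{|E_n|}\sum_{\gamma\in E_n}\mathds{1}_{U\cap V}(e_n(\gamma))\,\mathds{1}_W(e_n(\gamma^{-1}))\longrightarrow \nu_o(W)\,\nu_o(U\cap V).
\end{equation*}
Condition (2) gives $\limsup\le$ directly; applying (2) to the four complementary pairs $(W,U\cap V)$, $(W,(U\cap V)^c)$, $(W^c,U\cap V)$, $(W^c,(U\cap V)^c)$ and summing forces $1\le 1$, hence equality throughout, which yields the desired limit. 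Residual $\gamma$'s with $e_n(\gamma)\in\partial V$ or $e_n(\gamma^{-1})\in\partial W$ contribute negligibly since (2) with $V'=\PMF(S)$ combined with $\nu_o(\partial V)=\nu_o(\partial W)=0$ gives them asymptotic density zero in $E_n$. The main obstacle I anticipate lies in the matrix coefficient step: enforcing $N_{\gamma^{-1}}\cap\partial W=\emptyset$ uniformly as $r_n\to 0$ and $e_n(\gamma^{-1})$ varies with $\gamma$ requires a quantitative comparison between intersection-number neighborhoods and the topology of $\PMF(S)$ near $\partial W$; this will be handled by thickening $\partial W$ into open collars of vanishing $\nu_o$-mass and excluding a correspondingly small density of $\gamma$'s using (2).
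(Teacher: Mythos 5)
Your plan is broadly the right one, and the averaging step is sound: applying condition (2) to the four complementary pairs $(W,U\cap V)$, $(W,(U\cap V)^c)$, $(W^c,U\cap V)$, $(W^c,(U\cap V)^c)$ and combining $\limsup$'s with the fact that the four densities sum to $1$ does yield the limit $\nu_o(W)\nu_o(U\cap V)$. The reduction via (1), (4) to indicators with null boundary and then to $\overline{H}^{L^\infty}$ and $L^2$ is also the standard machinery from \cite{BLP}.

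The genuine gap is in the matrix coefficient step, and it is exactly the one you anticipate. The claimed identity $\frac{\langle\pi_{\nu_o}(\gamma)\mathds{1}_W,\mathds{1}_V\rangle}{\Phi(\gamma)}=\mathds{1}_W(e_n(\gamma^{-1}))\mathds{1}_V(e_n(\gamma))+O(h_{r_n})$ requires $\{x:i(x,e_n(\gamma^{-1}))<r_n\}$ to lie entirely in $W$ or in $W^c$, and since $e_n(\gamma^{-1})$ varies with $\gamma$ and $r_n$ is fixed for a given $n$, this fails uniformly whenever $e_n(\gamma^{-1})$ is close to (not merely on) $\partial W$. Your remark that this ``will be handled by thickening $\partial W$'' is the correct idea, but declaring it does not close the argument: in the absence of a metric structure on $\PMF(S)$ compatible with $i(\cdot,\cdot)$, one must (i) construct the thickenings explicitly via intersection numbers, (ii) verify that the thickenings shrink to a set on which $\nu_o$ vanishes, and (iii) verify that thickenings with $\nu_o$-null boundary exist for a dense set of radii so that condition (2) can be applied to them. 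These three points are precisely the content of the paper's modifications to \cite{BLP}: it introduces $W(r)=\bigcup_{\xi\in\mathcal{UF}\cap W}\{\eta:i(\eta,\xi)<r\}$, proves a separate lemma that $\{r>0:\nu_o(\partial\overline{W(r)})=0\}$ is dense and that $\lim_k\nu_o(\overline{W(r_k)}\cap U)\le\nu_o(\overline{W(0)}\cap U)$, and then uses the fact that $\overline{W(0)}\cap U$ consists of non-uniquely-ergodic points when $U\cap W=\emptyset$. Your proposal names the obstacle but stops short of this construction, which is where the proof actually lives.

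A secondary structural difference worth noting: the paper's route, following \cite{BLP}, works directly with the averaged quantity $\langle M^{\mathds{1}_U}_{(E_n,e_n)}\mathds{1}_{\PMF(S)},\mathds{1}_W\rangle$, applies condition (3) once per summand, and lets the thickened set $W(r_n)$ absorb all the boundary uncertainty at the level of the sum. You instead aim for a pointwise per-$\gamma$ estimate with two applications of (3), once to $\gamma$ and once to $\gamma^{-1}$, relying on symmetry of $E_n$ and $\Phi(\gamma)=\Phi(\gamma^{-1})$. Both are reasonable, but the pointwise version demands stronger uniformity than the averaged version, which is exactly why the thickening step you defer is harder to avoid in your formulation than in the paper's.
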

\begin{remark}
	As Theorem \ref{criterion2} is slightly different from Theorem \ref{criterion}, we should make a few comments. One could easily find the only difference is the point (3) here since the original point (1) has been replaced automatically by Lemma \ref{essentialbound}. The assumption (3) in Theorem \ref{criterion2}  is slightly weaker than the assumption (4) in Theorem \ref{criterion} as we don't require $h_{r}$ to be independent on $L(g)$. The cost of this weaker assumption is that, in order to make it easy to state, we should assume that elements in $E_n$ has almost the same induced distance $n$ with $id \in \M(S)$. 
\end{remark}
\begin{proof}
As the proof of in \cite[Theorem 2.2]{BLP} works quite well in our situation except small modifications, we only point out the necessary modifications here. The main issue here is that the intersection number function $i(\cdot,\cdot)$ does not define a metric structure, even not the usual topological structure on $\PMF(S)$ since there are points $\xi \neq \eta$ with $i(\eta,\xi) = 0$. However, there is a full $\nu_o-$measure subset $\mathcal{UF} \subset \PMF(S)$ consisting of uniquely ergodic measured foliations such that, for every metric on $\PMF(S)$, the induced topology coincides with the topology induced by the intersection number functions when restrict to $\mathcal{UF}$ by Lemma \ref{lemma:uniquelyergodic}.  

The proof of \cite[Page 2037, Theorem 2.2]{BLP} uses \cite[Lemma 2.20]{BLP} and \cite[Proposition 2.21]{BLP}, we point out modifications in these two places and the rest is the same as the proof in \cite[Theorem 2.2]{BLP}.

As the proof eventually based on \cite[Lemma 2.19]{BLP}, our proof also eventually based on it and the followings are our assumptions. The set $B = \PMF(S)$ is equipped with any metric $d$ which is compatible the quotient topology and whenever talking about Borel subsets $U,W$, we mean a compact subsets with null $\nu_o-$zero boundaries. These assumptions are reasonable since all statements in \cite[Lemma 2.19]{BLP} is stable under taking closures. Note that although working on metric measure spaces, the metric is only used in the following way: for subsets $U, W$, $d(U,W) > 0$ iff $\overline{U} \cap \overline{W} = \emptyset$.

Define open sets $W(r)$ for $r > 0$, 
\begin{equation*}
    \begin{aligned}
        &W(r) = \{\eta \in \PMF(S): \exists ~\xi \in \mathcal{UF} \cap W,~i(\eta, \xi) < r\} \\
        &=\cup_{\xi \in \mathcal{UF} \cap W}\{\eta: i(\eta,\xi) < r\}.
        \end{aligned}
\end{equation*}
For\cite[Lemma 2.20]{BLP}, consider $W(r_n)$ rather than $W(r)$ where $r_n$ is the sequence in (3), then take the limit $\lim_{n \to \infty}h_{\tau_n}(n,\rho)$ rather than $\lim_{s \to \infty}h_r(s)$.

For \cite[Proposition 2.21]{BLP}, consider $W(r_n)$ and $V(r_n)$ rather than $W(r)$ and $V(r)$ in the proof in \cite[Page 2037, Proposition 2.21]{BLP}. Firstly one has, for $r \geq 0$,
$$\overline{W(r)} = \cup_{\xi \in W}\{\eta: i(\eta,\xi) \leq r\}.$$
Then using the same notations,
\begin{equation*}
    \begin{aligned}
     & \limsup_{n\to \infty}\langle M^{\mathds{1}_U}_{(E_n,e_n)}\mathds{1}_{B}, \mathds{1}_{W}\rangle =\limsup_{n\to \infty}\frac{1}{|E_n|}\int_{E_n}\mathds{1}_{U}(e_n(g))\mathds{1}_{W(r_n)}(e_n(g))dg \\
     &\leq\limsup_{n\to \infty}\frac{1}{|E_n|}\int_{E_n}\mathds{1}_{U}(e_n(g))\mathds{1}_{\overline{W(r_n)}}(e_n(g))dg.
    \end{aligned}
\end{equation*}
Since $$\forall r \leq r',~~~\overline{W(r)} \subset \overline{W(r')},$$
so if $\nu_o(\partial \overline{W(r_k)}) = 0$ for all $k$, then by the second condition (2),
\begin{equation*}
    \begin{aligned}
        \limsup_{n\to \infty}\langle M^{\mathds{1}_U}_{(E_n,e_n)}\mathds{1}_{B}, \mathds{1}_{W}\rangle \leq \nu_o(U \cap \overline{W(r_k)}), \forall k.
               \end{aligned}
\end{equation*}
Hence if $$\lim_{k \to \infty}\nu_o(\overline{W(r_k)} \cap U) \leq \nu_o(\overline{W(0)} \cap U),$$ then one can continue the same argument as \cite[Proposition 2.21]{BLP} since $U  \cap \overline{W(0)}$ has only non uniquely ergodic points (by the assumption that $U \cap W = \emptyset$) and $\nu_o$ is supported on uniquely ergodic points. So we are leave to prove the following lemma which is an analogue of \cite[Proposition 2.8]{BLP} and then change $r_k$ in the assumption a little bit.  
\end{proof}
\begin{lemma}
Use the notations in the proof above. We have
\begin{itemize}
    \item[(a)] The set $\{r > 0, \nu_o(\partial \overline{W(r)}) = 0\}$ is dense.
    \item[(b)] $$\lim_{k \to \infty}\nu_o(\overline{W(r_k)} \cap U) \leq \nu_o(\overline{W(0)} \cap U).$$
    \end{itemize}
\end{lemma}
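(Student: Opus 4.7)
The plan is to reduce both statements to properties of a single scalar function on $\PMF(S)$, namely $\phi(\eta) := \inf_{\xi \in W} i(\eta,\xi)$. Because $W$ is compact and $\xi \mapsto i(\eta,\xi)$ is continuous for every fixed $\eta$, the infimum is attained at some $\xi^\star(\eta) \in W$ and, as an infimum of continuous functions over a compact parameter space, $\phi$ is upper semicontinuous on $\PMF(S)$. Combined with the identity $\overline{W(r)} = \bigcup_{\xi \in W}\{\eta : i(\eta,\xi) \leq r\}$ already used in the main proof, this gives the clean identification $\overline{W(r)} = \{\phi \leq r\}$. Upper semicontinuity further ensures that $\{\phi < r\}$ is open, which is the whole point of introducing $\phi$.

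The first key step is then the purely topological inclusion
$$\partial \overline{W(r)} \;\subset\; \{\phi \leq r\} \setminus \{\phi < r\} \;=\; \{\phi = r\}.$$
For part (a), the level sets $\{\phi = r\}$ with $r > 0$ form a pairwise disjoint family indexed by $(0,\infty)$, and $\nu_o$ is a finite measure on $\PMF(S)$; therefore at most countably many of them carry positive $\nu_o$-mass. The complement of that countable set is dense in $(0,\infty)$, and on it one has $\nu_o(\partial \overline{W(r)}) \leq \nu_o(\{\phi = r\}) = 0$, which is precisely (a).

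For part (b), set $r_k' := \sup_{j \geq k} r_j$, so that $r_k' \downarrow 0$ monotonically. Then $\overline{W(r_k)} \subset \overline{W(r_k')}$ and the decreasing intersection satisfies $\bigcap_k \overline{W(r_k')} = \{\phi \leq 0\} = \overline{W(0)}$. Continuity of the finite measure $\nu_o$ from above along this chain gives $\nu_o(\overline{W(r_k')} \cap U) \to \nu_o(\overline{W(0)} \cap U)$, and monotonicity then yields
$$\limsup_{k \to \infty} \nu_o(\overline{W(r_k)} \cap U) \;\leq\; \nu_o(\overline{W(0)} \cap U),$$
which is the inequality claimed in (b).

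The main obstacle is the structural step of establishing upper semicontinuity of $\phi$ and the identification $\overline{W(r)} = \{\phi \leq r\}$; both depend crucially on the compactness of $W$ (assumed in the preceding proof) so that the infimum is realized and the union of closed sublevel sets is itself closed. Once that is in place, parts (a) and (b) are standard continuity-of-measure arguments on the monotone family $\{\overline{W(r)}\}_{r>0}$ and require no further input from Teichm\"{u}ller theory.
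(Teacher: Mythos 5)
Your proposal is correct and follows essentially the same route as the paper: your $\phi(\eta) = \inf_{\xi \in W} i(\eta,\xi)$ is exactly the paper's $s(\eta,W)$, the inclusion $\partial\overline{W(r)} \subset \{\phi = r\}$ together with disjointness of level sets and finiteness of $\nu_o$ is the paper's argument for (a), and the monotone limit of measures along a decreasing family is the paper's argument for (b). Your passage to the monotone majorants $r_k' = \sup_{j \geq k} r_j$ is a small but genuine improvement, since it handles a not-necessarily-monotone sequence $\{r_k\} \to 0$, which the paper glosses over by simply assuming the sequence is decreasing.
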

\begin{proof}
Assume $W$ is compact, otherwise replace it by its closure. The part (b) is simply an application of Monotone Convergence Theorem since $\overline{W(r_{k+1})} \cap U  \subset \overline{W(r_{k})} \cap U$ for a decreasing sequence $\{r_k\}$. For the part (a), we argue in the same way as \cite[Proposition 2.18]{BLP}. Also assume $W$ is compact, thus for $\xi \in \PMF(S)$, define $s(\xi,W) = \min{\{i(\xi,w),w \in W \}}$. Therefore $$\forall r > 0, \partial \overline{W(r)} \subset \{\eta: s(\eta, W) = r\}.$$ So for $r \neq r', \partial \overline{W(r)} \cap \partial \overline{W(r')} = \emptyset$. Hence if the part (a) is wrong, then it will contradict with the fact that $\nu_o$ has finite measure.

\end{proof}
 Our main result is the following theorem.
\begin{theorem}\label{theoreom:ergodicitymcg}
	There exist $\theta$ and $\epsilon$ such that, if $E_n = \mathcal{E}(\theta,\epsilon, n, o, \rho)$, which is described before Lemma \ref{lemma:longsegment} (and up to passing to asubsequence), then the boundary representation $\pi_{\nu_o}$ is ergodic with respect to $(E_n, Pr)$ and any $f \in \bar{H}^{L^{\infty}}$ as above. In other words, the pair $(E_n = \mathcal{E}(\theta,\epsilon, n, o, \rho), Pr)$ satisfies all conditions listed in Theorem \ref{criterion2}.
\end{theorem}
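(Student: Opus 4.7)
The plan is to verify the four hypotheses of Theorem~\ref{criterion2} for $E_n = \mathcal{E}(\theta,\epsilon,n,o,\rho)$ with projection $Pr$, and then invoke that theorem directly. First I fix parameters: pick $\theta \in (0,1)$, then by Theorem~\ref{theorem:DowDucMas} choose $\epsilon > 0$ small enough that $\nu_o(U(\theta,\epsilon)) = 1$. By monotonicity of $U(R,\theta,\epsilon)$ in $R$, a suitable further adjustment of $\theta$ gives $\nu_o(\partial U(\theta,\epsilon)) = 0$, and by Masur's criterion (Lemma~\ref{lemma:masurcriterion}) every direction in $U(\theta,\epsilon)$ is uniquely ergodic. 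The rest of the proof then proceeds condition by condition.

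Conditions (1) and (2) are lattice-orbit equidistribution statements. For (1), exponential growth of $|E_n|$ follows from the Athreya--Bufetov--Eskin--Mirzakhani lattice-point asymptotics on $\T(S)$ (see \cite{ABEM}): the total count of $\gamma$ with $d_T(o,\gamma\cdot o)\in(n-\rho,n+\rho)$ has order $e^{hn}$, while conditions (b), (c) in the definition of $\mathcal{E}(\theta,\epsilon,n,o,\rho)$ select lattice points whose pair of radial directions lies in $U(\theta,\epsilon)\times U(\theta,\epsilon)$ with a thick initial segment of length $\tfrac{1}{3h}\ln\ln n$; by Theorem~\ref{theorem:DowDucMas} this removes only a subexponential fraction, so $|E_n|$ grows exponentially. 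Condition (2) follows from the full joint equidistribution of lattice points with marked endpoints: the empirical measures of $(e_n(\gamma^{-1}), e_n(\gamma))$ on $\PMF(S)^2$ converge weakly to $\nu_o\otimes\nu_o$, so for Borel $W,V$ with $\nu_o(\partial W) = \nu_o(\partial V) = 0$ (and $W,V$ continuity sets for $\nu_o$) one reads off the desired limsup inequality on a subsequence.

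Condition (3) is the technical heart and is where Theorem~\ref{Harish-Chandra} enters. The numerator to be bounded is
\[
\int_{\{[\xi]\in\PMF(S):\,i(\xi,e_n(\gamma))\geq r_n\}}\left(\frac{\E_o(\tau(\xi))}{\E_{\gamma\cdot o}(\tau(\xi))}\right)^{h/4}d\nu_o([\xi]).
\]
The idea is that, via Kerckhoff's formula (Lemma~\ref{lemma:Kerckhoffformula}), a large value of $\E_o(\xi)/\E_{\gamma\cdot o}(\xi)$ forces the ray $[o,[\xi])$ to fellow-travel $[o,\gamma\cdot o]$ on a long initial segment; condition (c) in the definition of $E_n$ supplies an $\epsilon$-thick subsegment of $[o,\gamma\cdot o]$ of length $\tfrac{1}{3h}\ln\ln n$, so Theorems~\ref{theorem:DDM} and~\ref{theorem:fellowtravelling} translate this metric fellow-travelling into an intersection-number bound $i(\tau(\xi),\xi_\gamma)\lesssim r_n$ for a sequence $r_n\to 0$ that is essentially determined by the thick segment length. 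Dividing the resulting exponential decay of the complementary integral by the lower bound $\Phi(\gamma)\gtrsim n\,e^{-hn/2}$ from Theorem~\ref{Harish-Chandra} produces $h_{r_n}(n,\rho)\to 0$.

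Finally, for condition (4) I would expand $M^{\mathds{1}_{\PMF(S)}}_{E_n}\mathds{1}_{\PMF(S)}(x)$ as a normalized sum of $\E_o(\tau(\xi))^{h/4}/\E_{\gamma\cdot o}(\tau(\xi))^{h/4}\Phi(\gamma)$ and observe, using the analysis of condition (3), that the summand is exponentially small unless $\xi_\gamma$ is intersection-number close to $x$. Combined with the upper bound on $\Phi(\gamma)$ from Theorem~\ref{Harish-Chandra}, the $L^\infty$ bound reduces to a lattice-point count: the number of $\gamma\in E_n$ whose forward ray fellow-travels $[o,x)$ on a long segment, divided by $|E_n|\asymp e^{hn}$. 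Statistical hyperbolicity and standard Teichmüller shadow estimates give a uniform bound. I expect the main obstacle to be condition (3): intersection number does not define a metric topology on $\PMF(S)$, so converting the Kerckhoff/extremal-length comparison into genuine intersection-number decay requires delicately combining the thick-part hypothesis with Hubbard--Masur type regularity of $\nu_o$ on fellow-travel neighborhoods of uniquely ergodic directions.
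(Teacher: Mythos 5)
Your high-level plan — fix $(\theta,\epsilon)$ via Theorem~\ref{theorem:DowDucMas} and Masur's criterion, then verify the four hypotheses of Theorem~\ref{criterion2} — is exactly the paper's strategy, and your treatments of conditions (1) and (2) correctly track Corollary~\ref{corollary:exponentialgrowth} and Corollary~\ref{weakconvergence}. However, your descriptions of how conditions (3) and (4) are actually established contain genuine misconceptions.

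For condition (3), you attribute the mechanism to Kerckhoff/fellow-travelling (Theorems~\ref{theorem:DDM} and~\ref{theorem:fellowtravelling}) and claim the complementary integral has ``exponential decay.'' Neither is right. The paper's Proposition~\ref{nontangentconvergence} instead applies Minsky's inequality to bound the numerator $\int_{\{i(\cdot,\xi_\gamma)\geq r_n\}}\E_{\gamma\cdot o}(\xi)^{-h/4}\,d\nu$ by $e^{-hn/2}\Psi(\gamma)_{\geq r_n}$, and then the regularity of $\nu$ along pants-curve approximants (Proposition~\ref{proposition:pantscurveapp}, Theorem~\ref{theorem:regularity}, Lemma~\ref{lemma:log}) gives $\Psi(\gamma)_{\geq 1/n}\lesssim\ln n$. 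So the numerator is of order $\ln n\cdot e^{-hn/2}$, which is the \emph{same} exponential order as $\Phi(\gamma)\gtrsim n\,e^{-hn/2}$; only the polynomial factors differ, and the ratio decays like $\ln n / n$. Calling the numerator ``exponentially small'' relative to $\Phi(\gamma)$ is false, and without the $\Psi$-integral and the pants-curve regularity the argument cannot be closed. (The fellow-travelling/thick-segment apparatus you invoke does appear, but in Lemma~\ref{lemma:inverseminsky}, i.e.\ in the \emph{lower} bound for $\Phi(\gamma)$ inside Theorem~\ref{Harish-Chandra}, not directly in condition (3).)

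For condition (4), the claim that ``the summand is exponentially small unless $\xi_\gamma$ is intersection-number close to $x$'' is also incorrect: after normalizing by $\Phi(\gamma)$, summands over $\gamma$ with $i(\xi_\gamma,\zeta)$ bounded away from zero are of size roughly $1/n$, not exponentially small. The paper's Theorem~\ref{uniformbounded} splits $E_n$ into a far part and a near part. The far part is controlled not by a lattice count but by a sum-to-integral comparison, using the Shadow Lemma~\ref{lemma:Shodowlemma} to match each shadow $\mathcal{O}(\gamma\cdot o,R)$ with $\nu$-mass $\asymp e^{-hn}$, which reduces the sum to $\frac{1}{n}\Psi(\zeta)_{\geq Ce^{-2n}}\lesssim 1$ (Theorem~\ref{theorem:psiestimation}). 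Only the near part (those $\gamma$ whose shadow or radial direction meets the intersection-number neighborhood of $\zeta$) is handled by the lattice count $|H_n|\prec n$ via Lemma~\ref{lemma:twoneigborhood} and Corollary~\ref{corollary:boundednumberpoints}. Your sketch conflates these two very different mechanisms. In short: the skeleton is right and your list of ingredients (ABEM, Harish--Chandra, DDM/Rafi, shadow estimates) is roughly correct, but the actual structure of the estimates in (3) and (4) — especially the role of the $\Psi$-integrals and the polynomial rather than exponential gain — is not captured, and as written the argument does not close.
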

As $\nu_o$ is a Radon measure, one has immediately the following two corollaries by Proposition \ref{Irre} and Remark \ref{remark:quasiregularirreducible}.
\begin{corollary}\label{corollary:mappingclassgroupirreducibility}
	The boundary representation $\pi_{\nu_o}$ of $\M(S)$ is irreducible.
\end{corollary}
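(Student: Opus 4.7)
The plan is to verify the four hypotheses of Theorem \ref{criterion2} for the symmetric family $E_n = \mathcal{E}(\theta,\epsilon,n,o,\rho)$ after a judicious choice of parameters. Fix $\theta$ close to $1$ and take $\epsilon$ small enough so that Theorem \ref{theorem:DowDucMas} guarantees $\nu_o(U(\theta,\epsilon))=1$ and that the boundary condition $\nu_o(\partial U(\theta,\epsilon))=0$ holds; shrinking $\epsilon$ further (using Masur's criterion, Lemma \ref{lemma:masurcriterion}) ensures $U(\theta,\epsilon)$ consists of uniquely ergodic directions, which is what makes the intersection-number topology usable on a full-measure set.

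For Condition (1), I would argue exponential growth of $|E_n|$ by orbit-counting in $\T(S)$: the number of $\gamma\in\M(S)$ with $d_T(o,\gamma\cdot o)\in(n-\rho,n+\rho)$ is of order $e^{hn}$, and the conditions (b)--(c) in the definition of $\mathcal{E}(\theta,\epsilon,n,o,\rho)$ cut out only a definite positive proportion of orbit points as $n\to\infty$, since the initial ``thick'' segment has length only $\tfrac{1}{3h}\ln\ln n\ll n$ and the condition (b) becomes a full-measure constraint on the visual sphere. Passing to a subsequence if necessary takes care of any loss. For Condition (2), the equidistribution of the pair $\bigl(e_n(\gamma^{-1}),e_n(\gamma)\bigr)$ toward $\nu_o\otimes\nu_o$ should follow from the mixing of the Teichm\"uller geodesic flow on the moduli space combined with the thick-segment constraint (a,c), which forces the initial and terminal portions of $[o,\gamma o]$ to project into a compact part of moduli space where mixing applies; the independence of forward and backward endpoints is then standard.

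Condition (3) is the central analytic input, and I would split it into two ingredients. The first is the Harish-Chandra estimate (the middle theorem stated in the introduction): rewriting
$$\Phi(\gamma)=\int_{\PMF(S)}\left(\frac{\E_o(\xi)}{\E_{\gamma\cdot o}(\xi)}\right)^{h/4}\! d\nu_o(\xi),$$
and substituting extremal length by intersection numbers against a Masur--Minsky pants decomposition along $[o,\gamma\cdot o]$, should collapse the integral to an explicit sum of the form $\sum_k e^{-hk/2}$ over dyadic bands, producing the stated $n e^{-hn/2}$ asymptotic. The second ingredient is a shadow bound: for $\gamma\in E_n$, the direction $\xi_\gamma$ is uniquely ergodic and $\gamma\cdot o$ is far from $o$, so the integrand $(\E_o/\E_{\gamma\cdot o})^{h/4}$ is concentrated on a neighborhood of $\xi_\gamma$ in intersection-number terms. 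Comparing the conditional integral over $\{i(\cdot,e_n(\gamma))\ge r_n\}$ to $\Phi(\gamma)$ then yields the required $h_{r_n}(n,\rho)$ once $r_n\to 0$ slowly enough (say $r_n=(\ln n)^{-1/2}$); here the statistical-hyperbolicity-based thinness of triangles (Theorem \ref{theorem:DDM}) together with Theorem \ref{theorem:fellowtravelling} controls extremal lengths away from the direction $\xi_\gamma$.

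The main obstacle will be Condition (4), the uniform $L^\infty$ bound on $M^{\mathds{1}}_{E_n}\mathds{1}_{\PMF(S)}$. Writing this function at $\eta\in\PMF(S)$ as $|E_n|^{-1}\sum_{\gamma\in E_n}\Phi(\gamma)^{-1}(\E_o(\eta)/\E_{\gamma\cdot o}(\eta))^{h/4}$ and dyadically decomposing $E_n$ according to the intersection number $i(\xi_\gamma,\eta)$, I would bound each shell by a lattice-point count in a Teichm\"uller ball whose center is shifted along the geodesic ray toward $\eta$. The thickness conditions (b)--(c) on elements of $E_n$, together with Theorem \ref{theorem:DDM}, force any $\gamma$ with $\xi_\gamma$ near $\eta$ to have its terminal segment fellow-travel a fixed geodesic ray, reducing the count to a lattice-point count in a ball of radius $n-O(\log\tfrac{1}{r})$, which is of size $e^{h(n-O(\log 1/r))}$. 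Weighting by the Harish-Chandra factor $\Phi(\gamma)^{-1}\sim n^{-1}e^{hn/2}$ and summing the dyadic shells in $r$ should produce a geometric series that converges to a constant independent of $\eta$ and $n$; the delicate point is that this argument requires tight exponential control on how many $\gamma\in E_n$ have $\xi_\gamma$ within a given intersection-number neighborhood of $\eta$, which is where the choice of parameters and the hyperbolic features of $(\T(S),d_T)$ must be exploited most carefully.
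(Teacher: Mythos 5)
Your roadmap is the same as the paper's: establish ergodicity by verifying the hypotheses of Theorem \ref{criterion2} for $E_n=\mathcal{E}(\theta,\epsilon,n,o,\rho)$, and then pass to irreducibility via Proposition \ref{Irre} (Bader--Muchnik) together with the fact that $\nu_o$ is Radon — a step you leave implicit but which is exactly the paper's one-line derivation of the corollary. The ingredients you list (ABEM orbit-counting for (1)--(2), Harish-Chandra via intersection numbers and pants decompositions for (3), shadow bounds and lattice-point counts for (4)) are all the ones the paper deploys, with the difference that the paper splits $E_n$ into an inner/outer pair rather than using a full dyadic decomposition, which amounts to the same thing.

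One correction to the heuristic for Condition (4): the dyadic shells do \emph{not} form a geometric series. With the lattice count $\asymp e^{hn}r^{h/2}$ and the integrand bound $\asymp r^{-h/2}e^{-hn/2}$ the $r$-dependence cancels exactly, so after weighting by $\Phi(\gamma)^{-1}\asymp n^{-1}e^{hn/2}$ each of the $\sim n$ shells contributes $\asymp 1/n$. Boundedness is therefore supplied precisely by the factor $n$ in the Harish-Chandra asymptotic $\Phi(\gamma)\asymp n e^{-hn/2}$, not by geometric decay; if one believed the shells decayed geometrically one might think that factor unnecessary, and the argument would then break. The same phenomenon underlies the Harish-Chandra estimate itself: the integral collapses to roughly $n$ equal contributions of size $e^{-hn/2}$ (not $\sum_k e^{-hk/2}$), giving $\asymp ne^{-hn/2}$.
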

\begin{corollary}
	The mapping class group $\M(S)$ acts ergodically on $\PMF(S)$ with respect to the measure class $[\nu_o]$.	
\end{corollary}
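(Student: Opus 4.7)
The plan is to verify, in order, the four conditions of Theorem \ref{criterion2} for the family $E_n = \mathcal{E}(\theta,\epsilon,n,o,\rho)$ with $e_n = Pr$, after choosing $\theta$ and $\epsilon$ so that $\nu_o(\partial U(\theta,\epsilon))=0$ (possible for all but countably many $\theta$, since the $U(\theta,\epsilon)$ are nested in $\theta$ and have finite total $\nu_o$-mass). Condition (1) would be handled by combining Theorem \ref{theorem:DowDucMas} with an Athreya-Bufetov-Eskin-Mirzakhani type lattice-point count in Teichm\"uller shells: the number of $\gamma$ with $d_T(o,\gamma \cdot o) \in [n-\rho,n+\rho]$ grows like $e^{hn}$, while the thickness requirements (b), (c) in the definition of $\mathcal{E}$ exclude only a $\nu_o$-null set of directions, so a definite fraction of orbit points survives. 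This simultaneously yields $|E_n| \to \infty$ and the exponential growth stated before Lemma \ref{lemma:longsegment}.

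For condition (2) I would use the ABEM identification of $\nu_o$ with the Patterson-Sullivan measure on the Gardiner-Masur boundary, together with mixing of the Teichm\"uller geodesic flow on moduli space, to prove a pair equidistribution: the joint distribution of forward and backward radial projections $(e_n(\gamma^{-1}), e_n(\gamma))$ over $\gamma \in E_n$ tends weakly to $\nu_o \otimes \nu_o$, giving the required product upper bound. Condition (3) rests on the Harish-Chandra estimate $\Phi(\gamma) \asymp n e^{-hn/2}$ from Theorem \ref{Harish-Chandra}. Writing the numerator in the integral form
$$\int_{\{i(\xi,e_n(\gamma)) \geq r_n\}}\left(\frac{\E_o(\xi)}{\E_{\gamma \cdot o}(\xi)}\right)^{h/4} d\nu_o(\xi),$$
using the Masur-Minsky pants projection (exactly as in the proof of Theorem \ref{Harish-Chandra}) together with the comparison $\sqrt{\E_o(\xi)} \asymp \ell_o(\xi)$, the integrand peaks near $e_n(\gamma)$ in intersection number; excising $\{i(\xi,e_n(\gamma)) < r_n\}$ strips away the $\ln n$-factor coming from this peak and leaves a numerator of order $e^{-hn/2}$, so the ratio is $O(1/n)$. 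Choosing $r_n \to 0$ slowly, this yields $h_{r_n}(n,\rho) \to 0$.

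Condition (4) is where statistical hyperbolicity plays the decisive role. At a fixed $\xi \in \PMF(S)$, the value $M^{\mathds{1}}_{E_n}\mathds{1}(\xi)$ is, after substituting the Radon-Nikodym derivative and the lower bound on $\Phi(\gamma)$, a weighted count of elements $\gamma \in E_n$ whose projection $e_n(\gamma)$ has small intersection number with $\xi$. Lemma \ref{lemma:longsegment} together with Theorem \ref{theorem:DDM} forces each geodesic $[o,\gamma \cdot o]$ with $\gamma \in E_n$ to thin-triangle with $[o,\xi]$ along its terminal $\tfrac{1}{3h}\ln\ln n$-segment of at-least-proportion-$\theta$ thickness, converting the sum into a shadow-type lattice-point count in Teichm\"uller balls that is uniformly bounded in $\xi$. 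The main obstacle is precisely this last step: intersection number is not a metric on $\PMF(S)$, so the shadow estimates available in the CAT$(-1)$ setting of \cite{Boyer17} cannot be invoked directly; instead one must work on the full-measure set of uniquely ergodic points (Lemma \ref{lemma:uniquelyergodic}), use continuity of the $\M(S)$-action there (Lemma \ref{smoothness}) to compare intersection-number shadows with Teichm\"uller-ball shadows, and combine this with statistical hyperbolicity to obtain the uniform $L^\infty$ bound.
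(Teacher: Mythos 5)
The paper's proof of this corollary is a one-line deduction: it follows immediately from Theorem \ref{theoreom:ergodicitymcg} (ergodicity of the boundary representation with respect to $(E_n, Pr)$) together with Remark \ref{remark:quasiregularirreducible}, i.e.\ \cite[Proposition 2.5]{BLP}, which asserts that ergodicity of the quasi-regular representation implies ergodicity of the underlying measure-class-preserving action. Your proposal instead sets out to re-verify all four hypotheses of Theorem \ref{criterion2} from scratch, which amounts to re-proving the main theorem of the paper rather than deducing this corollary from it. Your sketch of conditions (1)--(4) is broadly aligned with how Sections 3--5 actually proceed (exponential growth of $|E_n|$ via \cite{ABEM}, the Harish--Chandra estimate via the pants-curve approximation and Lemma \ref{lemma:log}, and uniform boundedness via statistical hyperbolicity and shadow counting restricted to uniquely ergodic directions), so the route could in principle be carried out, but it is a vastly longer path to a statement the paper treats as immediate.

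More importantly, there is a genuine gap at the very end. Even granting the full verification of conditions (1)--(4), Theorem \ref{criterion2} delivers that $\pi_{\nu_o}$ is ergodic in the sense of Definition \ref{definition:ergodicityofrepresentation}, i.e.\ $M^{f}_{(E_n,e_n)} \to m(f) P_{\mathds{1}_X}$ in the weak operator topology for suitable $f$. The corollary asserts something different: that the $\M(S)$-action on $(\PMF(S),[\nu_o])$ is ergodic in the classical measure-theoretic sense, namely that every $\M(S)$-invariant Borel subset has measure $0$ or $1$. The passage from representation-ergodicity to action-ergodicity is not a tautology; it is precisely the content of \cite[Proposition 2.5]{BLP} and is the reason Remark \ref{remark:quasiregularirreducible} is invoked. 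Your proposal never states, invokes, or reproduces this step, so as written it does not reach the stated conclusion.
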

We then mention a property of the boundary representation $\pi_{\nu_o}$. Recall that a unitary representation of a group $G$ is called {\it tempered} if it is weakly contained in the regular representation $L^{2}(G)$.
\begin{proposition}\label{proposition:tempered}
	The boundary representation $\pi_{\nu_o}$ of $\M(S)$ is tempered.
\end{proposition}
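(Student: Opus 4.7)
The plan is to verify temperedness via the criterion of Cowling--Haagerup--Howe: a unitary representation $\pi$ of a discrete group $G$ is weakly contained in the left regular representation $\ell^2(G)$ if and only if there exists a dense subspace $D \subset H_\pi$ whose matrix coefficients $g \mapsto \langle \pi(g)v, w\rangle$ lie in $\ell^{2+\epsilon}(G)$ for all $v, w \in D$ and every $\epsilon > 0$. By Corollary \ref{corollary:mappingclassgroupirreducibility}, the boundary representation $\pi_{\nu_o}$ is irreducible, so every nonzero vector is cyclic; in particular, the linear span of the orbit $\{\pi_{\nu_o}(g)\mathds{1}_{\PMF(S)} : g \in \M(S)\}$ is dense. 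Since matrix coefficients between orbit vectors take the form $\langle \pi_{\nu_o}(h)\mathds{1}_{\PMF(S)}, \pi_{\nu_o}(k)\mathds{1}_{\PMF(S)}\rangle = \Phi(k^{-1}h)$, and between finite linear combinations of orbit vectors reduce to finite sums of translates of $\Phi$, it suffices to show that $\Phi \in \ell^{2+\epsilon}(\M(S))$ for every $\epsilon > 0$.

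For this the key inputs are a pointwise upper bound on the Harish--Chandra function together with orbital counting. I would first establish the bound
$$\Phi(g) \leq C\bigl(1 + d_T(o, g\cdot o)\bigr)\, e^{-\frac{h}{2}d_T(o, g\cdot o)}, \qquad \forall\, g \in \M(S),$$
with $h = 6g-6$, extending the upper estimate of Theorem \ref{Harish-Chandra} from elements of the sets $E_n$ to arbitrary elements of the mapping class group. The argument behind Theorem \ref{Harish-Chandra} passes through integrals of intersection-number expressions combined with the Masur--Minsky pants-projection map, and the same reduction should yield the bound above for every $g$, since the thickness hypothesis defining $E_n$ is used to secure the matching \emph{lower} bound rather than the upper one. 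Combined with the Athreya--Bufetov--Eskin--Mirzakhani orbital counting estimate $\#\{g \in \M(S) : d_T(o, g\cdot o) \leq R\} \asymp e^{hR}$ from \cite{ABEM}, partitioning $\M(S)$ into annuli $A_n = \{g : n \leq d_T(o, g\cdot o) < n+1\}$ gives
$$\sum_{g \in \M(S)} \Phi(g)^{2+\epsilon} \lesssim \sum_{n \geq 0} |A_n| \cdot (1+n)^{2+\epsilon}\, e^{-\frac{h(2+\epsilon)}{2}n} \lesssim \sum_{n \geq 0} (1+n)^{2+\epsilon}\, e^{-\frac{h\epsilon}{2}n} < \infty,$$
which yields $\Phi \in \ell^{2+\epsilon}(\M(S))$ and hence the desired temperedness.

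The main obstacle is precisely the first step: promoting the upper Harish--Chandra bound from $E_n$ to all of $\M(S)$. Elements $g$ for which the Teichm\"uller geodesic $[o, g\cdot o]$ has long excursions deep into the thin part of moduli space require extra care, since the extremal-length comparisons used in the proof of Theorem \ref{Harish-Chandra} degenerate there. I would treat these by combining Rafi's thick--thin decomposition of Teichm\"uller geodesics with estimates for the Thurston measure of shadows of thin regions, or alternatively by invoking the description of $\nu_o$ as a Patterson--Sullivan measure of dimension $h$ on the Gardiner--Masur boundary from \cite{ABEM} and applying the corresponding shadow lemma, which gives directly the pointwise polynomial-times-exponential decay of $\Phi$ needed above.
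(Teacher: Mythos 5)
Your approach---reducing temperedness to decay of the Harish--Chandra function via a Cowling--Haagerup--Howe criterion---is genuinely different from the paper's. The paper's proof is soft and very short: it cites topological amenability of the $\M(S)$-action on $\PMF(S)$ (Hamenst\"adt) and then applies Kuhn's theorem that the quasi-regular representation arising from an amenable action is weakly contained in the regular representation, following Garncarek's treatment of the hyperbolic-group case. No quantitative estimate on $\Phi$ is invoked at all.

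The main gap in your argument is the one you flag, but your diagnosis of it is not accurate. You assert that the thickness hypothesis defining $E_n$ only secures the matching \emph{lower} bound of Theorem~\ref{Harish-Chandra}. In fact the \emph{upper} bound $\Phi(\gamma)\leq(a_2 n+b_2)e^{-hn/2}$ also relies on $\gamma\in E_n$: it is proved by combining Corollary~\ref{corollary:upperbound} with the estimate $\Psi(\gamma)_{\geq e^{-2n}}\lesssim n$, and this estimate is obtained from Lemma~\ref{lemma:log}, whose hypothesis (regularity $\nu(\{\eta:i(\eta,\xi_k)\leq N\})\lesssim N^{h/2}$ along an approximating sequence $\xi_k\to\xi_\gamma$) is verified via Proposition~\ref{proposition:pantscurveapp} and Theorem~\ref{theorem:regularity}, both of which need the direction $\xi_\gamma$ to lie in $U(\theta,\epsilon)$---that is, to be reachable by a geodesic that does not escape $\T_\epsilon(S)$. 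For an arbitrary $g\in\M(S)$ whose geodesic ray spends long stretches in thin parts (e.g.\ high powers of Dehn twists), the direction $\xi_g$ need not be in $U(\theta,\epsilon)$, need not even be filling, and the regularity estimate---hence the upper bound on $\Phi(g)$---is not a consequence of anything proved in the paper. Your suggested fixes (a thin-part shadow lemma, or Rafi's thick--thin decomposition) are plausible routes but would constitute the substance of a new estimate, not a routine patch. There is also a secondary technical point worth stating carefully: CHH applied to a cyclic vector with matrix coefficient in $\ell^{2+\epsilon}(G)$ for a fixed $\epsilon\in(0,2)$ gives weak containment of $\pi\otimes\pi$ (not $\pi$) in $\lambda_G$; passing from ``$\ell^{2+\epsilon}$ for every $\epsilon>0$'' to $\pi\prec\lambda_G$ requires the refined almost-square-integrability formulation, which holds but needs to be cited precisely. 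Given that the amenability route already yields the proposition immediately, your quantitative route is instructive but incomplete as written.
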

\begin{proof}
	We argue as \cite[Proposition 6.3]{garncarek2016boundary}. By the main theorem in \cite{Gabriella}, we need to verify that the action of $\M(S)$ on $\PMF(S)$ is amenable. This is in \cite[Proposition 8.1]{Hamenstaedt2009a} as a corollary of topological amenability of the action of $\M(S)$ on $\PMF(S)$.
\end{proof}

\noindent {\bf Notations.}~~~ We make some conventions that we will use in the sequel. 
\begin{itemize} 
	\item $S = S_g$: a genus $g \geq 2$, closed, oriented, connected surface.
	\item $h = 6g-6$.
	\item $o$: the base point in $\T(S)$ which is chosen to be generic in the sense that $Stab_{o}(\M(S)) = {id}$. Denote $\nu =\nu_o$ and the measure is normalized so that $\nu (\PMF(S)) = 1$.
	\item The projective measured foliation space $\PMF(S)$ is regarded as a subset of $\MF(S)$ by $\tau$ and an element $[\xi]$ in $\PMF(S)$ is then written as $\xi$, so both $[\xi]$ and $\xi$ will be called directions when there are no confusions.
	\item Fix arbitrary $\rho > 0$ and assume $n \gg \rho$.
	\item  $Pr_{y} : \T(S)-\{y\} \longrightarrow \PMF(S)$: the radial projection from $\T(S)$ to $\PMF(S)$ that assigns every point $z \in \T(S) -\{y\}$ to the vertical measured foliation of the unit quadratic differential defined by the oriented geodesic $[y,z]$. For $y = o$, we simply denote $Pr_{o}$ to be $Pr$.
	\item $B(y,R)$: the closed ball in $\T(S)$ of radius $R$ at $y$  with respect to the Teichm\"{u}ller distance $d = d_T$.
	\item $\asymp$: if $A(t),B(t)$ are two functions, we use the notation $A \asymp B$ to mean $\frac{A(t)}{B(t)}\rightarrow 1$ as $t \rightarrow \infty$ and $A \widetilde{<} B$ to mean $\lim_{t \rightarrow \infty}\frac{A(t)}{B(t)} \leq 1$. The notation $A \widetilde{>} B$ is defined similarly.  
	\item $A \sim_{\theta} B$: there is multiplicative constants $C_1 > 0,C_2 > 0$ depending on $\theta$ so that $$C_1A \leq B \leq C_2A.$$  $A \prec_{\theta} B$: there is a multiplicative constant $D=D(\theta) > 0$ so that $$A \leq DB.$$ And  $A \succ_{\theta} B$ is defined similarily.
	\item Denote $U = U(\theta,\epsilon)$ and $E_n = \mathcal{E}(\theta,\epsilon, n, o, \rho)$ in the sequel which is described before Lemma \ref{lemma:longsegment} with $\theta > 0.999$.
	\item $\xi_{\gamma} \in \PMF(S)$ (for $\gamma \in \M(S)-\{id\}$): the direction of the oriented geodesic segment $[o,\gamma \cdot o]$. 
\end{itemize}

\section{Exponential growth and Shadow lemma}

\subsection{Exponential growth.}
In this subsection, we will show that $|E_n|$ goes to infinity. In fact, we will show that $|E_n|$ grows exponentially. For any Borel subset $W$ of $\PMF(S)$, denote by $Sect_{W}$ the union of geodesics starting from $o$ and ending at $W$.  We first recall the following theorem in \cite{ABEM} in our setting. Let $$C(n,\rho) = \left\{\gamma \in \M(S): d_T(\gamma \cdot o,o) \in (n-\rho,n+\rho)\right\}.$$

\begin{theorem}[\cite{ABEM},Theorem 2.10]\label{ABEM2}
	Let $W$ and $V$ be two Borel subsets of $\PMF(S)$ with measure zero boundaries. Then as $R$ tends to $\infty$,
	\begin{displaymath}
	\begin{aligned}
	& \left|\{\gamma \in C(n,\rho): \gamma \cdot o \in Sect_{W} ~ \mbox{and} ~\gamma^{-1} \cdot o \in Sect_{V} \}\right|\\
	&\phantom{=\;} \asymp K e^{h n} \nu(W)\nu(V).
	\end{aligned}
	\end{displaymath}
	where $K$ is a constant depending on $g$, $\rho$ and $o$. In fact, using the notations in \cite{ABEM}, one has $K = \frac{2sinh(h\rho)\|\nu(\PMF(S))\|^2}{hm(\mathcal{M}_g)},$ where $m(\mathcal{M}_g)$ is the push forward of the Masur-Veech volume.
\end{theorem}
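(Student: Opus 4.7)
The strategy is to follow the framework developed by Athreya--Bufetov--Eskin--Mirzakhani for counting mapping class group orbits in $\T(S)$, refined to track directions at both endpoints of the connecting geodesic segment. The overall plan is to pass from the discrete count to an integral over $\T(S)$ using a fundamental domain for $\M(S)$, and then invoke mixing of the Teichm\"uller geodesic flow on moduli space (with respect to the Masur--Veech measure) to factorize the resulting integral into a product of the two sector measures times an annulus--volume factor.

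I would first set up "polar coordinates" at $o$ using the homeomorphism $\T(S)\setminus\{o\}\cong\PMF(S)\times\mathbb{R}_+$ given by $z\mapsto(Pr(z),d_T(o,z))$. Under this identification, the Hubbard--Masur/Thurston construction identifies the natural smooth measure on $\T(S)$, restricted to a sector $Sect_W$, with a measure that disintegrates (up to bounded distortion) as $e^{ht}\,dt\,d\nu$ on $\PMF(S)\times\mathbb{R}_+$; this is essentially the content of the ABEM volume asymptotic. In particular, the volume of the spherical shell $\{z\in Sect_W:d_T(o,z)\in(n-\rho,n+\rho)\}$ is asymptotic to $\tfrac{2\sinh(h\rho)}{h}\,\nu(W)\,e^{hn}$.

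Next, to incorporate the second condition $\gamma^{-1}\cdot o\in Sect_V$, observe that the direction at $o$ from $o$ to $\gamma^{-1}\cdot o$ is, after translating by $\gamma$, the \emph{incoming} direction of the geodesic $[o,\gamma\cdot o]$ at its far endpoint $\gamma\cdot o$. Thus the count in question becomes the number of oriented geodesic segments of length in $(n-\rho,n+\rho)$ in the moduli space $\mathcal{M}_g$ whose initial direction lifts to $W$ at $o$ and whose terminal direction lifts to $V$ at $\gamma\cdot o$ (with appropriate choice of fundamental domain). Writing this as an integral of $\mathbb{1}_W$ against the pushforward of $\mathbb{1}_V$ under the geodesic flow $g_n$ on the unit quadratic differential bundle over $\mathcal{M}_g$, and applying mixing of $g_n$ with respect to the Masur--Veech measure $m(\mathcal{M}_g)$, the correlation factors as $\nu(W)\nu(V)$ up to $o(1)$. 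Combining with the annulus--volume factor and the normalization $\|\nu(\PMF(S))\|^2/(h\,m(\mathcal{M}_g))$ coming from the fundamental--domain accounting gives the claimed asymptotic with $K=\tfrac{2\sinh(h\rho)\|\nu(\PMF(S))\|^2}{h\,m(\mathcal{M}_g)}$.

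The main obstacle is the usual one in these counting arguments: handling the non--compactness of $\mathcal{M}_g$ and the boundary behaviour of the sectors. For boundary effects, one approximates $\mathbb{1}_W$ and $\mathbb{1}_V$ from above and below by continuous compactly supported functions using $\nu(\partial W)=\nu(\partial V)=0$, so that mixing, which is a priori a statement for continuous test functions, can be upgraded to these indicator functions. For the thin part, one needs a priori recurrence estimates for the Teichm\"uller geodesic flow showing that orbits spending an anomalously long time in the thin part contribute negligibly to the count; this is precisely where the deep inputs (exponential mixing and recurrence of the Teichm\"uller flow) enter, and it is the genuinely nontrivial step in the argument.
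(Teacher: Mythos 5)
The paper does not prove this statement; it is recorded verbatim as a citation to Theorem~2.10 of \cite{ABEM} (Athreya--Bufetov--Eskin--Mirzakhani), so there is no in-paper proof to compare against. Your proposal is a reasonable high-level sketch of the actual ABEM argument: polar coordinates based on the Hubbard--Masur identification, an $e^{ht}\,dt\,d\nu$ disintegration giving the annulus factor $\tfrac{2\sinh(h\rho)}{h}e^{hn}$, reduction of the bi-sector condition to a correlation of the geodesic flow on the unit cotangent bundle over $\mathcal{M}_g$, decorrelation via mixing with respect to the Masur--Veech measure, and the two standard technical remedies (approximating $\mathds{1}_W,\mathds{1}_V$ by continuous functions using the null-boundary hypothesis, and controlling the thin part via recurrence of the Teichm\"uller flow). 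You correctly flag the thin-part estimates as the genuinely hard step, and the constant you extract matches the one the paper quotes. The one thing worth tightening is the phrase ``up to bounded distortion'' for the measure disintegration: ABEM actually need a precise asymptotic (not just two-sided bounds) for the sector volume, using the Hubbard--Masur function, in order to get the clean product form $K e^{hn}\nu(W)\nu(V)$ rather than a multiplicative constant that could absorb the sector dependence. As a blind reconstruction of a cited external theorem, though, this is essentially the right plan.
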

\begin{corollary}\label{corollary:exponentialgrowth}
	Let $n \gg 0 $ and $K$ be the constant in Theorem \ref{ABEM2}. Then $|E_n| \asymp Ke^{hn}$ (up to a subsequence). In particular, $\lim_{n \rightarrow \infty}|E_n| = \infty$. 
\end{corollary}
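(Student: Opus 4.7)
The plan is to sandwich $|E_n|$ between two quantities both asymptotic to $Ke^{hn}$ by feeding suitable subsets of $\PMF(S)$ into Theorem \ref{ABEM2}, using the exhaustion $U(\theta,\epsilon) = \bigcup_{R>0} U(R,\theta,\epsilon)$ to encode conditions (b) and (c) defining $E_n$. The key observation is that if $\xi_\gamma \in U(R,\theta,\epsilon)$, then by definition $Thk^{\%}_\epsilon[o,\xi_\gamma(m)] \geq \theta$ for every $m > R$; in particular, once $n$ is large enough that $\frac{1}{3h}\ln\ln n > R$, condition (c) for the ray $\xi_\gamma$ is automatic, and likewise for $\xi_{\gamma^{-1}}$. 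Condition (a) is just $\gamma \in C(n,\rho)$, while (b) is inherited from $U(R,\theta,\epsilon) \subseteq U(\theta,\epsilon)$.

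Concretely, for each fixed admissible $R$ define
$$E_n(R) := \{\gamma \in C(n,\rho) : \xi_\gamma \in U(R,\theta,\epsilon) \text{ and } \xi_{\gamma^{-1}} \in U(R,\theta,\epsilon)\};$$
by the preceding paragraph, $E_n(R) \subseteq E_n$ for all $n$ sufficiently large compared with $R$. Since $\gamma \cdot o \in Sect_{U(R,\theta,\epsilon)}$ is equivalent to $\xi_\gamma \in U(R,\theta,\epsilon)$ (and similarly for $\gamma^{-1}$), Theorem \ref{ABEM2} applied with $W = V = U(R,\theta,\epsilon)$ yields
$$|E_n(R)| \asymp Ke^{hn}\,\nu_o(U(R,\theta,\epsilon))^2,$$
provided $\nu_o(\partial U(R,\theta,\epsilon)) = 0$. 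Combined with the trivial upper bound $|E_n| \leq |C(n,\rho)| \asymp Ke^{hn}$, which is Theorem \ref{ABEM2} applied with $W = V = \PMF(S)$, this produces the sandwich
$$\nu_o(U(R,\theta,\epsilon))^2 \;\widetilde{<}\; \frac{|E_n|}{Ke^{hn}} \;\widetilde{<}\; 1.$$
Letting $R \to \infty$ along admissible values and invoking $\nu_o(U(\theta,\epsilon)) = 1$ collapses the sandwich to $|E_n| \asymp Ke^{hn}$, whence $|E_n| \to \infty$ a fortiori.

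The only technical obstacle is the boundary regularity hypothesis $\nu_o(\partial U(R,\theta,\epsilon)) = 0$, since the thickness proportion $Thk^{\%}_\epsilon[o,\xi_m]$ is a measurable but not manifestly continuous function of $\xi \in \PMF(S)$. This is handled by noting that the monotone family $\{U(R,\theta,\epsilon)\}_{R>0}$ can have only countably many values of $R$ at which $\nu_o(\partial U(R,\theta,\epsilon)) > 0$---precisely the jump discontinuities of the bounded nondecreasing function $R \mapsto \nu_o(U(R,\theta,\epsilon))$---so one may freely restrict to a sequence $R_k \to \infty$ avoiding them. This is the source of the ``up to a subsequence'' caveat in the statement of the corollary, and is the main point where the argument feels delicate; a clean alternative would be to replace each $U(R,\theta,\epsilon)$ by an inner-regular approximation by compacta $K_j \subset U(\theta,\epsilon)$ with $\nu_o(\partial K_j) = 0$ and $\nu_o(K_j) \to 1$, on each of which condition (c) holds uniformly.
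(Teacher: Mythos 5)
Your proof follows essentially the same route as the paper's: bound $|E_n|$ above by $|C(n,\rho)| \asymp Ke^{hn}$, bound below by applying Theorem \ref{ABEM2} with $W = V = U(R,\theta,\epsilon)$ for large $R$, observe that the resulting set sits inside $E_n$ once $\frac{1}{3h}\ln\ln n > R$, and let $R\to\infty$ via $\nu_o(U(\theta,\epsilon))=1$. The paper phrases the lower bound with explicit $\delta_1,\delta_2$ bookkeeping, but the substance is the same sandwich.

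The one place where your reasoning goes astray is the justification of the boundary-regularity requirement. You assert that the set of $R$ with $\nu_o(\partial U(R,\theta,\epsilon))>0$ consists precisely of the jump discontinuities of the nondecreasing function $R\mapsto\nu_o(U(R,\theta,\epsilon))$. There is no such equivalence: a jump of the measure function need not force positive boundary measure at that $R$, and positive boundary measure need not produce a jump. The usual disjointness-of-boundaries trick that makes this work for metric balls (where $\partial B(x,r)\subseteq\{d(\cdot,x)=r\}$) is unavailable here, since the $U(R,\theta,\epsilon)$ are merely Borel and their boundaries for different $R$ can overlap in a nested family. The paper itself does not prove this step either --- it simply asserts that $R$ with $\nu_o(\partial U(R,\theta,\epsilon))=0$ and $\nu_o(U(R,\theta,\epsilon))>1-\delta_1$ can be chosen, which is ultimately the source of the ``up to a subsequence'' caveat --- so you have not introduced a new gap, but your stated reason for it is incorrect. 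The inner-regular approximation by compacta that you sketch at the end is the right way to make this step rigorous; you should lead with that rather than the jump-discontinuity claim.
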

\begin{proof}
	As $E_n \subset C(n,\rho)$ and, by Theorem \ref{ABEM2}, $|C(n,\rho)| \asymp Ke^{hn}$, it is obvious that $|E_n| ~\widetilde{<}~ Ke^{hn}$. We now show that $ |E_n| ~\widetilde{>}~ Ke^{hn}$. Recall that $U(\theta,\epsilon) = \cup_{R> 0}U(R,\theta,\epsilon)$ with $\nu(U(\theta,\epsilon)) = 1$ and $U(S,\theta,\epsilon) \subset U(T,\theta,\epsilon)$ for $T > S$. Let $\delta_1 > 0$ small enough and choose $R \gg 0$ such that $$1-\delta_1 \leq \nu(U(R, \theta,\epsilon)) \leq 1, ~\nu(\partial U(R,\theta,\epsilon)) = 0.$$ By Theorem \ref{ABEM2} again, for any $\delta_2 > 0$ small enough, there exists $N(\delta_2)$ so that, whenever $n \geq N(\delta_2)$ and $\frac{1}{3h}\ln \ln n > R$, 
	\begin{displaymath}
	\begin{aligned}
	&\left|\{\gamma \in C(n,\rho): \gamma \cdot o \in Sect_{U(R,\theta,\epsilon)} ~ \mbox{and} ~\gamma^{-1} \cdot o \in Sect_{U(R,\theta,\epsilon)} \}\right|\\
	& \geq Ke^{-\delta_2} e^{hn} \left(\nu(U(R,\theta,\epsilon))\right)^2\\
	& \geq Ke^{-\delta_2}(1-\delta_1)^2e^{hn}.
	\end{aligned}
	\end{displaymath}
	On the other hand, by the choice of $n$ and the definition of $U(R, \theta,\epsilon)$, 
	
	\begin{displaymath}
	\begin{aligned}
	&\{\gamma \in C(n,\rho): \gamma \cdot o \in Sect_{U(R,\theta,\epsilon)} ~ \mbox{and} ~\gamma^{-1} \cdot o \in Sect_{U(R,\theta,\epsilon)} \} \\
	& \subset E_n.
	\end{aligned}
	\end{displaymath}
	Therefore, we have	$|E_n| \geq Ke^{-\delta_2}(1-\delta_1)^2e^{hn}$. As $\delta_1$ and $\delta_2$ can be arbitrary small, one has $|E_n| ~\widetilde{>}~ Ke^{hn}$.
\end{proof}

\begin{corollary}\label{weakconvergence}
	For all Borel subsets $W,V \subset \PMF(S)$ such that $\nu(\partial W) = \nu(\partial V) = 0$, $$\limsup_{n \rightarrow \infty}\frac{1}{|E_n|}|\left\{\gamma \in E_n: Pr(\gamma^{-1})\in W ~\mbox{and}~Pr(\gamma)\in V \right\}| \leq \nu(W)\nu(V).$$
\end{corollary}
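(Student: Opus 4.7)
The statement is essentially a ratio of two asymptotic counts, so the plan is to upper bound the numerator by the larger (unconstrained) count in $C(n,\rho)$ provided by Theorem \ref{ABEM2}, and to use the matching exponential lower bound for $|E_n|$ from Corollary \ref{corollary:exponentialgrowth} as the denominator. The key is simply to translate the conditions on $Pr(\gamma)$ and $Pr(\gamma^{-1})$ into sector conditions on $\gamma\cdot o$ and $\gamma^{-1}\cdot o$.

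First I would record the tautological identity: by the definition of the radial projection $Pr = Pr_o$, one has $Pr(\gamma) \in V$ if and only if $\gamma \cdot o \in \mathrm{Sect}_V$, and likewise $Pr(\gamma^{-1}) \in W$ if and only if $\gamma^{-1} \cdot o \in \mathrm{Sect}_W$. Consequently
$$\{\gamma \in E_n : Pr(\gamma^{-1}) \in W,\ Pr(\gamma) \in V\} \subset \{\gamma \in C(n,\rho) : \gamma^{-1}\cdot o \in \mathrm{Sect}_W,\ \gamma\cdot o \in \mathrm{Sect}_V\},$$
using only that $E_n \subset C(n,\rho)$ by condition (a) in the definition of $\mathcal{E}(\theta,\epsilon,n,o,\rho)$.

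Next, since $\nu(\partial W) = \nu(\partial V) = 0$, Theorem \ref{ABEM2} applies and gives
$$\bigl|\{\gamma \in C(n,\rho) : \gamma^{-1}\cdot o \in \mathrm{Sect}_W,\ \gamma\cdot o \in \mathrm{Sect}_V\}\bigr| \asymp K e^{hn}\nu(W)\nu(V),$$
while Corollary \ref{corollary:exponentialgrowth} supplies $|E_n| \asymp K e^{hn}$ along the relevant subsequence. Dividing these two asymptotics and taking $\limsup$,
$$\limsup_{n\to\infty} \frac{1}{|E_n|}\bigl|\{\gamma \in E_n : Pr(\gamma^{-1})\in W,\ Pr(\gamma)\in V\}\bigr| \leq \limsup_{n\to\infty} \frac{Ke^{hn}\nu(W)\nu(V)(1+o(1))}{Ke^{hn}(1+o(1))} = \nu(W)\nu(V),$$
which is exactly the desired inequality.

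There is no real obstacle here: the only subtlety worth flagging is that the asymptotic equivalence $|E_n| \asymp Ke^{hn}$ from Corollary \ref{corollary:exponentialgrowth} is taken along a subsequence, so the conclusion of the corollary should be understood along that same subsequence (which is harmless for verifying condition (2) of Theorem \ref{criterion2}, as the main theorem statement already allows passing to a subsequence). In particular no new geometric input is required; the corollary is a formal consequence of the ABEM equidistribution statement and the two-sided exponential growth of $E_n$ inside $C(n,\rho)$.
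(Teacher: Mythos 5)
Your proof is correct and takes essentially the same route as the paper: bound the count over $E_n$ by the count over $C(n,\rho)$, apply Theorem \ref{ABEM2} to the latter, and divide by the matching lower bound $|E_n| \asymp K e^{hn}$ from Corollary \ref{corollary:exponentialgrowth}. Your remark about the subsequence caveat is also consistent with the paper's usage.
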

\begin{proof}
	By Corollary \ref{corollary:exponentialgrowth} and Theorem \ref{ABEM2}, $|E_n| \asymp |C(n,\rho)|$. Notice that $Pr(\gamma) \in V$ if and only if $\gamma \cdot o \in Sect_V$. Hence,
	\begin{displaymath}
	\begin{aligned}
	&\limsup_{n \rightarrow \infty}\frac{1}{|E_n|}\left|\left\{\gamma \in E_n: Pr(\gamma^{-1})\in W ~\mbox{and}~Pr(\gamma)\in V \right\}\right|\\
	&\leq \limsup_{n \rightarrow \infty}\frac{1}{|E_n|}\left|\left\{\gamma \in C(n,\rho): Pr(\gamma^{-1})\in W ~\mbox{and}~Pr(\gamma)\in V \right\}\right|\\
	&= \limsup_{n \rightarrow \infty}\frac{|C(n,\rho)|}{|E_n|}\frac{1}{|C(n,\rho)|}\left|\left\{\gamma \in C(n,\rho): Pr(\gamma^{-1})\in W ~\mbox{and}~Pr(\gamma)\in V \right\}\right|\\
	&\leq \nu(W)\nu(V).
	\end{aligned}
	\end{displaymath}
	
\end{proof}

\subsection{Shadow lemma.}
The following Shadow lemma will be used in the proof of uniform boundedness (Section \ref{subsection:uniformboundedness}).
\begin{lemma}[\cite{Yang}, Lemma 6.3]\label{lemma:Shodowlemma}
	There exists $R_0 > 0$ such that for every $R \geq  R_0$, there is a real number $C \geq 1$ depending on $R$, such that for all $n \gg \rho$ and $g \in E_n$,  $$  \frac{1}{C} \exp{(-h d(o,g \cdot o))} \leq \nu(Pr(B(g \cdot o,R))) \leq C\exp{(-h d(o, g\cdot o))}. $$ 

\end{lemma}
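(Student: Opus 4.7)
The plan is to run the Patterson--Sullivan shadow argument, exploiting that the Thurston measure $\nu = \nu_o$ is a conformal density of dimension $h$ on $\PMF(S)$ for the action of $\M(S)$. The first step is a change of base point. Since $g^{-1}$ sends the shadow $Pr(B(g\cdot o, R))$ to $Pr_{g^{-1}\cdot o}(B(o, R))$ (the radial projection equivariantly intertwines the $\M(S)$-action on $\T(S)$ and on $\PMF(S)$), and $\nu_o(A) = \nu_{g^{-1}\cdot o}(g^{-1} A)$ by quasi-invariance, the Radon--Nikodym formula recalled before the lemma yields
\[
\nu(Pr(B(g\cdot o, R))) = \int_{Pr_{g^{-1}\cdot o}(B(o, R))} \left(\frac{\E_o(\xi)}{\E_{g^{-1}\cdot o}(\xi)}\right)^{h/2} d\nu_o(\xi).
\]

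The second step is to show that the integrand is comparable, within constants depending only on $R$, to $e^{-h\, d(o, g\cdot o)}$ throughout the shadow. Fix $\xi \in Pr_{g^{-1}\cdot o}(B(o, R))$; the Teichm\"{u}ller ray from $g^{-1}\cdot o$ in direction $\xi$ meets $B(o, R)$ at some point $y$ with $d(y, g^{-1}\cdot o) = d(o, g\cdot o) + O(R)$. Along its own vertical foliation's geodesic the extremal length decays as $e^{-2t}$, so $\E_y(\xi) = e^{-2 d(y, g^{-1}\cdot o)} \E_{g^{-1}\cdot o}(\xi) \sim_R e^{-2 d(o, g\cdot o)} \E_{g^{-1}\cdot o}(\xi)$. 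Kerckhoff's formula (Lemma \ref{lemma:Kerckhoffformula}) with $d(o, y) \le R$ controls $\E_o(\xi)/\E_y(\xi)$ by $e^{\pm 2R}$. Combining these two comparisons yields the desired two-sided bound on the cocycle, and the upper bound of the lemma is then immediate because $\nu_o(\PMF(S)) = 1$.

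The main obstacle is the lower bound, which reduces to showing $\nu_o(Pr_{g^{-1}\cdot o}(B(o, R))) \succ_R 1$ uniformly in $g \in E_n$. This is where the design of $E_n$ becomes critical: by Lemma \ref{lemma:longsegment} applied to $g^{-1}$, the geodesic from $g^{-1}\cdot o$ toward $o$ starts with a segment of length $\frac{1}{3h}\ln \ln n$ having proportion at least $\theta$ in $\T_\epsilon(S)$. Applying Rafi's fellow-traveling theorem (Theorem \ref{theorem:fellowtravelling}) to geodesics issuing from $g^{-1}\cdot o$ whose initial portion stays in the thick part and eventually lands in $B(o, R)$ shows that $Pr_{g^{-1}\cdot o}(B(o, R))$ contains a definite open neighborhood (in intersection-number terms) of the direction from $g^{-1}\cdot o$ toward $o$, whose size is controlled only in terms of $R$ and $\epsilon$. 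Transferring the estimate via the quasi-invariance back to a shadow from $o$ of a ball around a uniformly thick companion point and applying the ABEM sector-counting (Theorem \ref{ABEM2}) gives the desired uniform lower bound. The cleanest alternative is to invoke Yang's shadow lemma \cite{Yang} directly: $\M(S) \curvearrowright \T(S)$ satisfies the contracting-element hypotheses required there, which is the route taken in the paper.
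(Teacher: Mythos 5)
The paper does not actually prove this lemma: it imports it wholesale from Yang's work on contracting elements, so there is no internal proof to compare against. Your proposal attempts a genuine proof in the Patterson--Sullivan style, so let me assess it on those terms.

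Your change-of-basepoint step and the cocycle comparability are correct. Writing $B = Pr_{g^{-1}\cdot o}(B(o,R))$, the equivariance $g^{-1}\cdot Pr_o(B(g\cdot o,R)) = B$ holds, and the pushforward identity gives $\nu(Pr(B(g\cdot o,R))) = \int_B \left(\E_o(\xi)/\E_{g^{-1}\cdot o}(\xi)\right)^{h/2}d\nu_o$. For $\xi$ in the shadow, the ray from $g^{-1}\cdot o$ in direction $\xi$ hits $B(o,R)$ at a point $y$; the exact decay $\E_y(\xi) = e^{-2d(g^{-1}\cdot o,\, y)}\E_{g^{-1}\cdot o}(\xi)$ along the defining ray and Kerckhoff's formula on the short segment $[o,y]$ pin the Radon--Nikodym derivative to $e^{\pm 2Rh}\,e^{-h\,d(o,\,g\cdot o)}$. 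This gives the upper bound of the lemma outright, with $C = e^{2Rh}$.

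The lower bound is where the argument has a genuine gap. You correctly reduce it to showing $\nu_o\left(Pr_{g^{-1}\cdot o}(B(o,R))\right) \succ_R 1$ uniformly over $g \in E_n$, and you correctly identify that the designed thick initial segment (via Lemma \ref{lemma:longsegment} applied to $g^{-1}$) must enter here. But the mechanism you propose is not the right one. Rafi's fellow-traveling theorem compares \emph{finite} geodesic segments $[g^{-1}\cdot o, y]$ for $y \in B(o,R)$ to $[g^{-1}\cdot o, o]$; it controls Hausdorff proximity of segments, not the $\nu_o$-measure of the set of limiting directions, and no quantitative measure estimate falls out of it. Nor is the ABEM sector-counting estimate the relevant tool: that result compares lattice-point counts in sectors to $e^{hn}\nu(W)\nu(V)$ and is, if anything, downstream of (or parallel to) the shadow lemma rather than a route to it, so invoking it here risks circularity. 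The missing ingredient is essentially the thin-triangle theorem (Theorem \ref{theorem:DDM}) applied to ideal triangles with vertices $o$, $g^{-1}\cdot o$, and a boundary direction $\eta$: the thick subsegment near $g^{-1}\cdot o$ forces the side $[g^{-1}\cdot o, \eta)$ to pass within bounded distance of $o$ for all $\eta$ outside a small $\nu_o$-exceptional set around $\xi_{g^{-1}}$, and quantifying that exceptional set against the regularity estimate (Theorem \ref{theorem:regularity}) closes the bound. This is precisely the role the contracting-element hypothesis plays in Yang's proof. You do flag at the end that citing Yang is the clean route and that this is what the paper does, which is correct; but the sketch as written would not yield the lower bound without replacing the fellow-traveling/ABEM step with a thin-triangle argument.
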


Recall that $U$ has a full measure. We need a lemma that relates Busemann functions to extremal lengths. Recall that if $(X,d_{X})$ is a metric space and $\xi$ is a geodesic ray starting from a point $x_0 \in X$, then {\it the Busemann function} associated to the geodesic ray $\xi$ is the function $b_{\xi}$ on $X$ defined by $$b_{\xi}: x \mapsto \lim_{t \rightarrow \infty}\left(d_{X}(x, \xi(t))-t\right).$$  For $(X = \T(S),d_{X}=d)$ and a geodesic ray $\xi$ starting from $o$, one has
\begin{lemma}[\cite{Walsh09}, \cite{su2018horospheres}]\label{busemann}
	If $[\xi]$ is uniquely ergodic, then the Busemann function associated to the geodesic ray in the direction $[\xi]$ is $$\forall x \in \T(S),  b_{[\xi]}(x) = \frac{1}{2}\ln \left(\frac{\E_x(\xi)}{\E_{o}(\xi)}\right).$$
\end{lemma}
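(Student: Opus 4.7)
The plan is to sandwich $d_T(x, g_t) - t$ between matching upper and lower bounds, both coming from Kerckhoff's formula (Lemma \ref{lemma:Kerckhoffformula}) applied with carefully chosen test foliations. Note first that $t \mapsto d_T(x, g_t) - t$ is monotone non-increasing (by the triangle inequality with the arc-length parametrization of the geodesic ray) and bounded below by $-d_T(o, x)$, so the limit defining $b_{[\xi]}(x)$ exists without qualification; it therefore suffices to obtain matching asymptotic bounds.

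For the lower bound, since $g_t$ is the Teichm\"uller ray from $o$ in direction $\xi$, the supremum in Kerckhoff's formula computing $d_T(o, g_t) = t$ is attained at $\lambda = \xi$, giving $\E_{g_t}(\xi) = e^{-2t}\E_o(\xi)$. Feeding $\lambda = \xi$ into Kerckhoff's formula for $d_T(x, g_t)$ yields
$$d_T(x, g_t) \;\geq\; \tfrac{1}{2}\ln \frac{\E_x(\xi)}{\E_{g_t}(\xi)} \;=\; t + \tfrac{1}{2}\ln \frac{\E_x(\xi)}{\E_o(\xi)}.$$
For the upper bound, let $[\eta_t] \in \PMF(S)$ attain the supremum in Kerckhoff's formula for $d_T(x, g_t)$ (such a maximizer exists by compactness of $\PMF(S)$ and continuity of the projectively invariant ratio $\E_x/\E_{g_t}$), so $d_T(x, g_t) = \frac{1}{2}\ln(\E_x(\eta_t)/\E_{g_t}(\eta_t))$. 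Applying Kerckhoff's formula at the pair $(o, g_t)$ with the \emph{same} test foliation $\eta_t$ gives $t \geq \frac{1}{2}\ln(\E_o(\eta_t)/\E_{g_t}(\eta_t))$. Subtracting produces the projectively invariant bound
$$d_T(x, g_t) - t \;\leq\; \tfrac{1}{2}\ln \frac{\E_x(\eta_t)}{\E_o(\eta_t)}.$$

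The main obstacle is showing $[\eta_t] \to [\xi]$ in $\PMF(S)$, after which continuity of extremal length on $\MF(S)$ delivers the matching upper bound. Geometrically, $[\eta_t]$ is the initial direction at $x$ of the Teichm\"uller geodesic aimed at $g_t$, and the claim is that this radial projection from $x$ is continuous at the uniquely ergodic point $[\xi]$ along the ray $g_t$. I would proceed by extracting a subsequential limit $[\eta_{t_k}] \to [\eta_\infty]$ in the compact space $\PMF(S)$ and arguing $[\eta_\infty] = [\xi]$: the Teichm\"uller ray from $x$ in direction $[\eta_{t_k}]$ passes through $g_{t_k}$, which goes to infinity along the ray from $o$ in direction $[\xi]$, so a fellow-travelling argument (Theorem \ref{theorem:fellowtravelling}), combined with the thick-part control on $[o, g_{t_k}]$ coming from Masur's criterion (Lemma \ref{lemma:masurcriterion}), forces the ray from $x$ in direction $[\eta_\infty]$ to track $g_t$ at large time and hence to land at $[\xi]$ in the Teichm\"uller compactification. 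Lemma \ref{lemma:uniquelyergodic} (since $i([\eta_\infty],[\xi]) = 0$ at a uniquely ergodic point) then forces $[\eta_\infty] = [\xi]$; Lemma \ref{smoothness} provides an alternative route via continuity of the $\M(S)$-action at uniquely ergodic points. All subsequential limits thus agree, so $[\eta_t] \to [\xi]$ and the two bounds close to give the stated formula.
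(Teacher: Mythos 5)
The paper gives no proof of its own here---it cites Walsh \cite{Walsh09} and Su--Wang \cite{su2018horospheres}---so I can only assess your argument on its merits. Your sandwich structure is sound: the lower bound $d_T(x,g_t)-t \geq \tfrac{1}{2}\ln(\E_x(\xi)/\E_o(\xi))$ via testing Kerckhoff's formula with $\xi$ itself (using $\E_{g_t}(\xi)=e^{-2t}\E_o(\xi)$), the upper bound $d_T(x,g_t)-t \leq \tfrac{1}{2}\ln(\E_x(\eta_t)/\E_o(\eta_t))$ via the Kerckhoff maximizer $\eta_t$ at $(x,g_t)$, the monotonicity/boundedness observations, and the reduction to showing $[\eta_t]\to[\xi]$ are all correct.

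The gap is in your justification of $[\eta_t]\to[\xi]$. You invoke Lemma~\ref{lemma:masurcriterion} to get ``thick-part control on $[o,g_{t_k}]$,'' but that lemma runs the \emph{wrong way}: it deduces unique ergodicity \emph{from} recurrence to a thick part, and the converse is false---there exist uniquely ergodic directions whose Teichm\"uller ray eventually leaves $\T_\epsilon(S)$ for every $\epsilon>0$ (Cheung, and later Cheung--Masur, produced such examples). Since Theorem~\ref{theorem:fellowtravelling} requires its four reference points to lie in a fixed $\T_\epsilon(S)$, your fellow-travelling step breaks down precisely for such non-recurrent uniquely ergodic $[\xi]$, so the sketch does not cover the full generality of the lemma. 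What you actually need is the basepoint-independence of Teichm\"uller visual convergence at uniquely ergodic boundary points---i.e.\ that if $g_t\to[\xi]$ in $\overline{\T(S)}$ from basepoint $o$, then $Pr_x(g_t)\to[\xi]$ as well. That is exactly Masur's theorem in \cite{Masur82} (the same source behind Lemma~\ref{smoothness}), it holds for all uniquely ergodic $[\xi]$ without any recurrence hypothesis, and it closes the argument by continuity of $[\eta]\mapsto \E_x(\eta)/\E_o(\eta)$ on $\PMF(S)$. Replacing the fellow-travelling/Masur-criterion sketch with a direct appeal to that result repairs the proof.
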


\section{Harish-Chandra estimates}\label{Section:Harish-Chandraestimations}
This section is devoted to prove the following Harish-Chandra estimates.
\begin{theorem}\label{Harish-Chandra}
	Given $n \gg \rho $. There exist $a_1 > 0, a_2 > 0,b_1, b_2,c_1 >0$ depending on $\epsilon,o, g,\theta,\rho$ such that $$ \forall \gamma \in E_n,~
	(a_1n-c_1\ln \ln n +b_1)e^{-\frac{h}{2}n} \leq \Phi(\gamma) \leq (a_2n+b_2)e^{-\frac{h}{2}n}. $$
\end{theorem}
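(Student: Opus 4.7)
The plan is to evaluate the Harish-Chandra integral by decomposing $\PMF(S)$ into annular shadows along the geodesic $[o,\gamma\cdot o]$ and estimating the integrand on each annulus via Busemann functions, in the spirit of the Patterson--Sullivan calculation for hyperbolic groups.

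First I would rewrite
$$\Phi(\gamma)=\int_{\PMF(S)}\left(\frac{\E_o(\tau(\xi))}{\E_{\gamma\cdot o}(\tau(\xi))}\right)^{h/4} d\nu_o(\xi)=\int_{\PMF(S)}\E_{\gamma\cdot o}(\tau(\xi))^{-h/4}\,d\nu_o(\xi),$$
using $\E_o(\tau(\xi))=1$. Since $\nu_o$ is supported on uniquely ergodic directions, Lemma \ref{busemann} lets me replace $\tfrac12\ln \E_{\gamma\cdot o}(\tau(\xi))$ by the Busemann value $b_{\tau(\xi)}(\gamma\cdot o)$, reducing to
$$\Phi(\gamma)=\int_{\PMF(S)} e^{-(h/2)\,b_{\tau(\xi)}(\gamma\cdot o)}\,d\nu_o(\xi).$$

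Next I would set up the annular decomposition. Fix $R\ge R_0$ (so that the Shadow Lemma \ref{lemma:Shodowlemma} applies) and let $x_t$ be the point on $[o,\gamma\cdot o]$ at distance $t$ from $o$. Set $Sh_t=Pr(B(x_t,R))$. Whenever $x_t\in\T_\epsilon(S)$, Lemma \ref{lemma:Shodowlemma} gives $\nu_o(Sh_t)\asymp e^{-ht}$ and hence $\nu_o(Sh_t\setminus Sh_{t+1})\asymp e^{-ht}$. On the other hand, for $\xi\in Sh_t\setminus Sh_{t+1}$ the Teichm\"uller ray from $o$ to $\tau(\xi)$ fellow-travels the segment $[o,x_t]$ and then diverges, so Theorem \ref{theorem:fellowtravelling} plus a direct unfolding of the Busemann definition gives
$$b_{\tau(\xi)}(\gamma\cdot o)=n-2t+O(1),$$
provided the relevant portion of $[o,\gamma\cdot o]$ lies in the thick part. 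Consequently the contribution of the $t$-th annulus is
$$\int_{Sh_t\setminus Sh_{t+1}} e^{-(h/2)\,b_{\tau(\xi)}(\gamma\cdot o)}\,d\nu_o \asymp e^{-hn/2}e^{ht}\cdot e^{-ht}=e^{-hn/2},$$
independently of $t$. Summing over $t=0,1,\dots,n$ yields the upper bound $\Phi(\gamma)\le(a_2n+b_2)e^{-hn/2}$. For the lower bound, I restrict the sum to integer times $t$ for which $x_t\in\T_\epsilon(S)$; the definition of $E_n=\mathcal{E}(\theta,\epsilon,n,o,\rho)$ together with Theorem \ref{theorem:DDM} and Lemma \ref{lemma:longsegment} guarantees at least $\theta n -O(1)$ such integers, and this keeps at least $a_1 n$ annuli of uniform contribution. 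The $-c_1\ln\ln n$ correction in the lower bound is absorbed by throwing away the initial and terminal segments of length $\tfrac{1}{3h}\ln\ln n$ appearing in the definition of $E_n$, where the Shadow Lemma constants are not yet stabilized.

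The main obstacle is that this whole argument is modeled on what happens in a genuinely Gromov-hyperbolic space, whereas $(\T(S),d_T)$ is not hyperbolic. Making the crucial Busemann identity $b_{\tau(\xi)}(\gamma\cdot o)=n-2t+O(1)$ rigorous, and making the shadow-annulus measure estimate uniform in $t$ and $\gamma$, requires the thick-proportion assumption built into $E_n$, Dowdall--Duchin--Masur's thin-triangles result (Theorem \ref{theorem:DDM}), Rafi's thick-part fellow-traveling (Theorem \ref{theorem:fellowtravelling}), and--as flagged in the introduction--the Masur--Minsky pants-curve machinery, which permits one to substitute the extremal lengths $\E_{\gamma\cdot o}(\tau(\xi))$ by tractable intersection-number expressions so that the Busemann estimates above can actually be carried out by integration over combinatorial data rather than against a metric that we do not have good regularity for.
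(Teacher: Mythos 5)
Your sketch follows the shadow--annulus decomposition of Boyer and Bader--Muchnik, which the paper explicitly and deliberately avoids. The paper's proof has a different architecture. It first converts the integrand $\E_{\gamma\cdot o}(\tau(\xi))^{-h/4}$ into an intersection-number expression $\bigl(e^n\,i(\xi_\gamma,\eta)\bigr)^{-h/2}$: the upper bound uses Minsky's inequality (Lemma \ref{Minskyinequailty}, Corollary \ref{corollary:upperbound}), and the lower bound uses a ``reverse Minsky'' inequality derived from the Dowdall--Duchin--Masur thin-triangle theorem via the thick terminal segment of length $\tfrac{1}{3h}\ln\ln n$ built into $E_n$ (Lemma \ref{lemma:inverseminsky}, Corollary \ref{corollary:lowerbound}), valid only for $i(\xi_\gamma,\eta)\geq F\ln n\,e^{-2n}$ -- which is exactly where the $-c_1\ln\ln n$ loss enters. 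Then the resulting integral $\Psi(\gamma)_{\geq N}$ is evaluated as $\asymp -\ln N$ (Lemma \ref{lemma:log}) after establishing the scaling $\nu(\{i(\cdot,\xi)\leq M\})\asymp M^{h/2}$ for thick pants-curve approximants of $\xi_\gamma$ and passing to a limit (Proposition \ref{proposition:pantscurveapp}, Theorem \ref{theorem:regularity}, Lemma \ref{lemma:approximation}).

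There is a genuine gap in your plan, most visibly in the upper bound. Summing uniform contributions $\asymp e^{-hn/2}$ over the annuli $Sh_t\setminus Sh_{t+1}$ for $t=0,\dots,n$ requires, for every $t$, both the measure estimate $\nu(Sh_t\setminus Sh_{t+1})\lesssim e^{-ht}$ and the lower Busemann estimate $b_{\tau(\xi)}(\gamma\cdot o)\geq n-2t-O(1)$ on the $t$-th annulus. Lemma \ref{lemma:Shodowlemma} is quoted only at orbit points $g\cdot o$ with $g\in E_n$, not at arbitrary interior points $x_t$ of the geodesic, and the definition of $E_n$ does not force the middle of $[o,\gamma\cdot o]$ into the thick part. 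More seriously, the lower Busemann estimate on an annulus is a thin-triangle statement for $\triangle(o,\tau(\xi),\gamma\cdot o)$; in the Teichm\"uller metric this fails without thickness control, and in $\T(S)$ the shadows $Sh_t$ need not nest in $t$, so $\xi\in Sh_t\setminus Sh_{t+1}$ does not by itself give the required lower bound on $\E_{\gamma\cdot o}(\tau(\xi))$ (the na\"ive triangle inequality runs the wrong way). The paper's upper bound avoids all this because Minsky's inequality is universal and requires no thickness at all. You partly acknowledge these difficulties in your closing paragraph, but deferring their resolution to ``the Masur--Minsky pants-curve machinery'' mislocates where that machinery actually operates: in the paper it is used only to establish the regularity $\nu(\{i(\cdot,\xi)\leq M\})\asymp M^{h/2}$, not to repair shadow or Busemann estimates along potentially thin subsegments. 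To make your route rigorous you would need to prove a shadow lemma and a shadow-nesting statement that hold on the thick fraction of $[o,\gamma\cdot o]$ and then bound the contribution of the thin fraction separately; as written, the proposal does not do this.
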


Recall that
\begin{displaymath}
\begin{aligned}
& \Phi(\gamma) = <\pi_{\nu}(\gamma)\mathds{1}_{\PMF(S)}, \mathds{1}_{\PMF(S)}>_{L^2(\PMF(S),\nu)}\\
&\phantom{=\;\;} = \int_{\PMF(S)}\left(\frac{\E_{o}(\xi)}{\E_{\gamma.o}(\xi)}\right)^{\frac{h}{4}}d\nu([\xi]).
\end{aligned}
\end{displaymath}
\begin{remark}
	\begin{itemize}
		\item[1.] In \cite{Boyer17}, the left side is of the form $(an+b)e^{-\frac{\alpha}{2}n}$. However, it seems that some other terms like $\ln \ln n$ should be added for mapping class groups if we require $\lim_n \frac{|E_n|}{C(n,\rho)} = 1$.
		\item[2.] The following oberservation will be useful, namely $\Phi(\gamma) \asymp ne^{-\frac{hn}{2}}$.
	\end{itemize}	
	
\end{remark}

The proof is divided into several lemmas and will be given at the end of this section.

\subsection{Reduction to intersection numbers.}
By our convention, for every $\xi \in \PMF(S)$, one has $\E_{o}(\xi) = 1$, we then have $$\Phi(\gamma) = \int_{\PMF(S)}\left(\frac{1}{\E_{\gamma.o}(\zeta)}\right)^{\frac{h}{4}}d\nu(\zeta).$$
\noindent Let $\xi_{\gamma}$ be the direction of $[o,\gamma \cdot o]$. In order to estimate $\Phi(\gamma)$, we will relate it to the following integrations on intersection numbers:
$$\Psi(\gamma) = \int_{\PMF(S)}\left(\frac{1}{i(\xi_{\gamma},\eta)}\right)^{\frac{h}{2}}d\nu(\eta).$$

Denote $$\Psi(\gamma)_{\geq A} = \int_{\{\eta \in \PMF(S): i(\xi_{\gamma}, \eta) \geq A\}}\left(\frac{1}{i(\xi_{\gamma},\eta)}\right)^{\frac{h}{2}}d\nu(\eta).$$

The first step is to bound $\Phi(\gamma)$ from above. This can be easily done by Minsky's inequality. Namely,
\begin{lemma}[Minsky's inequality \cite{Minsky93}]\label{Minskyinequailty}
	Let $\xi$ and $\eta$ be two measured foliations on $S$ and $x \in \T(S)$, then $$i^2(\xi, \eta) \leq \E_{x}(\xi) \E_{x}(\eta),$$ where the equilty holds if and only if there is a qudratic differential $q$ so that the vertical measured foliation of $q$ on $X$ is $\xi$ and the horizontal measured foliation is $\eta$.
\end{lemma}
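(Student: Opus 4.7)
The plan is to prove Minsky's inequality by relating $i(\xi,\eta)$ and $\E_x(\eta)$ through the singular flat metric of the Hubbard--Masur quadratic differential whose vertical foliation is $\xi$. First I would invoke the Hubbard--Masur theorem recalled earlier in the paper: for the given Riemann surface $X$ representing $x \in \T(S)$ and every $\xi \in \MF(S)$, there exists a unique $q \in QD(X) \setminus \{0\}$ with $\mathcal{V}(q) = \xi$, and its norm satisfies the classical identity $\|q\| = \E_x(\xi)$ (this is consistent with the extremal length discussion in \cite{ABEM} and can be verified directly from the analytic definition of extremal length, using natural $q$-coordinates and the sup characterization applied to the metric $|q|^{1/2}|dz|$).

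Next I would introduce the singular flat conformal metric $\rho_q = |q|^{1/2}|dz|$ on $X$, defined away from the finite zero set of $q$. Its total area is $\int_X |q| = \|q\| = \E_x(\xi)$. The key pointwise inequality is that, in natural $q$-coordinates $z = x + iy$ where $q = dz^2$, the vertical transverse measure satisfies $d\xi = |dx| \leq |dz| = \rho_q$. Integrating along any rectifiable arc $\alpha$ yields $\int_\alpha d\xi \leq \ell_{\rho_q}(\alpha)$. For a weighted simple closed curve $c\gamma$ with $\gamma \in \mathcal{C}(S)$, choosing the $\rho_q$-shortest representative in $[\gamma]$ and applying this arc-inequality gives $i(\xi, c\gamma) \leq \ell_{\rho_q}(c\gamma)$. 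Using the density of $\mathbb{R}_+ \cdot \mathcal{C}(S)$ in $\MF(S)$, the joint continuity of $i(\cdot,\cdot)$ recalled in \cite[Corollary 1.11]{Rees}, and the continuous extension of the $\rho_q$-length functional to $\MF(S)$ (analogous to the extension of hyperbolic length from \cite{Kerckhoff85}), this upgrades to $i(\xi,\eta) \leq \ell_{\rho_q}(\eta)$ for every $\eta \in \MF(S)$.

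Finally, the defining sup-inequality of extremal length, applied to the conformal metric $\rho_q$ and to $\eta$, gives
\[
\ell_{\rho_q}(\eta)^2 \leq \E_x(\eta) \cdot \mathrm{Area}(\rho_q) = \E_x(\eta)\cdot \|q\| = \E_x(\xi)\E_x(\eta).
\]
Squaring the previous bound and combining yields the desired $i(\xi,\eta)^2 \leq \E_x(\xi)\E_x(\eta)$. For the equality case I would trace back both steps: equality in the pointwise bound $d\xi \leq \rho_q$ along a.e.\ leaf of $\eta$ forces $dy = 0$ along $\eta$-leaves in $q$-coordinates, i.e.\ $\eta$ is horizontal for $q$; equality in the sup characterization forces $\rho_q$ to be the extremal metric for $\eta$, which by Jenkins--Strebel/Hubbard--Masur uniqueness means $\eta$ is exactly the horizontal measured foliation of $q$. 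Conversely, if such a $q$ exists with $\xi$ vertical and $\eta$ horizontal, a direct computation in natural coordinates shows equality throughout.

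The main obstacle I expect is making the passage from weighted simple closed curves to arbitrary $\eta \in \MF(S)$ fully rigorous: one must give a clean definition of $\ell_{\rho_q}(\eta)$ for a general measured foliation and prove continuity, while handling the finite singular set of $q$ where $\rho_q$ degenerates. The standard route is to realize $\eta$ as a limit of weighted simple closed curves via Thurston's density theorem and to invoke Rees's continuous-extension machinery from \cite{Rees}, checking that limits commute across the isolated zeros of $q$; this is justified since the singular set is finite and hence negligible for any transverse measure on $S$.
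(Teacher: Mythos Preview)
The paper does not prove this lemma; it is simply quoted from \cite{Minsky93} as a known result and used as a black box throughout Section~\ref{Section:Harish-Chandraestimations} and later. So there is no ``paper's own proof'' to compare against.

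Your sketch is the standard argument and is essentially correct: take the Hubbard--Masur differential $q$ with $\mathcal{V}(q)=\xi$, use $\|q\|=\E_x(\xi)$, bound the transverse measure $|dx|$ by the $|q|^{1/2}$-length element, and then feed the $q$-metric into the sup-definition of $\E_x(\eta)$. The density/continuity step you flag is indeed the only place needing care, and your proposed route via weighted simple closed curves and the continuous extensions of $i(\cdot,\cdot)$ and $\E_x(\cdot)$ (both already recalled in the paper from \cite{Rees} and \cite{Kerckhoff80}) is the right one; the finite singular set of $q$ causes no trouble. The equality analysis is a little breezy---tracing equality in the extremal-length sup back to the uniqueness of the extremal metric is the correct idea, but making it fully rigorous requires the Jenkins--Strebel uniqueness statement you allude to---but for the purposes of this paper only the inequality is ever used, so this is harmless.
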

\begin{corollary}\label{corollary:upperbound}
	There exist constants $C_3 = C_3(g,\rho) > 0$ and $C_4 = C_4(g,\rho) > 0$ such that, for every $M \in (0,1)$ and every $\gamma \in \M(S)$, $$\Phi(\gamma) \leq C_3e^{-\frac{h}{2}n}\Psi(\gamma)_{\geq M} + C_4e^{\frac{h}{2}n} \nu(\{\eta \in \PMF(S): i(\eta, \xi_{\gamma}) \leq M\} ).$$	
\end{corollary}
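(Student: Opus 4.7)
The plan is to split $\Phi(\gamma)$ at the level $M$ of the intersection number with $\xi_{\gamma}$, write $\Phi(\gamma) = I_{1} + I_{2}$ where
$$I_{1} = \int_{\{i(\xi_{\gamma},\eta) \geq M\}} \left(\frac{1}{\E_{\gamma\cdot o}(\eta)}\right)^{h/4} d\nu(\eta), \qquad I_{2} = \int_{\{i(\xi_{\gamma},\eta) < M\}} \left(\frac{1}{\E_{\gamma\cdot o}(\eta)}\right)^{h/4} d\nu(\eta),$$
and bound each piece by one of the two terms on the right.

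For $I_{1}$, I would apply Minsky's inequality (Lemma \ref{Minskyinequailty}) at the point $x = \gamma\cdot o$, which yields
$$\frac{1}{\E_{\gamma\cdot o}(\eta)} \;\leq\; \frac{\E_{\gamma\cdot o}(\xi_{\gamma})}{i^{2}(\xi_{\gamma},\eta)}.$$
The key observation is that $\xi_{\gamma}$ is the vertical foliation of the unit quadratic differential defining $[o,\gamma\cdot o]$, so by the standard behavior of extremal length along Teichm\"{u}ller geodesic flow (equivalently, the equality case of Kerckhoff's formula), $\E_{\gamma\cdot o}(\xi_{\gamma}) = e^{-2d_{T}(o,\gamma\cdot o)}$, using also $\E_{o}(\xi_{\gamma}) = 1$ from the $\tau$-normalization. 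Combined with $d_{T}(o,\gamma\cdot o) \in (n-\rho,n+\rho)$, raising to the $h/4$-th power gives
$$\left(\frac{1}{\E_{\gamma\cdot o}(\eta)}\right)^{h/4} \;\leq\; e^{(h/2)\rho}\, e^{-(h/2)n}\, \frac{1}{i^{h/2}(\xi_{\gamma},\eta)},$$
and integrating over $\{i(\xi_{\gamma},\eta) \geq M\}$ produces $I_{1} \leq C_{3} e^{-(h/2)n}\Psi(\gamma)_{\geq M}$ with $C_{3} = e^{(h/2)\rho}$.

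For $I_{2}$, I would use Kerckhoff's formula (Lemma \ref{lemma:Kerckhoffformula}) in the opposite direction: for \emph{every} $\eta \in \PMF(S)$,
$$\frac{\E_{o}(\eta)}{\E_{\gamma\cdot o}(\eta)} \;\leq\; e^{2 d_{T}(o,\gamma\cdot o)},$$
hence $\E_{\gamma\cdot o}(\eta) \geq e^{-2(n+\rho)}$ since $\E_{o}(\eta) = 1$. Therefore the integrand is bounded pointwise by $e^{(h/2)(n+\rho)}$, and integrating over $\{i(\xi_{\gamma},\eta) < M\}$ yields $I_{2} \leq C_{4} e^{(h/2)n}\nu(\{i(\xi_{\gamma},\eta) \leq M\})$ with $C_{4} = e^{(h/2)\rho}$. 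Adding the two estimates gives the corollary.

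There is no serious obstacle here; the corollary is essentially a clean bookkeeping application of Minsky (which is sharp precisely at the vertical foliation of the geodesic, explaining why the first term carries the correct exponent $e^{-(h/2)n}$) together with the trivial Kerckhoff bound on the complement. The only point that requires some care is invoking the identity $\E_{\gamma\cdot o}(\xi_{\gamma}) = e^{-2 d_{T}(o,\gamma\cdot o)}$, which relies on recognizing that equality in Kerckhoff's formula is achieved along the Teichm\"{u}ller geodesic precisely by its vertical foliation; this is where the "free" factor $e^{-(h/2)n}$ in the first term comes from.
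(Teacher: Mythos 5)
Your proof is essentially the paper's own argument: both decompose $\PMF(S)$ at level $M$ of $i(\cdot,\xi_\gamma)$, handle the large-intersection piece by Minsky's inequality together with the identity $\E_{\gamma\cdot o}(\xi_\gamma)=e^{-2d_T(o,\gamma\cdot o)}$, and handle the small-intersection piece by the trivial Kerckhoff bound $\E_{\gamma\cdot o}(\eta)\geq e^{-2d_T(o,\gamma\cdot o)}$. The only cosmetic difference is that the paper attributes the key extremal-length identity to Lemma~\ref{busemann} (the Busemann-function formula) while you derive it directly from extremal-length decay along the Teichm\"uller geodesic flow; these are the same fact, and if anything your justification is cleaner since it avoids the unique-ergodicity hypothesis in Lemma~\ref{busemann}.
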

\begin{proof}
	Decompose $\PMF(S)$ into two subsets $A = \{ \eta \in \PMF(S): i(\eta, \xi_{\gamma}) \leq M \}$ and $B = \{ \eta \in \PMF(S): i(\eta, \xi_{\gamma}) \geq M \}$. Then we have
	\begin{equation}
	\begin{aligned}
	&\Phi(\gamma) = \underbrace{\int_{A}\left(\frac{1}{\E_{\gamma \cdot o}(\eta)}\right)^{\frac{h}{4}}d\nu(\eta)}_{I} + \underbrace{\int_{B}\left(\frac{1}{\E_{\gamma \cdot o}(\eta)}\right)^{\frac{h}{4}}d\nu(\eta)}_{II}\\
	&= {\rm I + II}.
	\end{aligned}
	\end{equation}
	By Kerckhoff's formula, ${\rm I} \prec_{g,\rho} e^{\frac{h}{2}n} \nu(\{\eta \in \PMF(S): i(\eta, \xi_{\gamma}) \leq M\} )$. Thanks to Lemma \ref{busemann}, we can replace $\E_{\gamma \cdot o}(\xi_{\gamma})$ in Lemma \ref{Minskyinequailty} by $e^{-2n}$, so one has $$\frac{1}{i^2(\xi_{\gamma}, \eta) e^{2n}} \succ_{g,\rho} \frac{1}{\E_{\gamma \cdot o}(\eta)},$$ which gives the bound for the term ${\rm II}$.
\end{proof} 

In order to bound $\Phi(\gamma)$ from below, we will use the fact that $\gamma \in E_n$. 
\begin{lemma}\label{lemma:inverseminsky}
	There exists a constant $F$ depending on $g,o,\epsilon,\theta,\rho$ such that if $i(\xi_{\gamma}, \eta) \geq F \ln ne^{-2n}$, where $\eta \in U(\epsilon,\theta)$ and $\gamma \in E_n$, then $i^2(\xi_{\gamma},\eta) \succ_{g,o,\epsilon,\theta,\rho} \E_{\gamma \cdot o}(\eta)e^{-2n}$. 
\end{lemma}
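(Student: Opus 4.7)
The plan is to establish this reverse Minsky-type inequality at $\gamma\cdot o$ by transferring a sharp comparison from a nearby $\epsilon$-thick point on $[o,\gamma\cdot o]$, where the flat geometry of the geodesic's quadratic differential is well-controlled, and then absorbing the resulting Lipschitz distortion using the lower bound on $i(\xi_\gamma,\eta)$.

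First, by Lemma~\ref{lemma:longsegment} together with property (c) in the definition of $E_n$, the segment $I_\gamma\subset[o,\gamma\cdot o]$ of length $\frac{1}{3h}\ln\ln n$ containing $\gamma\cdot o$ has proportion at least $\theta>0.999$ in $\mathcal{T}_\epsilon(S)$; hence there exists an $\epsilon$-thick point $p$ on $I_\gamma$ with $s:=d(p,\gamma\cdot o)\leq\frac{1}{3h}\ln\ln n$, and $t:=d(o,p)=n-s+O(\rho)$. Let $q_o$ denote the unit-norm holomorphic quadratic differential at $o$ corresponding to the direction $\xi_\gamma$, and write $\mathcal{V}(q_o)=\lambda\,\xi_\gamma$ and $\mathcal{H}(q_o)=\lambda^{-1}\bar{\xi}_\gamma$, where $\bar{\xi}_\gamma$ is normalized so $\E_o(\bar{\xi}_\gamma)=1$. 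Sharpness of Lemma~\ref{Minskyinequailty} at $o$ combined with $\|q_o\|=1$ forces $\lambda^2=\E_o(\mathcal{V}(q_o))$ and $i(\xi_\gamma,\bar{\xi}_\gamma)=1$; by compactness of $QD^1(X)$ the constant $\lambda$ is bounded above and below by positive constants depending only on $o$. Along the Teichm\"uller geodesic, $\mathcal{V}(q_p)=e^t\lambda\,\xi_\gamma$ and $\mathcal{H}(q_p)=e^{-t}\lambda^{-1}\bar{\xi}_\gamma$.

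Since $p$ is $\epsilon$-thick, a standard thick-part estimate (in the spirit of Rafi~\cite{Rafi14}) provides the uniform bi-Lipschitz comparison $\sqrt{\E_p(\eta)}\asymp_\epsilon\ell_{|q_p|}(\eta)$ between square-root extremal length and $|q_p|$-flat length, and the flat length is controlled by the horizontal and vertical components of $\eta$:
\begin{equation*}
\ell_{|q_p|}(\eta)\;\leq\;i(\mathcal{V}(q_p),\eta)+i(\mathcal{H}(q_p),\eta)\;=\;e^{t}\lambda\,i(\xi_\gamma,\eta)+e^{-t}\lambda^{-1}\,i(\bar{\xi}_\gamma,\eta).
\end{equation*}
Squaring, applying $(a+b)^2\leq 2(a^2+b^2)$, and then transferring to $\gamma\cdot o$ via Kerckhoff's formula (Lemma~\ref{lemma:Kerckhoffformula}) $\E_{\gamma\cdot o}(\eta)\leq e^{2s}\E_p(\eta)$, using $s+t=n$, yields
\begin{equation*}
\E_{\gamma\cdot o}(\eta)\;\prec_{\epsilon,o}\;e^{2n}\,i(\xi_\gamma,\eta)^2\;+\;e^{4s-2n}\,i(\bar{\xi}_\gamma,\eta)^2.
\end{equation*}
Minsky's inequality at $o$ gives $i(\bar{\xi}_\gamma,\eta)^2\leq\E_o(\bar{\xi}_\gamma)\E_o(\eta)=1$, and $e^{4s}\leq(\ln n)^{4/(3h)}$. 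Since $h=6g-6\geq 6$ implies $4/(3h)<2$, the hypothesis $i(\xi_\gamma,\eta)\geq F\ln n\cdot e^{-2n}$ yields
\begin{equation*}
\frac{e^{4s-2n}\,i(\bar{\xi}_\gamma,\eta)^2}{e^{2n}\,i(\xi_\gamma,\eta)^2}\;\leq\;\frac{(\ln n)^{4/(3h)}\,e^{-2n}}{F^2(\ln n)^2\,e^{-2n}}\;=\;\frac{(\ln n)^{4/(3h)-2}}{F^2}\;\xrightarrow[n\to\infty]{}\;0,
\end{equation*}
so for $F$ sufficiently large (depending on $g,o,\epsilon,\theta,\rho$) the error term is dominated by the main term, giving $\E_{\gamma\cdot o}(\eta)\prec_{g,o,\epsilon,\theta,\rho}e^{2n}\,i(\xi_\gamma,\eta)^2$, equivalently $i^2(\xi_\gamma,\eta)\succ_{g,o,\epsilon,\theta,\rho}\E_{\gamma\cdot o}(\eta)\,e^{-2n}$.

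The main technical obstacle is the thick-part comparison $\sqrt{\E_p(\eta)}\asymp_\epsilon\ell_{|q_p|}(\eta)$ with constants depending only on $\epsilon$ (and in particular independent of the specific quadratic differential $q_p$): this requires that at $\epsilon$-thick points the flat metric of any unit-norm holomorphic quadratic differential is uniformly bi-Lipschitz to the hyperbolic metric, a fact that rests on the absence of short curves at thick points. Once this comparison is secured, the bookkeeping is straightforward, and the choice of $\frac{1}{3h}\ln\ln n$ in the definition of $I_\gamma$ is calibrated precisely so that $e^{4s}\leq(\ln n)^{4/(3h)}$ is absorbed by the $(\ln n)^2$ slack in the hypothesis on $i(\xi_\gamma,\eta)$.
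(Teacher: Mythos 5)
Your proposal takes a genuinely different route from the paper. The paper argues via the Dowdall--Duchin--Masur thin-triangle theorem (Theorem~\ref{theorem:DDM}): using Lemma~\ref{lemma:longsegment} it produces a point $q$ on the terminal segment $I_\gamma$ that lies within bounded distance $D$ of either $[\xi_\gamma,\eta]$ or $[o,\eta]$, and then runs a two-case analysis in which the sharp case of Minsky's inequality (Lemma~\ref{Minskyinequailty}) plus Kerckhoff's formula and Lemma~\ref{busemann} give the desired bound in Case~1, while Case~2 is shown to force $i(\xi_\gamma,\eta)\prec\ln n\,e^{-2n}$ and is therefore excluded by the hypothesis on $i(\xi_\gamma,\eta)$. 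Your proposal instead stays entirely on $[o,\gamma\cdot o]$, passes to a nearby thick point $p$, and tries to bound $\E_p(\eta)$ directly by the horizontal and vertical components of $\eta$ in the flat metric of $q_p$.

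The gap is exactly the step you flag as the main technical obstacle, and it is real: the inequality $\sqrt{\E_p(\eta)}\prec_\epsilon\ell_{|q_p|}(\eta)$ (equivalently $\E_p(\eta)\prec_\epsilon\bigl(i(\mathcal{V}(q_p),\eta)+i(\mathcal{H}(q_p),\eta)\bigr)^2$), with constants depending only on $\epsilon$, is not a consequence of $p$ being hyperbolically $\epsilon$-thick. The reverse inequality $\ell_{|q_p|}(\eta)\leq\sqrt{\E_p(\eta)}$ is automatic from the definition of extremal length (take $\rho=|q_p|$), but the direction you need can fail. Hyperbolic $\epsilon$-thickness forces every embedded annulus to have modulus $\leq 1/\epsilon$, hence $\E_p(\alpha)\geq\epsilon$ for all simple closed curves $\alpha$; it does \emph{not} force a lower bound on the $|q_p|$-systole. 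A unit-area flat cylinder with small circumference $c$ and height $h\leq c/\epsilon$ (so bounded modulus, and area $ch\leq c^2/\epsilon$ small) is consistent with $p\in\T_\epsilon(S)$, and for its core curve $\alpha$ one then has $\ell_{|q_p|}(\alpha)\leq c$ arbitrarily small while $\sqrt{\E_p(\alpha)}\geq\sqrt{\epsilon}$, contradicting the claimed comparison. The phenomenon that the flat metric of a Teichm\"uller geodesic can have short saddle connections or cylinders even where the hyperbolic surface is thick is precisely what Rafi's thick-thin analysis is designed to quantify, and it is why one cannot simply declare the flat and hyperbolic metrics uniformly bi-Lipschitz on $\T_\epsilon(S)$.

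The paper's argument sidesteps this completely: it never uses the flat metric of $q_p$ at an intermediate point, but instead uses only extremal lengths, the equality case of Minsky's inequality at a point on the \emph{third} side $[\xi_\gamma,\eta]$ of the triangle, and the DDM thin-triangle dichotomy to rule out the other configuration. If you want to salvage your flat-geometry approach, you would need to replace the bi-Lipschitz claim by a genuine quantitative statement (e.g. controlling flat-short curves on $I_\gamma$ using the active-interval structure of Rafi's work, or reverting to Minsky's combinatorial formula, Theorem~\ref{Minskyextremal}, at a Bers pants decomposition of $p$), and neither of these is elementary. As it stands the proof does not go through.
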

\begin{proof}
	First we remark that, since both $\eta$ and $\xi_{\gamma}$ are uniquely ergodic, by [\cite{klarreich2018boundary}, Proposition 5.1], there is a geodesic whose horizontal and vertical measured foliations are in the projective classes $\xi_{\gamma}$ and $\eta$ respectively. Hence we have a geodesic triangle $\triangle(o,\xi_{\gamma}, \eta)$. As $\gamma \in E_n$, Lemma \ref{lemma:longsegment} implies that there is a geodesic segment $I$ of length $\ell = \frac{1}{3h}\ln \ln n$ in $[o,\gamma \cdot o]$ ending at $\gamma \cdot o$ that has at least proportion $\theta$ in $\T_{\epsilon}(S)$. By Theorem \ref{theorem:DDM} (although the theorem is for finite triangles, it is easy to see that it can be applied to $\triangle(o,\xi_{\gamma}, \eta)$), $$I \cap \mathcal{N}_{D}([\eta,\xi_{\gamma}]\cap[o,\eta]) \ne \emptyset,$$ where $D$ comes from Theorem \ref{theorem:DDM}. Choose $q \in I \cap \mathcal{N}_{D}([\eta,\xi_{\gamma}]\cap[o,\eta])$. Then there are two possibilities:\\
	
	\noindent Case 1: $d(q, y) \leq D$ with $y\in [\xi_{\gamma},\eta]$.\\
	
	Then we have, by Kerckhoff's formula and Lemma \ref{Minskyinequailty},
	\begin{displaymath}
	\begin{aligned}
	& i^2(\xi_{\gamma},\eta) = \E_{y}(\eta)\E_{y}(\xi_{\gamma})\\
	&\succ_{g,o,\theta,\epsilon} \E_{q}(\eta)\E_{q}(\xi_{\gamma})\\
	&=\E_{q}(\eta)e^{-2d(o,q)}\\
	&\geq \E_{\gamma \cdot o}(\eta)e^{-2n},
	\end{aligned}
	\end{displaymath}
	which means that, in this case, we always have $i^2(\xi_{\gamma},\eta) \succ_{g,o,\epsilon,\theta} \E_{\gamma \cdot o}(\eta)e^{-2n}$.\\
	
	\noindent Case 2: $d(q, y) \leq D$ with $y\in [o,\eta]$.\\
	
	Then we have
	
	\begin{displaymath}
	\begin{aligned}
	&i^2(\xi_{\gamma},\eta) \leq \E_{y}(\eta)\E_{y}(\xi_{\gamma})\\
	&\sim_{g,o,\theta,\epsilon} \E_{y}(\eta)\E_{q}(\xi_{\gamma})\\
	&\sim_{g,o,\theta,\epsilon,\rho} e^{-4d(o,q)} = e^{-4(d(o,\gamma \cdot o)-d(q,\gamma \cdot o))}\\
	&\leq e^{-4n}e^{4\ell}.
	\end{aligned}
	\end{displaymath}
	
	Therefore, in this case we have a constant $F_1$ depending on $g,o,\epsilon,\theta,\rho$ such that $$i(\xi_{\gamma},\eta) \leq F_1e^{-2n}e^{2\ell} \leq F_1e^{-2n}e^{\ln \ln n} = F_1\ln n e^{-2n}.$$ Thus if we take $F \gg F_1$ and require $i(\xi_{\gamma},\eta) \geq F\ln ne^{-2n}$, it forces us in the Case 1 which implies the conclusion that $$i^2(\xi_{\gamma},\eta) \succ_{g,o,\epsilon,\theta} \E_{\gamma \cdot o}(\eta)e^{-2n}.$$
\end{proof}

\begin{corollary}\label{corollary:lowerbound}
	For every $\gamma \in E_n$, take $\bar{M} = F\ln ne^{-2n}$ where $F$ is the constant in Lemma \ref{lemma:inverseminsky}. Then $\Phi(\gamma) \succ_{g,o,\epsilon,\theta,\rho} e^{-\frac{h}{2}n}\Psi(\gamma)_{\geq \bar{M}}.$
\end{corollary}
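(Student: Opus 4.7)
The plan is to deduce this corollary as a direct consequence of Lemma \ref{lemma:inverseminsky} combined with the fact that uniquely ergodic directions have full Thurston measure. The key observation is that the integrand defining $\Phi(\gamma)$ is non-negative, so restricting the domain of integration can only decrease the value, which produces a usable lower bound.

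First I would rewrite
\[
\Phi(\gamma) = \int_{\PMF(S)}\left(\frac{1}{\E_{\gamma\cdot o}(\eta)}\right)^{h/4} d\nu(\eta)
\]
and restrict the domain to the set
\[
A_{\bar M} = \bigl\{\eta \in U(\theta,\epsilon) : i(\xi_\gamma,\eta) \geq \bar M \bigr\},
\]
obtaining $\Phi(\gamma) \geq \int_{A_{\bar M}} \E_{\gamma\cdot o}(\eta)^{-h/4}\,d\nu(\eta)$. Since $\gamma \in E_n$ and every $\eta \in A_{\bar M}$ is uniquely ergodic with $i(\xi_\gamma,\eta) \geq F \ln n\, e^{-2n}$, Lemma \ref{lemma:inverseminsky} applies pointwise to give
\[
\E_{\gamma\cdot o}(\eta)^{-1} \succ_{g,o,\epsilon,\theta,\rho} \frac{e^{-2n}}{i^2(\xi_\gamma,\eta)}.
\]
Raising to the power $h/4$ yields
\[
\left(\frac{1}{\E_{\gamma\cdot o}(\eta)}\right)^{h/4} \succ_{g,o,\epsilon,\theta,\rho} \frac{e^{-hn/2}}{i(\xi_\gamma,\eta)^{h/2}}
\]
uniformly in $\eta \in A_{\bar M}$.

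I would then integrate this pointwise bound over $A_{\bar M}$ and pull the factor $e^{-hn/2}$ out of the integral. Finally, because $U(\theta,\epsilon)$ has full $\nu$-measure (i.e.\ $\nu(U(\theta,\epsilon))=1$), restricting the domain $\{\eta : i(\xi_\gamma,\eta) \geq \bar M\}$ to its intersection with $U(\theta,\epsilon)$ does not change the value of the integral $\Psi(\gamma)_{\geq \bar M}$, so
\[
\int_{A_{\bar M}} \frac{d\nu(\eta)}{i(\xi_\gamma,\eta)^{h/2}} \;=\; \Psi(\gamma)_{\geq \bar M},
\]
and we conclude $\Phi(\gamma) \succ_{g,o,\epsilon,\theta,\rho} e^{-hn/2}\,\Psi(\gamma)_{\geq \bar M}$.

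There is no serious obstacle here; all the work was done in Lemma \ref{lemma:inverseminsky}, which converted the statistical hyperbolicity condition defining $E_n$ into an inverse Minsky-type inequality valid on uniquely ergodic foliations that are ``far'' from $\xi_\gamma$ in intersection number. The only mild subtlety is that Lemma \ref{lemma:inverseminsky} requires $\eta \in U(\theta,\epsilon)$, but this is harmless because $U(\theta,\epsilon)$ has full $\nu$-measure; the role of $\bar M = F \ln n\, e^{-2n}$ (rather than a smaller threshold) is exactly to force us into Case 1 of the proof of Lemma \ref{lemma:inverseminsky}, ruling out the short-cut configuration where the thin triangle closes up near the $[o,\eta]$ side.
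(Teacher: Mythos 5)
Your proof is correct and follows exactly the same route as the paper: restrict the $\Phi(\gamma)$ integral to $\{\eta \in U(\theta,\epsilon): i(\xi_\gamma,\eta) \ge \bar M\}$, apply Lemma \ref{lemma:inverseminsky} pointwise to convert $\E_{\gamma\cdot o}(\eta)^{-h/4}$ into $e^{-hn/2}\,i(\xi_\gamma,\eta)^{-h/2}$, and invoke $\nu(U(\theta,\epsilon))=1$ to identify the resulting integral with $\Psi(\gamma)_{\ge \bar M}$. The paper states this in three compressed lines; you have simply made the pointwise inequality and the role of full measure explicit.
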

\begin{proof}
	Note that $U(\epsilon,\theta)$ has full measure. Hence, by Lemma \ref{lemma:inverseminsky}
	\begin{displaymath}
	\begin{aligned}
	&\Phi(\gamma) = \int_{U(\epsilon,\theta)}\left(\frac{1}{\E_{\gamma.o}(\eta)}\right)^{\frac{h}{4}}d\nu(\eta)\\
	&\geq \int_{\{\eta \in U(\epsilon,\theta): i(\eta,\xi_{\gamma}) \geq \bar{M}\}}\left(\frac{1}{\E_{\gamma.o}(\eta)}\right)^{\frac{h}{4}}d\nu(\eta)\\
	&\succ_{g,o,\epsilon,\theta} e^{-\frac{hn}{2}}\Psi(\gamma)_{\geq \bar{M}}.
	\end{aligned}
	\end{displaymath}
\end{proof}
\begin{lemma}\label{lemma:log}
	Assume that there exist a sequence $\{\xi_k\} \in \PMF(S)$ converges to $\xi_{\gamma}$ and constants $N_0 > 0, a > 0, b > 0$ such that for every $k > 0$, $$\forall N \leq N_0, ~~~a N^{\frac{h}{2}} \leq \nu(\{\eta \in \PMF(S): i(\eta, \xi_k) \leq N\}) \leq b N^{\frac{h}{2}} ,$$ then there exist $N_0, A > 0,B > 0 ,D_1,D_2$ such that $$\forall N \leq N_0,~~ -A\ln N + D_1 \leq \Psi(\gamma)_{\geq N }\leq -B\ln N +D_2. $$
\end{lemma}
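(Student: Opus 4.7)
The plan is to rewrite $\Psi(\gamma)_{\geq N}$ via the layer-cake (distribution-function) identity and then estimate the resulting one-dimensional integral using the hypothesis, which must first be transferred from the sequence $\{\xi_k\}$ to the limit $\xi_\gamma$. Writing $f(\eta) := i(\xi_\gamma, \eta)$ and substituting $t = s^{-h/2}$ in the standard layer-cake formula for $\int f^{-h/2}\,d\nu$ yields
\begin{equation*}
\Psi(\gamma)_{\geq N} = \frac{h}{2}\int_N^{\infty} \nu\bigl(\{\eta : N \leq f(\eta) \leq s\}\bigr)\, s^{-h/2-1}\,ds,
\end{equation*}
so the task reduces to two-sided estimates on the distribution function $F(s) := \nu(\{f \leq s\})$.

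Next I would transfer the hypothesis from $\xi_k$ to $\xi_\gamma$. Since $\PMF(S)$ is compact (embedded as the unit extremal-length sphere via $\tau$) and the intersection number is jointly continuous, $f_k := i(\xi_k, \cdot)$ converges uniformly to $f$ on $\PMF(S)$. Hence for every $\varepsilon > 0$ and all sufficiently large $k$, $\{f_k \leq N - \varepsilon\} \subset \{f \leq N\} \subset \{f_k \leq N + \varepsilon\}$; squeezing with the hypothesized bounds $aN^{h/2} \leq \nu(\{f_k \leq N\}) \leq bN^{h/2}$ and letting $\varepsilon \to 0$ (possibly after shrinking $N_0$) gives
\begin{equation*}
a N^{h/2} \;\leq\; F(N) \;\leq\; b N^{h/2}, \qquad N \leq N_0.
\end{equation*}

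For the upper bound, dominate $\nu(\{N \leq f \leq s\}) \leq F(s) \leq b s^{h/2}$ on $[N, N_0]$ and $\leq 1$ on $[N_0, \infty)$; the first range contributes $\frac{hb}{2}\int_N^{N_0} s^{-1}\,ds = -\frac{hb}{2}\ln N + \frac{hb}{2}\ln N_0$, while the tail $\frac{h}{2}\int_{N_0}^{\infty} s^{-h/2-1}\,ds$ is an additive constant, giving the claimed $-B\ln N + D_2$. For the lower bound, note $\nu(\{N \leq f \leq s\}) \geq F(s) - F(N) \geq a s^{h/2} - b N^{h/2}$, which exceeds $\frac{a}{2} s^{h/2}$ as soon as $s \geq (2b/a)^{2/h} N$; restricting the layer-cake integral to $s \in [(2b/a)^{2/h} N,\, N_0]$ (a nonempty interval after shrinking $N_0$) gives $\Psi(\gamma)_{\geq N} \geq \frac{ha}{4}\int_{(2b/a)^{2/h} N}^{N_0} s^{-1}\,ds = -\frac{ha}{4}\ln N + D_1$.

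The only subtle point is step two, the passage from $\xi_k$ to $\xi_\gamma$. Compactness of $\PMF(S)$ in the $\tau$-embedding, together with continuity of the intersection number, forces the convergence $f_k \to f$ to be uniform, so one avoids any Portmanteau-type worry about the level set $\{f = N\}$ carrying positive $\nu$-mass; the remaining steps are routine one-dimensional calculus. The exponents $A = \frac{ha}{4}$ and $B = \frac{hb}{2}$ arise directly from the $s^{-1}$ integrand left over after cancelling $s^{h/2}$ with $s^{-h/2-1}$.
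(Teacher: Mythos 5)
Your proposal is correct, and it takes a genuinely different route from the paper. The paper keeps the integrand expressed in terms of the approximants $\xi_k$: it uses uniform convergence of $i(\xi_k,\cdot)\to i(\xi_\gamma,\cdot)$ on compact sets away from $\xi_\gamma$ to obtain the containment $\{\,i(\xi_k,\cdot)\ge 2N\,\}\subset\{\,i(\xi_\gamma,\cdot)\ge N\,\}$ and the multiplicative comparison $\tfrac12\le (i(\xi_k,\eta)/i(\xi_\gamma,\eta))^{h/2}\le 2$ on that set, giving $\tfrac12\Psi(k)_{\geq 2N}\le\Psi(\gamma)_{\geq N}$; for the other side it invokes Fatou's lemma to get $\Psi(\gamma)_{\geq N}\le\Psi(k)_{\geq N}+1$; and only then does the layer-cake computation, applied to $\Psi(k)_{\geq N}$ where the hypothesis directly applies. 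You instead transfer the two-sided distribution-function bound to $\xi_\gamma$ first, using uniform convergence on all of $\tau(\PMF(S))$ (which is valid: the intersection form restricted to the compact product $(\{\xi_k\}\cup\{\xi_\gamma\})\times\tau(\PMF(S))$ is uniformly continuous), and then do a single layer-cake computation directly for $\Psi(\gamma)_{\geq N}$. This avoids both the Fatou argument and the $N\mapsto 2N$ shift, and it gives explicit constants $A=ha/4$, $B=hb/2$. In effect you merge what the paper splits between Lemma \ref{lemma:log} and Lemma \ref{lemma:approximation} (which in the paper only transfers the upper bound, via Fatou) into one sandwich step; the price is that you need the full two-sided hypothesis to survive the transfer, but that is exactly what the lemma assumes. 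Minor bookkeeping you handled correctly: the bound on $F$ is obtained for $N$ strictly below $N_0$ after the $\varepsilon\to 0$ limit, and the lower-bound integration range $[(2b/a)^{2/h}N,\,N_0]$ requires shrinking $N_0$, both of which you flag.
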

\begin{proof}
Fix $N \leq N_0$, then the set $\{\eta \in \PMF(S): i(\eta,\xi_{\gamma}) \geq N\}$ is compact. Since $i(\xi_k,\cdot)$ converges to $i(\xi_{\gamma},\cdot)$ uniformly on compact sets outside $\{\xi_{\gamma}\}$, there exists $K_1$ so that, when $k \geq K_1$, $$\{\eta: i(\xi_k, \eta) \geq 2N\} \subset \{\eta: i(\xi_{\gamma},\eta) \geq N\}.$$ Hence $$\forall k \geq K_1, \int_{\{\eta \in \PMF(S): i(\xi_{k},\eta) \geq 2N\}}\left(\frac{1}{i(\xi_{\gamma},\eta)}\right)^{\frac{h}{2}}d\nu \leq \int_{\{\eta \in \PMF(S): i(\xi_{\gamma},\eta) \geq N\}}\left(\frac{1}{i(\xi_{\gamma},\eta)}\right)^{\frac{h}{2}}d\nu.$$ By convergence uniform on compact sets again, there is $K_2$ such that when $k \geq K_2$, for $\eta \in \{\eta: i(\eta,\xi_(\gamma)) \geq N\}$, $$\frac{1}{2} \leq \left(\frac{i(\xi_k,\eta)}{i(\xi_{\gamma},\eta)}\right)^{\frac{h}{2}} \leq 2.$$ Take $k > \max\{K_1,K_2\}$, then 
\begin{equation*}
    \begin{aligned}
        &\frac{1}{2}\int_{\{\eta \in \PMF(S): i(\xi_{k},\eta) \geq 2N\}}\left(\frac{1}{i(\xi_{k},\eta)}\right)^{\frac{h}{2}}d\nu\\
        &\leq \int_{\{\eta \in \PMF(S): i(\xi_{k},\eta) \geq 2N\}}\left(\frac{1}{i(\xi_{\gamma},\eta)}\right)^{\frac{h}{2}}d\nu \\
        &\leq \Psi(\gamma)_{\geq N}
    \end{aligned}
\end{equation*}
Since for $\eta \in \PMF(S)$, $$\lim_{k \to \infty} \mathds{1}_{\{\eta \in \PMF(S): i(\xi_{k},\eta) \geq N\}}(\eta)\left(\frac{1}{i(\xi_{k},\eta)}\right)^{\frac{h}{2}} = \mathds{1}_{\{\eta \in \PMF(S): i(\xi_{\gamma},\eta) \geq N\}}(\eta)\left(\frac{1}{i(\xi_{\gamma},\eta)}\right)^{\frac{h}{2}},$$
by Fatou's lemma, there is $K_3$ such that when $k \geq K_3$, $$\Psi(\gamma)_{\geq N} \leq \int_{\{\eta \in \PMF(S): i(\xi_{k},\eta) \geq N\}}\left(\frac{1}{i(\xi_{k},\eta)}\right)^{\frac{h}{2}}d\nu + 1.$$
Define $$\Psi(k)_{\geq N} = \int_{\{\eta \in \PMF(S): i(\xi_{k}, \eta) \geq N\}}\left( \frac{1}{i(\xi_{k},\eta)}\right)^{\frac{h}{2}}d\nu(\eta),$$ we have for $k >> 0,$ $$\frac{1}{2}\Psi(k)_{\geq 2N} \leq \Psi(\gamma)_{\geq N} \leq \Psi(k)_{\geq N} + 1.$$ Hence it is sufficient to show that
 there exist $A,B,D_1,D_2$ such that $$-A\ln N + D_1 \leq \Psi(k)_{\geq N }\leq -B\ln N +D_2.$$
Now the proof is close to part of \cite[Proposition 3.2]{Boyer17}. We repeat here for completeness. Namely,
	\begin{equation}
	\begin{aligned}
	& \Psi(k)_{\geq N}= \int_{\{\eta \in \PMF(S): i(\xi_{k}, \eta) \geq N\}}\left( \frac{1}{i(\xi_{k},\eta)}\right)^{\frac{h}{2}}d\nu(\eta)\\
	&=\int_{}\nu\left(\left\{\eta \in \PMF(S): \left(\frac{1}{i(\xi_{k},\eta)}\right)^{\frac{h}{2}} \geq t\right\}\right)dt\\
	&=\int_{1}^{\frac{1}{N^{\frac{h}{2}}}}\nu\left(\left\{\eta \in \PMF(S): i(\xi_{k},\eta)\leq \frac{1}{t^{\frac{2}{h}}}\right\}\right)dt\\
	&=\int_{1}^{N_0}\nu\left(\left\{\eta \in \PMF(S): i(\xi_{k},\eta)\leq \frac{1}{t^{\frac{2}{h}}}\right\}\right)dt \\
	&+\int_{N_0}^{\frac{1}{N^{\frac{h}{2}}}}\nu\left(\left\{\eta \in \PMF(S): i(\xi_{k},\eta)\leq \frac{1}{t^{\frac{2}{h}}}\right\}\right)dt.
	\end{aligned}
	\end{equation}
	\noindent By the assumption and $\nu$ is a probability measure, one can have the conclusion.
\end{proof}
 We include the following observation. 
\begin{lemma}\label{lemma:approximation}
	Let $\beta \in U(\theta,\epsilon)$. If there exists $N_0 > 0$ small enough and a sequence of points $\{\xi_k(\beta) \in \PMF(S)\}$ converges to $\eta$ so that, for each $k$, $$\forall N \leq N_0,~  \nu(\{\eta \in \PMF(S):i(\eta,\xi_k(\beta)) \leq N\}) \leq bN^{\frac{h}{2}},$$ then there exist $N_0 > 0,a'>0$ such that $$\forall N \leq N_0,~  \nu(\{\eta \in \PMF(S):i(\eta,\beta)\leq N\}) \leq a'N^{\frac{h}{2}}.$$ 
\end{lemma}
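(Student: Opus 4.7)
The plan is to upgrade the uniform bound along the approximating sequence $\xi_k(\beta)$ to a bound at the limit $\beta$ by combining continuity of the intersection pairing with a Fatou-type inequality for measures. The point is that $i(\cdot,\cdot)$, viewed on $\MF(S)\times \MF(S)$ via the section $\tau$, is jointly continuous, so for every fixed $\eta$ the numerical sequence $i(\eta,\xi_k(\beta))$ converges to $i(\eta,\beta)$ as $k\to\infty$ (I am reading the statement as saying $\xi_k(\beta)\to \beta$; the $\eta$ in ``converges to $\eta$'' appears to be a typo since the conclusion only makes sense with $\beta$ as the limit).

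First, I would fix any $N<N_0$ and a small auxiliary $\delta>0$ with $N+\delta\le N_0$. For each $\eta$ satisfying the strict inequality $i(\eta,\beta)<N+\delta$, the pointwise convergence produces an index $k_\eta$ such that $i(\eta,\xi_k(\beta))\le N+\delta$ for all $k\ge k_\eta$. Consequently
\[
\{\eta: i(\eta,\beta)<N+\delta\}\ \subseteq\ \liminf_{k\to\infty}\{\eta:\, i(\eta,\xi_k(\beta))\le N+\delta\}.
\]
Then I would apply the elementary measure inequality $\nu(\liminf_k A_k)\le \liminf_k \nu(A_k)$ (Fatou for indicator functions) together with the hypothesis, obtaining
\[
\nu\bigl(\{\eta:\, i(\eta,\beta)<N+\delta\}\bigr)\ \le\ b(N+\delta)^{h/2}.
\]

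Finally, since $\{\eta: i(\eta,\beta)\le N\}\subseteq \{\eta: i(\eta,\beta)<N+\delta\}$, monotonicity of $\nu$ combined with letting $\delta\to 0^+$ yields $\nu(\{\eta: i(\eta,\beta)\le N\})\le bN^{h/2}$. Hence the conclusion holds with $a'=b$ and the same $N_0$, after a harmless shrinking that leaves room for the auxiliary $\delta$.

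The only subtle point, and what I expect to be the main (though minor) technical care, is the asymmetry between the strict inequality that comes naturally out of pointwise convergence and the non-strict inequality demanded in the conclusion: the closed set $\{i(\cdot,\beta)\le N\}$ could in principle carry strictly more $\nu$-mass than the open set $\{i(\cdot,\beta)<N\}$, because $\nu$ need not be absolutely continuous with respect to any notion of level-set thickness. The $\delta$-trick above is what sidesteps this issue, and it works precisely because the hypothesis supplies the \emph{entire family} of bounds $bN^{h/2}$ for all $N\le N_0$, rather than for one isolated value of $N$.
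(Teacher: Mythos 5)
Your proof is correct and uses the same core idea as the paper's, namely Fatou's lemma applied to the indicator functions of the sublevel sets $\{i(\cdot,\xi_k(\beta))\le N\}$. In fact your $\delta$-trick is a little more careful than the paper's own argument: the paper asserts the pointwise limit $\lim_k\mathds{1}_{A_k(N)}=\mathds{1}_{B(N)}$ outright, which can fail on the level set $\{i(\cdot,\beta)=N\}$, whereas your perturbation $N\mapsto N+\delta$ followed by $\delta\to 0^+$ handles that boundary case cleanly and relies only on the hypothesis holding for every $N\le N_0$.
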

\begin{proof}
Denote $A_k(N) = \{\eta: i(\eta, \xi_k(\beta)) \leq N\}$ and $B=\{\eta: i(\eta, \beta) \leq N\}$. Then since $i(\cdot,\cdot)$ is continuous, so $$\lim_{k \to \infty}\mathds{1}_{A_k(N)}(x)= \mathds{1}_{B(N)}(x).$$ By Fatou's lemma, $$\nu(B(N)) \leq \liminf_{k} \nu(A_k(N)) \leq bN^{\frac{h}{2}}. $$ 
\end{proof}

\subsection{A basic example.}
 Before continuing our discussions, we digress to the case of once-punctured torus $S_{1,1}$. Some standard facts are taken from [\cite{Miyachi17}, 7.2 Examples].\\

Let $S = S_{1,1}$. Then $\M(S) = SL(2,\mathbb{Z})$ and $\T(S) = \mathbb{H}^{2}$, the upper half plane. Take $o$ to be $i \in \mathbb{H}^{2}$. The space $\MF(S)$ of measured foliations can be identified with the real plane module the inversion, namely $\{\mathbb{R}^2-(0,0)\}/\{I,-I\}$. By the ergodicity of the Thurston measure $\nu_{Th}$, up to a constant multiple, the measure $\nu_{Th}$, which is defined to be the weak limit of counting measures on $\MF(S)$, can be identified with the Lebesgue measure on $\mathbb{R}^2$. Rays in $\{\mathbb{R}^2-(0,0)\}/\{I,-I\}\}$ are then identified with points in $\PMF(S)$. It implies that $\PMF(S)$ can be identified with $\mathbb{R}P^1$. Notice that all identifications here are $\M(S)-$equivariant. Hence $\PMF(S)$ can be represented as $\{[x:y]: x^2+y^2 \neq 0, x,y \in \mathbb{R}\}$, or $\mathbb{R}\cup \{\infty\}$. $\bar{\T(S)}$ is then the usual compactification of $\mathbb{H}^{2}$. In this case, $\M(S)$ acts on $\bar{\T(S)}$ via linear fractional transformations. For $(x,y) \in \mathbb{R}^2$, the extremal length at $o$ is $$ \E_{o}((x,y)) = x^2+y^2,$$ hence the image of $\PMF(S)$ under $\tau$ is the circle. We will ignore the difference between $\mathbb{R}^2$ and $\mathbb{R}^2/\{I,-I\}$. For two points $(x,y),(p,q) \in \MF(S)$, the intersection number is $|qx-py|$. Write the image of $\PMF(S)$ in the form of $(\sin(\theta), \cos(\theta))$, and fix any $\xi = (\sin(\theta_0),\cos(\theta_0)) \in \PMF(S).$ Let $M$ to be small enough, then
\begin{equation}
\begin{aligned}
&\{\eta \in \PMF(S): i(\xi, \eta) \leq M\} \\
&=\{\theta \in [0,2\pi]: |\sin(\theta)\cos(\theta_0)- \cos(\theta)\sin(\theta_0)| \leq M\}\\
&=\{\theta \in [0,2\pi]: |\sin(\theta-\theta_0)| \leq M \}\\
&=\{\theta \in [0,2\pi]: -M \leq \sin(\theta-\theta_0) \leq M \}.
\end{aligned}
\end{equation}
As $M$ is small enough, $\sin(\theta)$ is almost the same as $\theta$, so there exist constants $A$ and $B$, so that $$AM \leq \nu(\{\eta \in \PMF(S): i(\xi, \eta) \leq M\}) \leq BM,$$
Notice that, when $S$ is $S_{1,1}$, we have $h =6g-6+n= 6\times 1- 6+ 2 \times 1 = 2$, hence $\frac{h}{2} = 1$.

\subsection{Approximation by pant curves.}
In this subsection, we will prove the following proposition which is an analogue of Diophantine approximation.
\begin{proposition} \label{proposition:pantscurveapp}
There are $A > 0,B >0$, depending on $\epsilon$ and $o$, such that, for every $\eta \in U(\theta,\epsilon)$, there is a sequence $\{\xi_k = \xi_k(\eta)  \in \PMF(S)\}^{\infty}_{k = 1}$ satisfying 
\begin{itemize}
	\item[($\star_1$)] For each $k$,  $\xi_k = [x_k = \sum_{i=1}^{3g-3}\alpha^k_i]$, where $\{\alpha^k_i \}^{3g-3}_{i=1}$ is a pants decomposition of $S$.
	\item[($\star_2$)] For each $k$ and $1 \leq i \leq 3g-3$, there is a sequence $\{t_k\}$ of positive reals such that  $Ae^{2t_k} \leq \E_{o}(\alpha^k_i) \leq Be^{2t_k}$, $Ae^{2t_k} \leq \E_{o}(x_k) \leq Be^{2t_k}$ and $\lim_{k \to \infty}t_k = \infty$.
	\item[($\star_3$)] The limit of $\{\xi_k=\frac{x_k}{\sqrt{\E_{o}(x_k)}}\}$ in $\PMF(S)$ is $\eta$. 
\end{itemize}
\end{proposition}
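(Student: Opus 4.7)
The plan is to build $x_k$ from Bers short pants decompositions along the Teichm\"uller geodesic ray $g_t$ from $o$ in direction $\eta$. Since $\eta \in U(\theta,\epsilon)$, Masur's criterion (Lemma \ref{lemma:masurcriterion}) gives that $\eta$ is uniquely ergodic, and the definition of $U(\theta,\epsilon)$ ensures that $\{t : g_t \in \T_{\epsilon}(S)\}$ has positive lower density past some threshold; I extract a strictly increasing sequence $t_k \to \infty$ with $g_{t_k} \in \T_{\epsilon}(S)$. At each $g_{t_k}$, Bers' theorem produces a pants decomposition $\{\alpha_1^k,\dots,\alpha_{3g-3}^k\}$ with hyperbolic length bounded by the universal Bers constant, and the thick-part comparison between extremal and hyperbolic length recalled in the excerpt yields $\E_{g_{t_k}}(\alpha_i^k) \sim_\epsilon 1$. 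Setting $x_k = \sum_i \alpha_i^k$ gives $(\star_1)$ immediately.

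The upper bound in $(\star_2)$ is Kerckhoff's formula (Lemma \ref{lemma:Kerckhoffformula}): $\E_o(\alpha_i^k) \leq e^{2 d_T(o,g_{t_k})} \E_{g_{t_k}}(\alpha_i^k) \prec_\epsilon e^{2 t_k}$, and the subadditivity $\sqrt{\E_o(x_k)} \leq \sum_i \sqrt{\E_o(\alpha_i^k)}$ gives the matching bound for $x_k$. For the lower bound I will use the flat-geometry description of the Teichm\"uller ray: if $q$ is the unit quadratic differential at $o$ with vertical foliation $\eta$ and horizontal foliation $\lambda$, then in the thick part the flat length at $g_{t_k}$ is comparable to the hyperbolic length and is itself $\sim_\epsilon e^{t_k} i(\alpha,\eta) + e^{-t_k} i(\alpha,\lambda)$. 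The two-sided bound on $\ell_{g_{t_k}}(\alpha_i^k)$ thus traps the quantity $e^{t_k} i(\alpha_i^k,\eta) + e^{-t_k} i(\alpha_i^k,\lambda)$ between positive constants, forcing $i(\alpha_i^k,\eta) \prec_\epsilon e^{-t_k}$ for every $i$ and $i(\alpha_i^k,\lambda) \succ_\epsilon e^{t_k}$ for the generic $i$. Minsky's inequality at $o$ (Lemma \ref{Minskyinequailty}) then gives $\E_o(\alpha_i^k) \geq i(\alpha_i^k,\lambda)^2/\E_o(\lambda) \succ_\epsilon e^{2 t_k}$, and the matching lower bound for $x_k$ follows from $i(x_k,\lambda) \geq \max_i i(\alpha_i^k,\lambda) \succ_\epsilon e^{t_k}$ by one more application of Minsky.

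For $(\star_3)$ it suffices, since $\eta$ is uniquely ergodic, to show $i(\xi_k,\eta) \to 0$ (Lemma \ref{lemma:uniquelyergodic} then forces $\xi_k \to \eta$ in $\PMF(S)$). Writing $i(\xi_k,\eta) = i(x_k,\eta)/\sqrt{\E_o(x_k)}$ and combining $i(x_k,\eta) \prec_\epsilon (3g-3) e^{-t_k}$ with the lower bound $\sqrt{\E_o(x_k)} \succ_\epsilon e^{t_k}$ just established gives $i(\xi_k,\eta) = O(e^{-2 t_k}) \to 0$. The main obstacle is obtaining the lower bound $i(\alpha_i^k,\lambda) \succ_\epsilon e^{t_k}$ for every individual pants curve rather than just one: the length-trap dichotomy only ensures that at least one of $i(\alpha_i^k,\eta) \succ_\epsilon e^{-t_k}$ or $i(\alpha_i^k,\lambda) \succ_\epsilon e^{t_k}$ holds for each $i$, and the former option is a priori compatible with a tiny $\E_o(\alpha_i^k)$. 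I expect to resolve this either by restricting to the special times at which the Masur--Minsky pants map of \cite{MasurMinsky99} changes value (so every curve in $x_k$ is newly introduced at scale $e^{t_k}$), or by using Rafi's thick-surface formula for extremal length to produce, for each $i$, a bounded-length transversal $\tau_i^k$ at $g_{t_k}$ whose intersection number with $\lambda$ forces the desired lower bound through Minsky applied to $\tau_i^k$.
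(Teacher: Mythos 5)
Your high-level plan matches the paper's: both take Bers pants decompositions at a sequence of thick points along the ray $[o,\eta)$ and set $x_k=\sum_i\alpha_i^k$. The upper bound in $(\star_2)$ by Kerckhoff, and the reduction of $(\star_3)$ to $i(\xi_k,\eta)\to 0$ via Lemma~\ref{lemma:uniquelyergodic}, are fine. However, the gap you flag at the end is the genuine one, and your proposed fixes do not close it.

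Concretely: for an individual pants curve $\alpha_i^k$ that is bounded at $g_{t_k}$, the length-trap dichotomy gives
$e^{t_k}i(\alpha_i^k,\eta)+e^{-t_k}i(\alpha_i^k,\lambda)\sim 1$, hence only that at least one of the two terms saturates. If the one that saturates is $e^{t_k}i(\alpha_i^k,\eta)$ while $i(\alpha_i^k,\lambda)$ is tiny, then both routes to a lower bound on $\E_o(\alpha_i^k)$ --- Minsky with $\lambda$, and Kerckhoff from $g_{t_k}$ back to $o$ --- only give $\E_o(\alpha_i^k)\succ e^{-2t_k}$, which is useless; a Bers curve at $g_{t_k}$ can indeed be short at $o$. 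Your two sketched remedies are not developed, and neither obviously works: ``newly introduced at scale $e^{t_k}$'' in the Masur--Minsky sense is a curve-graph statement that does not directly control $\E_o$, and the Rafi-transversal idea would bound $\E_o$ of a transversal $\tau_i^k$, not of $\alpha_i^k$ itself.

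The paper closes the gap by a different, combinatorial device. After observing that $\E_o(\beta_{y_k})=\sum_i\E_o(\alpha_i(y_k))\sim_{g,\epsilon}e^{2d(o,y_k)}$ (which already forces at least one summand to be at scale $e^{2d(o,y_k)}$), it \emph{modifies} the pants decomposition: each $\alpha_j(y_k)$ that is too short at $o$ is replaced by a new curve $\alpha_j'(y_k)$ obtained by twisting a transversal around $\alpha_j(y_k)$ enough times to force $\E_o(\alpha_j'(y_k))\sim e^{2d(o,y_k)}$, while keeping disjointness from the other curves so the result is still a pants decomposition. The new $\delta(y_k)=\sum_i\delta_i(y_k)$ then satisfies $(\star_2)$ by construction, and $(\star_3)$ survives the modification because at least one unchanged curve $\alpha_{i_k}(y_k)$ at the right scale already converges to $\eta$ (the paper gets this via the curve graph and Klarreich's theorem rather than via the horizontal foliation $\lambda$), and the extra terms cannot move the projective limit since $\eta$ is uniquely ergodic. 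Also note: the paper's estimate $\E_o(\beta_{y_k})\sim e^{2d(o,y_k)}$ is obtained not through the flat metric and $\lambda$ but through the auxiliary point $t_y$ on the ray towards $[\beta_y]$, Lemma~\ref{lemma:boundeddistance} ($d(y,t_y)\le C_2$, proved with Minsky's extremal-length formula, Theorem~\ref{Minskyextremal}), and Kerckhoff's formula; you may find this more robust than the Rafi-type flat-length expansion because it avoids any dependence on the horizontal foliation of $q$.
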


Any pants curve $x_k$ in Proposition \ref{proposition:pantscurveapp} will be called \textit{a thick pants curve} and the sequence $\{\xi_k(\eta)\}$ will be called \textit{pants curves associated to $\eta$}. This proposition together with the next subsection shows the assumption in Lemma \ref{lemma:log} holds. We prove this proposition by using the systole map which has been considered, for instance, in \cite{MasurMinsky99}. \\

Recall that, thanks to Bers' theorem, there is a constant $C_1=C(g)$, depending only on the genus $g$, such that for every point $x \in \T(S)$, there exists a pants decomposition, namely a collection of $3g-3$ essential simple closed curves $\mathcal{P} = \{\alpha_1,\cdots,\alpha_{3g-3}\}$, such that $$\forall 1\leq i \leq 3g-3,~ \E_{x}(\alpha_i) \leq C_1^2.$$ If $x$ is further assumed to be in $\T_{\epsilon}(S)$, one can choose a collection of $3g-3$ essential simple closed curves $ \{\alpha_1(x),\cdots,\alpha_{3g-3}(x)\} $ on $S$ such that $$\forall 1\leq i \leq 3g-3, \epsilon \leq \E_{x}(\alpha_i(x)) \leq C^{2}_{1}.$$ Let $\beta_x$ be the measured foliation $\beta_x = \sum_{i=1}^{3g-3}\alpha_i(x)$ and $[\beta_x]$ be its projective class in $\PMF(S)$. By Hubbard-Masur, there is a unique unit holomorphic quadratic differential $q = q(\beta_x)$ on $o \in \T(S)$ whose projective class of the vertical measured foliation is $[\beta_x]$. \\

Now given $\eta \in U(\theta,\epsilon)$, by the construction of $U(\theta,\epsilon)$ (cf. the argument after Theorem \ref{theorem:DowDucMas}), the geodesic ray $g_t = [o, \eta)$ cannot leave $\T_{\epsilon}(S)$ eventually. Hence, there is a sequence of points $\{y_k=y_k(\eta) \in \T_{\epsilon}(S)\}$ in $g_t$ tends to $\eta$ in $\bar{\T(S)}$. By the above discussion, we obtain a sequence of pants curves $\{\beta_{y_k}\}$ and a sequence of points $\{[\beta_{y_k}]\}$ in $\PMF(S)$. \\


Let $y$ be any $y_k(\eta)$ and denote $\beta_y = \sum \alpha_i(y)$. Let $g_t$ be the Teichm\"{u}ller geodesic ray starting from $o$ in the direction $q(\beta_y)$. Along the geodesic ray $[o, [\beta_y])$, the moduli of the core curves $\alpha_i(y)$ is multiplied by $e^{2t}$, so by the geometric definition of the extremal length \cite[Page 32]{Kerckhoff80}, $$\E_{g_t}(\alpha_i(y)) \leq e^{-2t}\E_o(\alpha_i(y)).$$ Hence along the geodesic ray in direction $[\beta_y]$, there is a point $t_y$ in the geodesic ray such that $t_y$ has maximal distance with $o$ and $$\forall 1\leq i \leq 3g-3,~ \epsilon \leq \E_{t_y}(\alpha_{i}(y)) \leq C^{2}_{1} .$$

\begin{lemma}\label{lemma:boundeddistance}
	There is a constant $C_2$ depending on $g$ and $\epsilon$ such that $$ d(y, t_y) \leq C_2.$$ 	
\end{lemma}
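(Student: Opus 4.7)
The plan is to invoke Kerckhoff's formula (Lemma~\ref{lemma:Kerckhoffformula}) and bound the ratio $\E_y(\xi)/\E_{t_y}(\xi)$ uniformly over $\xi \in \PMF(S)$, exploiting that $y$ and $t_y$ share the pants decomposition $\{\alpha_i(y)\}$ with extremal length in the bounded range $[\epsilon, C_1^2]$.

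First I would verify that $t_y$ sits in a uniform thick part. Any $\gamma \in \mathcal{C}(S)$ not equal to one of the $\alpha_i(y)$ must cross some $\alpha_i(y)$, and Minsky's inequality (Lemma~\ref{Minskyinequailty}) then gives $\E_{t_y}(\gamma) \geq i(\gamma,\alpha_i)^2/\E_{t_y}(\alpha_i) \geq C_1^{-2}$; combined with $\E_{t_y}(\alpha_i) \geq \epsilon$, this puts $t_y$ in $\T_{\epsilon'}(S)$ with $\epsilon' = \min(\epsilon, C_1^{-2})$. The same argument applies to $y$, so both points lie in a uniform thick part of Teichm\"uller space.

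Next, by Kerckhoff's formula, it suffices to bound $\E_y(\xi)/\E_{t_y}(\xi)$ uniformly in $\xi$. When $\xi = \alpha_i(y)$ the ratio is at most $C_1^2/\epsilon$. For a general $\xi \in \MF(S)$, the plan is to use a Minsky--Rafi type decomposition of extremal length at a thick point with short pants decomposition, schematically of the form
\[ \E_x(\xi) \asymp_{g,\epsilon} \sum_i \Bigl( \frac{i(\xi,\alpha_i)^2}{\E_x(\alpha_i)} + \E_x(\alpha_i)\, i(\xi,\alpha_i)^2\, \mathrm{tw}_{\alpha_i}(\xi,\mu_x)^2 \Bigr), \]
where $\mu_x$ is a short marking at $x$ extending $\{\alpha_i(y)\}$. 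Since the values $\E_y(\alpha_i)$ and $\E_{t_y}(\alpha_i)$ are uniformly comparable, the intersection-number contributions at $y$ and $t_y$ match up to constants depending only on $g$ and $\epsilon$, so the problem reduces to bounding the twist differences $\mathrm{tw}_{\alpha_i}(\mu_y,\mu_{t_y})$.

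The main obstacle is controlling these twist differences. The crucial additional input is that $t_y$ lies on the specific Teichm\"uller geodesic from $o$ in direction $[\beta_y]$: hence the twist at $t_y$ relative to $\{\alpha_i(y)\}$ is prescribed up to bounded error by the horizontal foliation of $q(\beta_y)$ at the base point $o$. Combined with the Bers construction of $\beta_y$ at $y$, which similarly pins down the twists at $y$, this should bound $\mathrm{tw}_{\alpha_i}(\mu_y,\mu_{t_y})$ uniformly in $g$ and $\epsilon$. Plugging back into Kerckhoff's formula then yields $d(y,t_y) \leq C_2(g,\epsilon)$.
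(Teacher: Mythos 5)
Your approach is essentially the same as the paper's: reduce to bounding $\E_y(\xi)/\E_{t_y}(\xi)$ via Kerckhoff's formula (Lemma~\ref{lemma:Kerckhoffformula}) and then invoke Minsky's extremal-length estimate (Theorem~\ref{Minskyextremal}), exploiting that $y$ and $t_y$ share the same Bers pants decomposition $\{\alpha_i(y)\}$ whose extremal lengths lie in the fixed window $[\epsilon,C_1^2]$ at both points. Your preliminary verification that $t_y$ lies in a uniform thick part is correct (and the Minsky-inequality argument $\E_{t_y}(\gamma)\geq i(\gamma,\alpha_i)^2/\E_{t_y}(\alpha_i)\geq C_1^{-2}$ is the right one) but is not spelled out in the paper, nor is it strictly needed once Theorem~\ref{Minskyextremal} is applied.

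Where you genuinely diverge is in explicitly flagging the twist term, and here you have correctly put your finger on something the paper leaves tacit: the estimate behind Theorem~\ref{Minskyextremal}, in its precise ABEM/Minsky form, uses a \emph{metric-dependent} relative twist $\mathrm{tw}_{\alpha_i}(\beta,x)$, so matching the extremal lengths $\E_y(\alpha_i)\asymp\E_{t_y}(\alpha_i)$ does not by itself give comparability of the two maxima; one must also control the relative twisting of $y$ and $t_y$ along each $\alpha_i$. The paper's proof passes directly from ``bounded extremal lengths'' to the conclusion without addressing this. Your recognition of the step is an improvement in care. However, the argument you offer to close it is not complete. The claim that the Bers construction at $y$ ``pins down'' the twists at $y$ is unjustified: Bers' theorem controls only lengths, and a hyperbolic structure with those same lengths could sit anywhere along the Fenchel--Nielsen twist direction of each $\alpha_i$. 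The companion claim, that the twist at $t_y$ is prescribed by the horizontal data of $q(\beta_y)$, is plausible along the ray $[o,[\beta_y])$ (the cylinders of $q$ around the vertical curves carry fixed twist parameters), but you still need to relate the twist data of $q(\beta_y)$ back to the twist data at $y$ itself, and that link is left as ``this should bound $\mathrm{tw}_{\alpha_i}(\mu_y,\mu_{t_y})$'' without an argument. So the proposal isolates the delicate step more explicitly than the paper does, but the key estimate $|\mathrm{tw}_{\alpha_i}(y,t_y)|\lesssim 1$ remains unproven.
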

The proof is based on the following theorem. The theorem is used in the form of \cite[Theorem 5.3]{ABEM}. For the definition of twist numbers $tw(\alpha,\beta)$, the reader is referred to \cite{PennerHarer}.
\begin{theorem}[\cite{Minsky96}]\label{Minskyextremal}
	Let $x \in \T(S)$ and $\mathcal{P} = \{\alpha_1, \cdots, \alpha_{3g-3}\}$ be a pants decomposition produced by the Bers' theorem	mentioned above. Then for any simple closed curve $\beta$, 
	\begin{equation}\label{equation:extremal}
	\E_{x}(\beta) \sim_g \max_{1 \leq i \leq 3g-3}\left( \frac{i^2(\beta, \alpha_i)}{\E_{x}(\alpha_i)} + tw^2(\beta,\alpha_i)\E_{x}(\alpha_i) \right).	
	\end{equation}
\end{theorem}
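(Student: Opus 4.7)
The strategy is to prove both directions of the $\sim_g$ equivalence via a collar decomposition of $X$ adapted to the Bers pants decomposition $\mathcal{P}$, reducing the problem to an explicit computation of annular extremal length. Around each $\alpha_i$ fix a maximal embedded conformal annulus $A_i \subset X$ with core homotopic to $\alpha_i$; by the collar lemma combined with the Bers bound $\E_{x}(\alpha_i) \leq C_1^2$, its modulus $M_i$ satisfies $M_i \sim_g 1/\E_{x}(\alpha_i)$ (if $\alpha_i$ is short, $M_i$ grows like $1/\E_{x}(\alpha_i)$; if $\alpha_i$ has bounded extremal length, $M_i$ is itself bounded above and below by constants depending only on $g$). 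The complement $X \setminus \bigcup_i A_i$ is a finite union of pairs of pants with uniformly bounded geometry depending only on $g$.

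For the lower bound, for each $i$ the conformal inclusion $A_i \hookrightarrow X$ restricts the free homotopy class of $\beta$ to a family $\Gamma_i$ of proper curves in $A_i$ consisting of $k_i := i(\beta,\alpha_i)$ essential cross-arcs and $n_i := tw(\beta,\alpha_i)$ twists about the core. Monotonicity of extremal length under conformal embeddings gives $\E_{A_i}(\Gamma_i) \leq \E_{x}(\beta)$. A standard quadratic minimization on the flat cylinder model of $A_i$---lifting $\Gamma_i$ to the universal cover, where representatives become straight segments winding $n_i$ times while crossing $k_i$ times, and applying Cauchy--Schwarz to any unit-area conformal metric---yields
\[
\E_{A_i}(\Gamma_i) \;\sim_g\; \frac{k_i^2}{M_i} + n_i^2 M_i.
\]
Substituting $M_i \sim_g 1/\E_{x}(\alpha_i)$ and maximizing over $i$ produces the desired lower bound on $\E_{x}(\beta)$.

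For the upper bound, I would use the dual characterization $\E_{x}(\beta) = 1/\mathrm{mod}(\beta)$ valid for simple closed curves, where $\mathrm{mod}(\beta)$ is the supremum over moduli of embedded conformal annuli in $X$ with core homotopic to $\beta$. The plan is to construct such an annulus $A^{\ast}$ explicitly by gluing: in each collar $A_i$ take a sub-annulus realizing the $k_i$ crossings and $n_i$ twists of $\beta$; in each thick pair of pants connect these pieces by annular thickenings of the arcs of $\beta$, which have uniformly bounded geometry. A modulus estimate for the resulting composite annulus, via Gr\"otzsch's inequality applied to the $3g-3$ ingredients, then yields $\mathrm{mod}(A^{\ast}) \succ_g 1/\max_i(\cdots)$; the finite number of pieces being glued only inflates the estimate by a factor absorbed in the $\sim_g$ constant.

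The main obstacle is this upper bound construction: one must verify that the gluing of $3g-3$ annular pieces produces a single embedded annulus whose modulus is controlled by the maximum single term rather than degrading as a sum over $i$. This is where both the disjointness of the collars $A_i$ and the bounded geometry of the thick pants are essential, since together they ensure that the critical bottleneck in the modulus comes from a single collar. A secondary subtlety is the precise extremal length computation on a crossed cylinder, where both crossing and twisting families coexist; this is most transparent via the explicit Jenkins--Strebel quadratic differential on the model cylinder whose horizontal foliation realizes the combined family $\Gamma_i$.
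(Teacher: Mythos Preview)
The paper does not prove this statement: it is quoted as a result of Minsky \cite{Minsky96}, in the form recorded in \cite[Theorem~5.3]{ABEM}, and is used as a black box in the proof of Lemma~\ref{lemma:boundeddistance}. There is therefore no proof in the paper to compare your proposal against.

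That said, your strategy is the one Minsky himself uses: collar annuli around the $\alpha_i$, a metric supported on a single collar for the lower bound, and an explicit embedded annulus for the upper bound. One slip worth flagging: with your convention $M_i \sim_g 1/\E_x(\alpha_i)$, the flat-cylinder computation actually gives
\[
\E_{A_i}(\Gamma_i)\;\sim_g\; k_i^2\,M_i + \frac{n_i^2}{M_i},
\]
not $k_i^2/M_i + n_i^2 M_i$ as you wrote (on the model cylinder $[0,M_i]\times(\mathbb{R}/\mathbb{Z})$ a crossing arc has length $M_i$ and a full twist has length $1$, so the $k_i$ arcs with total twist $n_i$ have minimal total length $\sqrt{k_i^2 M_i^2 + n_i^2}$, area $M_i$). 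After substituting $M_i \sim 1/\E_x(\alpha_i)$ your version would interchange the roles of $i(\beta,\alpha_i)$ and $tw(\beta,\alpha_i)$ in \eqref{equation:extremal}. For the upper bound, the issue you isolate---that the glued annulus must have modulus governed by the single worst collar rather than degrading as a sum over all $i$---is exactly the crux; Minsky handles it not by a Gr\"otzsch estimate on a single composite object but by exhibiting, for whichever $i$ attains the maximum, a specific embedded annulus in $X$ whose modulus realizes that term up to a genus constant.
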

\begin{proof}[Proof of Lemma \ref{lemma:boundeddistance}]
	By Kerckhoff's formula, we only need to bound the ratio $\frac{\E_{y}(\beta)}{\E_{ t_y}(\beta)}$ for any essential simple closed curve $\beta$ on $S$. However, by the construction of $\{\alpha_i(y)\}$, the two hyperbolic surfaces $t_y$ and $y$ have the same pants decomposition$ \{ \alpha_1(y), \cdots, \alpha_{3g-3}(y)\}$ satisfying the condition in Theorem \ref{Minskyextremal}.  Since, for $1 \leq i \leq 3g-3$, both $\E_{y}(\alpha_i)$ and $\E_{t_y}(\alpha_i)$ are bounded below by the constant $\epsilon$ and above by a constant $C^{2}_{1}$ depending only on $g$, one can complete the proof of the lemma by using  Equation (\ref{equation:extremal}).
\end{proof}

Using Theorem \ref{Minskyextremal} again, $\E_{t_y}(\beta_y) \sim_{g,\epsilon} \E_{y}(\beta_y) \sim_{g,\epsilon} 1$. Since $[\beta_y]$ is the direction from $o$ to $t_y$, $\E_{t_y}(\beta_y) = e^{-2t}\E_{o}(\beta_y)$, hence $\E_{o}(\beta_y) \sim_{g,\epsilon} e^{2t}$ where $t = d(o,t_y)$. By Lemma \ref{lemma:boundeddistance} and triangle inequality, $t \sim_{g,\epsilon} d(o,y)$.
\begin{proof}[Proof of Proposition \ref{proposition:pantscurveapp}]
According to above discussions, for each $\eta \in U(\theta,\epsilon)$, we have a sequence $\{y_k = y_k(\eta) \in \T(S)\}$ and for each $y_k$, we have $\beta_{y_k} = \sum_i \alpha_i(y_k)$. For each $k$, $\{\alpha_i(y_k)\}^{i= 3g-3}_{i=1}$ is a pants decomposition of $S$ and by the discussion above, $\E_o(\beta_{y_k}) \sim_{g,\epsilon} e^{2d(o,y_k)}$. We first show that the sequence $\{\frac{\beta_{y_k}}{\E_o(\beta_{y_k})}\}$ converges to $\eta$.  Indeed, since $\eta$ is uniquely ergodic, hence by \cite[Proposition 2.4]{HamenstadtStability}, the projection of $[o,\eta)$ to the curve graph is unbounded and the projection of the sequence $y_k$, up to a subsequence, converges in the curve graph to $\eta$. By \cite[Theorem 1.4]{klarreich2018boundary}, $\beta_{y_k}$, up to a subsequence, converges in $\PMF(S)$ to $\eta$. Notice that, the fact that $\{\frac{\beta_{y_k}}{\E_o(\beta_{y_k})}\}$ converges to $\eta$ also implies that for eack $i$, $\{\frac{\alpha_i(y)}{\E_o{\alpha_i(y)}}\}$ converges to $\eta$ as well since $\eta$ is uniquely ergodic. We have to verify $(\star_2)$ in Proposition \ref{proposition:pantscurveapp}. If there is a subsequence, still denote by $\{y_k\}$, of $\{y_k\}$ such that for each $k$ and each $1 \leq i \leq 3g-3$, $\E_o(\alpha_i(y_k)) \sim_{g,\epsilon} e^{2d(o,y_k)}$, then we are done. Otherwise, since $\E_o(\beta_{y_k}) \sim_{g,\epsilon} e^{2d(o,y_k)}$ and $\E_o(\beta_{y_k}) = \sum_i \E_o(\alpha_i(y_k))$, for each $k$, there is $i_k \in \{1,2,\cdots,3g-3\}$ such that $\E_o(\alpha_{i_k}(y_k)) \sim_{g,\epsilon} e^{2d(o,y_k)}$. For each $k$, we are now going to modify the pants decomposition $\{\alpha_j(y_k)\}$ as follows. Write $\beta_{y_k} = \alpha_1(y_k) + \alpha_2(y_k) + \cdots + \alpha_{m}(y_k) + \cdots + \alpha_{3g-3}(y_k)$ such that the first $m$ terms has extremal length less than $e^{2d(o,y_k)}$ but the last $3g-3-m$ terms are comparable with $e^{2d(o,y_k)}$ (the constants involved only depends on $o,\epsilon$). So $i_k > m$. For $j = 1$, find another curve $\alpha'_j(y_k)$ so that $\alpha'_{j}(y_k)$ intersects only with $\alpha_1(y_k)$ among the pants curve $\{\alpha_i(y_k)\}$ and has extremal length $\E_o(\alpha'_{j}(y_k))$ comparable with $e^{2d(o,y_k)}$.  The simple closed curve $\alpha'_{j}(y_k)$ can be constructed by enough many twists since $\E_o(\alpha_1(y_k))$ less than $e^{2d(o,y_k)}$. Then for $j = 2$,  find another curve $\alpha'_j(y_k)$ so that $\alpha'_{j}(y_k)$ intersects only with $\alpha_2(y_k)$ among the pants curve $\{\alpha'_1(y_k), \alpha_2(y_k), \cdots, \alpha_{3g-3}(y_k)\}$ and has extremal length $\E_o(\alpha'_{j}(y_k))$ comparable with $e^{2d(o,y_k)}$. We continue this process for $m$ terms. Finally we have, for each $k$, a pants decomposition $\{\alpha'_1(y_k),\cdots,\alpha'_{m}(y_k),\alpha_{m+1},\cdots,\alpha_{3g-3}(y_k)\}$. Denote this decomposition by $\{\delta_i(y_k)\}$ and $\delta(y_k) = \sum_i \delta_i(y_k)$. On the one hand, by the construction and the fact that $\E_o(\delta(y_k)) = \sum_i \E_o( \delta_i(y_k))$, the condition $(\star_2)$ in Proposition \ref{proposition:pantscurveapp} holds. On the other hand, since $\alpha_{i_k}(y_k)$ is in $\{\delta_i(y_k)\}$, up to a subsequence, $\{\frac{\delta(y_k)}{\E_o(\delta(y_k))}\}$ has limit in $\PMF(S)$ the same as $\frac{\alpha_{i_k}(y_k)}{\E_o(\alpha_{i_{k}}(y_k))}$. Hence $\{\frac{\delta(y_k)}{\E_o(\delta(y_k))}\}$ converges to $\eta$. So we complete the proof of Proposition \ref{proposition:pantscurveapp}. 
\end{proof}

\subsection{Regularity at pants curves.}\label{sec:regularitypants}
We are now in a position to prove that the assumptions in Lemma \ref{lemma:approximation} and Lemma \ref{lemma:log} hold. \\

\noindent {\bf More conventions:}~~~ From now on, we will use the hyperbolic length function $\ell_o(\cdot)$. Since $\ell_o^2(\cdot) \sim_{o} \E_{o}(\cdot)$, we can use $\ell_o(\cdot)$ to replace $\E_o(\cdot)$ without affecting the result when we defining the measure $\nu_o$, the embedding $\tau : \PMF(S) \longrightarrow \MF(S)$ and $\xi_{k}(\gamma)$. For instance, for a measurable subset $U \in \PMF(S)$, we have $$\nu_0(U) = \mu(\{\eta: [\eta] \in U, \ell_{o}(\eta) \leq 1 \}).$$  \\

\noindent {\bf Set-up 0}: Let $\alpha = \{\alpha_1, \cdots, \alpha_{3g-3}\}$ be a pants decomposition of $S$ and consider it to be a measured foliation still denoted by $\alpha$. Then $[\alpha]$ defines a unit holomorphic quadratic differential $q$ on $o$, namely the unique $q$ such that $[\mathcal{V}(q)] = [\alpha]$. Let $\xi = \frac{\alpha}{\ell_{o}(\alpha)}$, then $\xi$ is the image of $[\alpha]$ under $\tau$. We denote $g_t$ the Teichm\"{u}ller geodesic defined by $q$. We assume $\alpha$ is a thick pants curve, namely, for all $i \in \{1,\cdots,3g-3\}$, both $\ell_{o}(\alpha_i)$ and $\ell_{o}(\alpha)$ is bounded bleow and above, up to multiplicative constants depending only on $g,o,\epsilon$, by $e^{T}$ for some $T$ (cf. Proposition \ref{proposition:pantscurveapp}).\\

\begin{theorem}\label{theorem:regularity}
	Under the above {\bf Set-up 0}, there exist $M_0 > 0, C > 0$ and $D > 0$, depending on $g,o,\epsilon$ such that when $M < M_0$, we have $$C M^{\frac{h}{2}} \leq \nu(\{\eta \in \PMF(S): i(\eta, \xi) \leq M\}) \leq D M^{\frac{h}{2}}.$$
\end{theorem}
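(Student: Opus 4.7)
My plan is to introduce Dehn--Thurston coordinates on $\MF(S)$ adapted to the pants decomposition $\alpha=\{\alpha_1,\ldots,\alpha_{3g-3}\}$ and compute the measure of the relevant set by direct integration. For $\eta\in\MF(S)$, set $m_i(\eta)=i(\eta,\alpha_i)$ and let $t_i(\eta)$ be a twist coordinate about $\alpha_i$. On each top-dimensional train-track chart the Thurston measure is Lebesgue in $(m_1,\ldots,m_{3g-3},t_1,\ldots,t_{3g-3})$ up to a multiplicative constant depending only on the topology of $S$. Using the definition $\nu_o(U)=\nu_{Th}(\{\eta:[\eta]\in U,\ell_o(\eta)\le1\})$, the question reduces to a Thurston-measure computation on a subset of $\MF(S)$.

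The key analytic input is Minsky's product-region estimate for extremal length. Because $\alpha$ is a thick pants curve, by the argument preceding Lemma \ref{lemma:boundeddistance} there is a point $y_T$ along the Teichm\"uller geodesic from $o$ in direction $q(\alpha)$, at distance $\sim T$ from $o$, at which $\{\alpha_i\}$ is Bers with $\E_{y_T}(\alpha_i)\sim_{g,o,\epsilon}1$. Applying Theorem \ref{Minskyextremal} at $y_T$ gives $\E_{y_T}(\eta)\sim_{g,o,\epsilon}\max_i(m_i^2+t_i^2)$, and propagating back to $o$ along this geodesic (each $(m_i,t_i)$-factor scales with $\E(\alpha_i)$) yields
\[
\E_o(\eta) \sim_{g,o,\epsilon} \max_i\bigl(m_i^2 e^{-2T}+t_i^2 e^{2T}\bigr).
\]
Consequently the constraint $\ell_o(\eta)\le 1$ is equivalent, up to uniform constants, to $m_i\prec_{g,o,\epsilon} e^T$ and $|t_i|\prec_{g,o,\epsilon} e^{-T}$ for every $i$.

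Since $\xi=\alpha/\ell_o(\alpha)$ with $\ell_o(\alpha)\sim_{g,o,\epsilon}e^T$, and $\tau[\eta]=\eta/\ell_o(\eta)$, the condition $i(\tau[\eta],\xi)\le M$ becomes $\sum_i m_i \le M\,\ell_o(\alpha)\,\ell_o(\eta) \prec_{g,o,\epsilon} Me^T$. The Thurston measure of the set of $\eta$ satisfying this together with $\ell_o(\eta)\le 1$ is then comparable to
\[
\int_{\{\sum m_i\le CMe^T,\; m_i\ge 0\}}\prod_i dm_i\cdot\int_{\{|t_i|\le Ce^{-T}\}}\prod_i dt_i.
\]
For $M$ below a threshold $M_0=M_0(g,o,\epsilon)$, the $m$-integral is a $(3g-3)$-simplex of volume $\sim (Me^T)^{3g-3}$ and the $t$-integral a box of volume $\sim e^{-T(3g-3)}$; their product is $\sim M^{3g-3}=M^{h/2}$, with constants independent of $T$. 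The matching lower bound follows from the explicit subregion $0\le m_i\le cMe^T/(3g-3)$, $|t_i|\le ce^{-T}$ for $c$ small enough that $\E_o(\eta)\lesssim 1$ holds automatically.

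The main obstacle will be rigorously transferring Minsky's formula from $y_T$ back to $o$: the decomposition $\{\alpha_i\}$ is not Bers at $o$, so Theorem \ref{Minskyextremal} does not apply there directly. I would handle this via Minsky's product-region theorem, which in the thin-part neighborhood of $[\alpha]$ decouples the extremal-length function into one-variable contributions scaling with $\E(\alpha_i)$; Rafi's subsurface-combinatorial extremal-length formula gives the same conclusion. A secondary bookkeeping issue is the piecewise-linear structure of Dehn--Thurston coordinates at the walls $m_i=0$: one must decompose $\MF(S)$ into finitely many train-track charts and sum the contributions, each producing the same $M^{h/2}$ scaling with uniform constants.
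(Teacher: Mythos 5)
Your overall strategy matches the paper's: use Dehn--Thurston coordinates adapted to $\alpha$, observe the Thurston measure is Lebesgue in those coordinates, bound the $m_i$'s by the intersection constraint and the $t_i$'s by the length normalization, and compute the volume of the resulting simplex-times-box. Two of your steps, however, do not go through as written. First, you estimate $\E_o(\eta)$ via Minsky's Theorem \ref{Minskyextremal}, but that theorem requires $\{\alpha_i\}$ to be a \emph{Bers} pants decomposition at $o$, i.e.\ $\E_o(\alpha_i)\lesssim 1$. In {\bf Set-up 0} the exact opposite is true: $\alpha$ is a \emph{thick} pants curve, with $\E_o(\alpha_i)\sim e^{2T}\to\infty$. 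You flag this and propose Minsky's product-region theorem or Rafi's formula as the patch, but those results describe the extremal-length function in the \emph{thin} part of $\T(S)$ near $[\alpha]$, where $\alpha_i$ are short; they do not give a formula at the fixed thick point $o$ where $\alpha_i$ are long, and there is no simple ``propagation'' of $\E_{y_T}(\eta)$ back to $\E_o(\eta)$ along the Teichm\"uller geodesic that is sharper than Kerckhoff's formula $e^{-2T}\le\E_o(\eta)/\E_{y_T}(\eta)\le e^{2T}$. The paper avoids this entirely by switching in Section~\ref{sec:regularitypants} to the hyperbolic-length normalization $\ell_o$ and invoking the explicit Kerckhoff--Nielsen formula (cf.\ \cite[Lemma 3.2]{Kerckhoff_Nielsen}) for $\ell_o(\eta)$ in terms of Fenchel--Nielsen data of $o$ and Dehn--Thurston data of $\eta$; that formula is valid for an arbitrary pants decomposition and directly yields the one-sided bound $t_i\ell_o(\alpha_i)\lesssim 1$ needed for the upper bound, as well as the two-sided control needed for the lower bound.

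Second, your lower-bound box $\{0\le m_i\le cMe^T/(3g-3),\ |t_i|\le ce^{-T}\}$ is centered at the origin, and this is not a subset of the relevant cone. The set whose $\nu$-measure you are computing is the cone over $\{\eta:\ell_o(\eta)=1,\ i(\eta,\xi)\le M\}$, that is $\{\eta:\ell_o(\eta)\le 1,\ i(\eta,\xi)\le M\,\ell_o(\eta)\}$; near the origin $\ell_o(\eta)$ is small, so the condition $i(\eta,\xi)\le M\,\ell_o(\eta)$ is \emph{not} implied by $\sum_i m_i\lesssim cMe^T$. Concretely, a point with $m_i\sim cMe^T$ and all $t_i=0$ has (using your own formula) $\ell_o(\eta)\sim cM$ and $i(\eta,\xi)\sim cM$, so $i(\eta,\xi)/\ell_o(\eta)\sim 1$, which violates $\le M$ for small $M$. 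You only verify the constraint $\E_o(\eta)\lesssim 1$ and silently drop the normalized-intersection constraint. The paper's construction centers the box at $\eta^0=\tfrac{1}{3g-3}\xi$, which has $m_i(\eta^0)=0$ and $t_i(\eta^0)\sim e^{-T}$, so that $\ell_o(\eta)$ is uniformly bounded below over the translated box; this is precisely what makes $i(\eta,\xi)\le M\ell_o(\eta)$ hold uniformly once the multiplicative constant $\kappa(g)$ is chosen small.
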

The main tool to prove the above theorem is the following Dehn-Thurston theorem.  Let $P = \{\alpha_k\}$ be a pants decomposition. For each $\alpha_k$, let $m_k: \MF(S) \longrightarrow \mathbb{R}_{\geq 0}, \xi \mapsto i(\alpha_k,\xi)$ be the intersection function defined by $\alpha_k$ and $t_k =tw_{k}$ be the twist function associated to $\alpha_k$.
\begin{theorem}[The Dehn-Thurston theorem \cite{PennerHarer},Theorem 3.1.1]\label{theorem:Dehn-Thurston}
	Let $S = S_g$ and $\alpha =\{\alpha_1, \cdots, \alpha_{3g-3}\}$ be a pants decomposition of $S$. Then the map
	\begin{equation}
	\begin{aligned}
	& \varpi: \MF(S) \longrightarrow \mathbb{R}^{6g-6}\\
	&\mathcal{F} \mapsto (m_1(\mathcal{F}),\cdots m_{3g-3}(\mathcal{F}), t_1(\mathcal{F}),\cdots, t_{3g-3}(\mathcal{F})).
	\end{aligned}
	\end{equation}
	gives a global coordinate for $\MF(S)$.	
\end{theorem}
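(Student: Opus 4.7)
The plan is to transport the problem to the Dehn--Thurston coordinates $\varpi = (m_1, \ldots, m_{3g-3}, t_1, \ldots, t_{3g-3})$ adapted to the pants decomposition $\alpha = \{\alpha_1, \ldots, \alpha_{3g-3}\}$, in which the Thurston measure on $\MF(S)$ is (a constant multiple of) Lebesgue measure, and then compute the relevant volume by an explicit integral. Write $L = \ell_o(\alpha)$, $\mu_i = \E_o(\alpha_i)$, and set $A_M = \{\eta \in \PMF(S): i(\eta, \xi) \leq M\}$. Since $\xi = \alpha/L = \sum_i \alpha_i/L$, the intersection function is linear in the $m_i$: $i(\zeta, \xi) = \sum_i m_i(\zeta)/L$. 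By Minsky's extremal length formula (Theorem \ref{Minskyextremal}), $\E_o(\zeta) \sim_g \max_i \bigl(m_i^2/\mu_i + t_i^2 \mu_i\bigr)$. Crucially, the thick pants hypothesis $(\star_2)$ of Proposition \ref{proposition:pantscurveapp} gives $\mu_i \sim_{g,o,\epsilon} L^2$ for every $i$, so all of the ellipses governing the length constraint live on a common scale set by $L$.

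\textbf{Reduction to a Thurston volume.} Let $N(M) := \nu_{Th}(\{\zeta \in \MF(S): i(\zeta,\xi) \leq M,\ \ell_o(\zeta) \leq 1\})$. Unraveling $\nu(U) = \nu_{Th}(\{\zeta: [\zeta] \in U,\ \ell_o(\zeta) \leq 1\})$ and the fact that $\tau$ normalizes to $\ell_o = 1$, one computes
\[
\nu(A_M) = \nu_{Th}\bigl(\{\zeta: i(\zeta,\xi) \leq M\ell_o(\zeta),\ \ell_o(\zeta) \leq 1\}\bigr).
\]
Because $M\ell_o(\zeta) \leq M$ on this domain, the upper bound $\nu(A_M) \leq N(M)$ is immediate. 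For the lower bound, $\{i \leq M/2,\ 1/2 \leq \ell_o \leq 1\} \subset \{i \leq M\ell_o,\ \ell_o \leq 1\}$, and the $h$-homogeneity $\nu_{Th}(cZ) = c^h \nu_{Th}(Z)$ yields $\nu_{Th}(\{i \leq M/2,\ \ell_o \leq 1/2\}) = (1/2)^h N(M)$, so $\nu(A_M) \geq N(M/2) - (1/2)^h N(M)$. Thus it suffices to show $N(M) \sim_{g,o,\epsilon} M^{h/2}$ for small $M$, because then $N(M/2) - (1/2)^h N(M) \sim M^{h/2}$ since $(1/2)^{h/2} > (1/2)^h$.

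\textbf{Volume computation.} In Dehn--Thurston coordinates, $N(M)$ equals, up to a universal constant, the Lebesgue measure of
\[
R_M := \Bigl\{(m_i, t_i)_{i=1}^{3g-3}: m_i \geq 0,\ \sum_i m_i \leq ML,\ \max_i \bigl(m_i^2/\mu_i + t_i^2 \mu_i\bigr) \leq c\Bigr\}
\]
for some $c = c(g,o,\epsilon)$. For $M \leq M_0$ small, the bound $m_i \leq ML$ forces $m_i^2/\mu_i \leq M^2 \ll c$, so the $t_i$-range inside the ellipse is $|t_i| \lesssim 1/\sqrt{\mu_i} \sim 1/L$ with constants depending only on $(g,o,\epsilon)$. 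Integrating out $t$ produces a factor $\prod_i 2\sqrt{(c - m_i^2/\mu_i)/\mu_i} \sim_{g,o,\epsilon} L^{-(3g-3)}$, and integrating $m$ over the simplex of volume $(ML)^{3g-3}/(3g-3)!$ yields
\[
\mathrm{Vol}(R_M) \sim_{g,o,\epsilon} \frac{1}{L^{3g-3}} \cdot (ML)^{3g-3} = M^{3g-3} = M^{h/2}.
\]

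\textbf{Main obstacle.} The delicate point is uniformity: the constants $C$ and $D$ must depend only on $(g,o,\epsilon)$, not on the specific thick pants curve $\alpha$ (equivalently, not on $L$). The exact cancellation of $L^{3g-3}$ between the $m$-simplex volume $(ML)^{3g-3}$ and the $t$-integrations contributing $L^{-(3g-3)}$ is the heart of the argument, and it works precisely because the thick hypothesis collapses all the ellipse scales $\mu_i$ into a single $L^2$-window; each ellipse $\{m^2/\mu + t^2\mu \leq c\}$ has area $\pi c$ independent of $\mu$, reflecting the underlying symmetry. The remaining bookkeeping---tracking the Minsky constant in $\sim_g$, the equivalence $\ell_o^2 \sim_o \E_o$, the extension of Minsky's formula from simple closed curves to $\MF(S)$ by continuity, and the universal constant relating $\nu_{Th}$ to Lebesgue in Dehn--Thurston coordinates---should be routine.
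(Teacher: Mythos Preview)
Your proposal does not address the stated theorem at all. The Dehn--Thurston theorem asserts that the map $\varpi$ furnishes a global coordinate system for $\MF(S)$, i.e., that every measured foliation is uniquely determined by its intersection numbers $m_i$ and twist numbers $t_i$ relative to the pants decomposition $\alpha$. Proving this requires showing that $\varpi$ is injective (distinct foliations have distinct coordinates) and characterizing its image; it is a structural statement about measured foliations, established in \cite{PennerHarer} via train tracks. Nothing in your write-up touches this: you never argue that $\varpi$ is injective, surjective onto anything, or a homeomorphism. Instead you \emph{assume} the Dehn--Thurston coordinates exist and use them to compute a volume.

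What you have actually sketched is a proof of Theorem~\ref{theorem:regularity}, the regularity estimate $\nu(\{\eta: i(\eta,\xi)\leq M\}) \sim M^{h/2}$ for thick pants curves $\xi$. The paper does not prove the Dehn--Thurston theorem either---it is quoted from \cite{PennerHarer} precisely as the tool that makes Theorem~\ref{theorem:regularity} tractable. If your intent was Theorem~\ref{theorem:regularity}, then your approach is broadly in line with the paper's: both pass to Dehn--Thurston coordinates, observe that $i(\eta,\xi)$ becomes the linear functional $\sum_i m_i/L$, bound the twist coordinates by $|t_i| \lesssim 1/\ell_o(\alpha_i)$, and exploit the thick hypothesis so that the $L$-factors cancel between the $m$-simplex and the $t$-box. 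The main difference is that you invoke Minsky's formula (Theorem~\ref{Minskyextremal}) to control the set $\{\ell_o \leq 1\}$, whereas the paper appeals directly to hyperbolic length estimates from \cite{Kerckhoff_Nielsen}; and you handle upper and lower bounds in one unified volume computation, while the paper separates them, building an explicit box around $\eta^0 = \frac{1}{3g-3}\xi$ for the lower bound. But none of this is a proof of the statement you were given.
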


There are various (equivalent) definitions for the Thurston measure $\nu_{Th}$ on $\MF(S)$. For doing computation later, the Dehn-Thurston theorem then enable us to define it to be the measure $\frac{1}{n!} \omega^{n}$ induced by the symplectic form $\omega = dm_1 \wedge dt_1 + \cdots + dm_{3g-3} \wedge dt_{3g-3}$. Notice that, although the symplectic form $\omega$ depends on $\alpha$, the meausre does not since $\MF(S)$ has piecewise integral linear structure \cite[Section 3.1]{PennerHarer} which means different pants decompositions give the same measure. Hence the constants does not depend on $\xi$ in Theorem \ref{theorem:regularity}. We now denote $\mu = \nu_{Th}$ with $\alpha$ is fixed to the one given in {\bf Set-up 0}. Note that $\frac{h}{2} = 3g-3$. We now prove the theorem.
\begin{proof}[Proof of Theorem \ref{theorem:regularity}]
	By Lemma \ref{Minskyinequailty}, for every two elements $\xi$ and $\eta$ in $\PMF(S)$, the intersection number $i(\eta,\xi) \leq 1$ and $1$ is achievable.  The proof is then divided into two parts. In the sequel, denote $a = \frac{1}{\sum_i\ell_{o}(\alpha_i)} $ and $\ell$ to be $$ \ell = \frac{1}{a\prod\left(\ell_{o}(\alpha_i)\right)^{\frac{2}{h}}} = \frac{\sum_i\ell_{o}(\alpha_i)}{\prod\left(\ell_{o}(\alpha_i)\right)^{\frac{2}{h}}}.$$ Then by our assumption (or cf. $(\star_2)$ in Proposition \ref{proposition:pantscurveapp} ), there exists $A_1 > 0$ and $B_1 > 0$ depending on $g, o, \epsilon$, such that $B_1 \leq \ell \leq A_1$.\\
	\noindent {\bf Upper bound:}~~~Namely, $\nu(\{\eta \in \PMF(S): i(\eta, \xi) \leq M\}) \leq D M^{\frac{h}{2}}$.
	
	Let $M \leq M_0$ and $M_0 = \frac{1}{4}$. By the definition of $\nu$ and $i(\eta,\xi) = a\sum^{3g-3}_{k=1}i(\eta,\alpha_k)$, we have,
	\begin{equation}\label{equation:regularityup1}
	\begin{aligned}
	&\nu\left(\left\{\eta \in \PMF(S): i(\eta, \xi) \leq M\right\}\right) \\
	&= \mu\left(\left\{t\eta \in \MF(S): i(\eta, \xi) \leq M, \ell_{o}(\eta) = 1, 0 \le t \le 1\right\}\right) (\mbox{by definition})\\
	&=\mu\left(\left\{t\eta \in \MF(S): a\sum_k m_k(\eta) \leq M, \ell_{o}(\eta) = 1,  0 \le t \le 1\right\}\right)\\
	&\leq \mu\left(\left\{ \eta: \forall k, m_k(\eta) \leq \frac{M}{a}, t_i(\eta)\ell_o(\alpha_i) \leq A_2 \right\}\right),
	\end{aligned}
	\end{equation}
	
	\noindent where $A_2$ is a constant depending only on $o$.  The last step comes from the fact that large twists will make the length to be large (cf. \cite[Lemma 3.2]{Kerckhoff_Nielsen} for a explicit formula for computing hyperbolic length). Thus there exists $D$, depends only on $g,o,\epsilon$, such that 
	\begin{equation}
	\begin{aligned}
	&\nu\left(\left\{\eta \in \PMF(S): i(\eta, \xi) \leq M\right\}\right)\\
	&\leq \mu\left( \left\{(m_1,\cdots, m_{3g-3},t_1,\cdots,t_{3g-3}): \forall k, m_k \leq \frac{M}{a}, t_k \leq \frac{A_2}{\ell_{o}(\alpha_k)}\right\}\right) (\mbox{by (\ref{equation:regularityup1})})\\
	&\leq A_3 M^{3g-3}\frac{1}{a^{3g-3}\prod^{3g-3}_{k=1}\ell_{0}(\alpha_k)}(\mbox{by the definition of $\mu$})\\
	&= A_3 M^{3g-3}\ell^{3g-3}\\
	&\leq A_3M^{3g-3}A_1^{3g-3} (\mbox{since $B_1 \leq \ell \leq A_1$})\\
	&\leq DM^{\frac{h}{2}}.
	\end{aligned}
	\end{equation}
 
	\noindent {\bf Lower bound:} ~~~ Namely, $C M^{\frac{h}{2}} \leq \nu(\{\eta \in \PMF(S): i(\eta, \xi) \leq M\})$.\\
 
	In order to bound the measure from below, we will construct a subset contained in the set. First fix, for each $i$, a positive orientation for $\alpha_i$. Let  
	\begin{equation}
	\begin{aligned}
	&V =\left\{t\eta \in \MF(S): i(\eta, \xi) \leq M, \ell_{o}(\eta) = 1, 0 \le t \le 1\right\}.
	\end{aligned}
	\end{equation}
	Then $\eta^0 = \frac{1}{3g-3}\xi$ is in $V$. Let $a$ and $M$ as above, and $\delta > 0$ be a positive number. Let $\kappa(g)$ be a sufficiently small positive number depending only on $o$. Define a set of $6g-6$-tuples by 
	\begin{displaymath}
	\begin{aligned}
	&W_0(a, M,\delta) = \\
	&\{ (x_1,\cdots,x_{3g-3},y_1,\cdots,y_{3g-3}): \forall i, 0 \leq ax_i \leq \kappa(g)M, 0 \leq y_i\ell_o(\alpha_i) \leq \delta\}.
	\end{aligned}
	\end{displaymath}
	Let also $\varpi$ be the coordinate map in Theorem \ref{theorem:Dehn-Thurston}. Then, on the one hand, by hyperbolic geometry (cf. \cite[Lemma 3.2]{Kerckhoff_Nielsen}), there is $M_0 > 0$ and $\delta_0 = \delta_0(M_0)$, depending on $o$, such that for all $M \leq M_0$, one has 
	\begin{displaymath}
	\begin{aligned}
	&\varpi^{-1}(\varpi(\eta^0) + W_0(a,M,\delta_0)) \subset V.
	\end{aligned}
	\end{displaymath}
	
	Notice that one could choose $a$ large enough such that $\varpi^{-1}$ is a homeomorphism on $\varpi(\eta^0) + W_0(a,M,\delta_0)$. Therefore $\mu(V) \geq \nu(\varpi^{-1}(\varpi(\eta^0) + W_0(a,M,\delta_0)))$. On the other hand, 
	\begin{displaymath}
	\begin{aligned}
	& \nu(\varpi^{-1}(\varpi(\eta^0) + W_0(a,M,\delta_0))) \\
	&= \nu( \varpi^{-1}(W_0(a,M,\delta_0))).
	\end{aligned}
	\end{displaymath}
	The last one is easy to see to be at least $CM^{\frac{h}{2}}$, where $C$ depends on $M_0$ and $o$. Hence the proof is complete. 
\end{proof}
\begin{proof}[Proof of Theorem \ref{Harish-Chandra}]
	Let $\gamma \in E_n$ and take $M = e^{-2n}$ in Corollary \ref{corollary:upperbound} and $\bar{M} = F\ln n e^{-2n}$ in Corollary \ref{corollary:lowerbound} where $F$ is the constant in Lemma \ref{lemma:inverseminsky}. Then Theorem \ref{theorem:regularity} and Proposition \ref{proposition:pantscurveapp} imply the assumption in  Lemma \ref{lemma:log} holds and hence $\Psi(\gamma)_{\geq M } \sim_{g,o,\epsilon} n$ and $\Psi(\gamma)_{\geq \bar{M} } \sim_{g,o,\epsilon} a_1n-c_1\ln\ln n$. Then by Corollary \ref{corollary:upperbound}, Corollary \ref{corollary:lowerbound} and Lemma \ref{lemma:approximation}, the proof is complete.
\end{proof}
In fact, we can prove the following theorem which will be used in showing uniform boundedness. Let $\zeta \in U(\theta,\epsilon)$. Define, for $N$ sufficiently small, $$\Psi(\zeta)_{ \geq N} =\int_{\{\eta \in \PMF(S): i(\eta,\zeta) > N\}}\left(\frac{1}{i(\eta, \zeta)}\right)^{\frac{h}{2}}d\nu(\eta).$$
\begin{theorem}\label{theorem:psiestimation}
    There exist $N_0$, $A> 0$ and $B> 0$, depending only on $g,\theta,o$, such that $$\Psi(\zeta)_{ \geq N} \leq -A\ln N +B.$$
\end{theorem}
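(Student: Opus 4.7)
The plan is to run only the upper-bound half of the argument in Lemma \ref{lemma:log}, now applied to an arbitrary $\zeta \in U(\theta,\epsilon)$ rather than to a specific $\xi_\gamma$. The key observation is that every step needed for the upper bound goes through once we know a uniform upper estimate of the form $\nu(\{\eta:i(\eta,\zeta)\le M\})\le a' M^{h/2}$ for small $M$.

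First I would feed $\zeta$ into Proposition \ref{proposition:pantscurveapp} to obtain a sequence $\{\xi_k(\zeta)\}$ of thick pants curves converging to $\zeta$ in $\PMF(S)$, with extremal length control of the type $(\star_2)$. Each $\xi_k(\zeta)$ satisfies the hypotheses of Theorem \ref{theorem:regularity}, so we obtain, uniformly in $k$, a constant $D=D(g,o,\epsilon)$ and a threshold $M_0>0$ such that
\[
\forall M\le M_0,\qquad \nu\bigl(\{\eta\in\PMF(S):i(\eta,\xi_k(\zeta))\le M\}\bigr)\le D\,M^{h/2}.
\]
Since $\xi_k(\zeta)\to\zeta$ and $\zeta\in U(\theta,\epsilon)\subset\mathcal{UF}$, Lemma \ref{lemma:approximation} then upgrades this to the uniform bound
\[
\forall M\le M_0,\qquad \nu\bigl(\{\eta\in\PMF(S):i(\eta,\zeta)\le M\}\bigr)\le a'\,M^{h/2},
\]
for some $a'=a'(g,o,\epsilon)$.

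Next I would bound $\Psi(\zeta)_{\ge N}$ directly by the layer-cake formula. For $t>0$ the level set $\{\eta:\mathds{1}_{i(\eta,\zeta)>N}\,i(\eta,\zeta)^{-h/2}>t\}$ equals $\{N<i(\eta,\zeta)<t^{-2/h}\}$, which is empty once $t\ge N^{-h/2}$. Hence
\[
\Psi(\zeta)_{\ge N}=\int_{0}^{N^{-h/2}}\nu\bigl(\{N<i(\eta,\zeta)<t^{-2/h}\}\bigr)\,dt.
\]
Split this as $\int_0^{M_0^{-h/2}}+\int_{M_0^{-h/2}}^{N^{-h/2}}$. On the first interval the integrand is bounded by $1$ (since $\nu$ is a probability measure), contributing a constant independent of $N$. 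On the second interval $t^{-2/h}\le M_0$, so the uniform bound above gives
\[
\nu\bigl(\{N<i(\eta,\zeta)<t^{-2/h}\}\bigr)\le a'\,(t^{-2/h})^{h/2}=a'\,t^{-1},
\]
and integration yields $a'\bigl(\ln N^{-h/2}-\ln M_0^{-h/2}\bigr)=-\tfrac{a'h}{2}\ln N+\text{const}$. Combining the two pieces produces constants $A,B>0$ and a threshold $N_0>0$ (depending only on $g,\theta,o$, since $\epsilon$ was fixed once and for all in terms of $\theta$) such that $\Psi(\zeta)_{\ge N}\le -A\ln N+B$ for all $N\le N_0$.

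The only step that requires any care is the uniformity: Theorem \ref{theorem:regularity} gives bounds whose constants depend on the data of the thick pants curve, so one must verify that the constants coming from the sequence $\{\xi_k(\zeta)\}$ produced by Proposition \ref{proposition:pantscurveapp} are controlled purely in terms of $g,o,\epsilon$ (and hence of $g,\theta,o$), independently of $\zeta$ and of $k$. This is exactly the content of $(\star_2)$ combined with the fact that the constants in Theorem \ref{theorem:regularity} were already stated to depend only on $g,o,\epsilon$. Once this uniformity is in place, the invocation of Lemma \ref{lemma:approximation} and the layer-cake calculation are routine and yield the claimed estimate.
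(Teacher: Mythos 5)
Your proof is correct and follows essentially the same route as the paper: the paper's proof simply says to combine Proposition \ref{proposition:pantscurveapp}, Theorem \ref{theorem:regularity}, Lemma \ref{lemma:approximation} and the layer-cake computation from the end of Lemma \ref{lemma:log}, which is exactly what you do (and you carry out the layer-cake step more carefully than the paper does, correctly retaining the $\mathds{1}_{i>N}$ constraint in the level sets). Your remark about uniformity of the constants over $\zeta$ and $k$ is the right thing to check and is indeed supplied by $(\star_2)$ together with the fact that the constants in Theorem \ref{theorem:regularity} depend only on $g,o,\epsilon$.
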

\begin{proof}
The proof is clear by combining the last part of the proof of Lemma \ref{lemma:log}, as well as Lemma \ref{lemma:approximation}, Proposition \ref{proposition:pantscurveapp} and Theorem \ref{theorem:regularity}.
\end{proof}

\section{Ergodicity of boundary representation}
\subsection{Main theorem.}
 In this section, we will prove Theorem \ref{theoreom:ergodicitymcg}, namely,
\begin{theorem}\label{theorem:Ergdicitymainproof}
	Let $S = S_g (g \geq 2)$ and $\pi_{\nu}$ be the associated quasi-regular representation of the mapping class group $\M(S)$ on $L^2(\PMF(S),\nu)$. Let $n \gg \rho$ and $E_n = \mathcal{E}(\theta,\epsilon, n, o, \rho)$ (up to a subsequence). Let $e_n = Pr: E_n \longrightarrow \PMF(S)$ be the radial projection which assigns $g \in E_n$ to the direction $\xi_g$ of the oriented geodesic $[o, g \cdot o]$. Then the quasi-regular representation $\pi_{\nu}$ is ergodic with respect to $(E_n, e_n)$ and any $f \in \bar{H}^{L^{\infty}(\PMF(S),\nu)}$, where $$H = <\mathds{1}_{U}:\nu(\partial U) = 0 ~\mbox{and}~ U ~\mbox{is a Borel subset of}~ \PMF(S)>.$$
\end{theorem}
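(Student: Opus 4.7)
The plan is to verify the four hypotheses of Theorem \ref{criterion2} for the pair $(E_n, Pr)$ with $E_n = \mathcal{E}(\theta,\epsilon,n,o,\rho)$, and then the theorem will follow directly. Conditions (1) and (2) are already in hand: (1) is Corollary \ref{corollary:exponentialgrowth}, which gives the much stronger fact $|E_n| \asymp Ke^{hn}$ along a subsequence, and (2) is exactly Corollary \ref{weakconvergence}, obtained from the ABEM counting (Theorem \ref{ABEM2}) by comparison with $C(n,\rho)$. So only conditions (3) and (4) require real work.

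For condition (3), I would combine the Harish-Chandra lower bound with the regularity of $\Psi$. Using Minsky's inequality (Lemma \ref{Minskyinequailty}) together with $\E_{\gamma\cdot o}(\xi_\gamma) \asymp e^{-2n}$ (via Kerckhoff's formula and Lemma \ref{busemann}), we get a pointwise domination
\[
\left(\frac{\E_o(\xi)}{\E_{\gamma\cdot o}(\xi)}\right)^{h/4} \prec_{g,\rho} \; e^{-hn/2}\left(\frac{1}{i(\xi_\gamma,\xi)}\right)^{h/2}
\]
on $\PMF(S)$. Integrating over $\{x : i(x,\xi_\gamma) \geq r_n\}$ bounds the numerator in (3) by $C e^{-hn/2}\,\Psi(\xi_\gamma)_{\geq r_n}$, and Theorem \ref{theorem:psiestimation} gives $\Psi(\xi_\gamma)_{\geq r_n} \leq -A\ln r_n + B$. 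Dividing by the lower bound $\Phi(\gamma) \succ_{g,o,\epsilon,\theta,\rho}(a_1 n - c_1\ln\ln n + b_1)e^{-hn/2}$ from Theorem \ref{Harish-Chandra} yields
\[
\frac{\langle \pi_{\nu_o}(\gamma)\mathds{1}_{\PMF(S)},\mathds{1}_{\{i(x,\xi_\gamma)\geq r_n\}}\rangle}{\Phi(\gamma)} \;\leq\; h_{r_n}(n,\rho) \;:=\; \frac{-A\ln r_n + B'}{a_1 n - c_1\ln\ln n + b_1}.
\]
Now choose $r_n \to 0$ with $|\ln r_n| = o(n)$ (say $r_n = 1/\sqrt{n}$); then $h_{r_n}(n,\rho)\to 0$ and (3) holds uniformly over $E_n$.

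The main obstacle is condition (4), the uniform $L^\infty$-boundedness of $M^{\mathds{1}}_{E_n}\mathds{1}_{\PMF(S)}$. The plan here, following the strategy outlined in the introduction, is to discretize using the shadow lemma rather than a direct metric argument on $\PMF(S)$. Concretely, for $x \in \PMF(S)$ I would write
\[
M^{\mathds{1}}_{E_n}\mathds{1}_{\PMF(S)}(x) \;=\; \frac{1}{|E_n|}\sum_{\gamma \in E_n} \frac{1}{\Phi(\gamma)}\left(\frac{\E_o(\xi(x))}{\E_{\gamma\cdot o}(\xi(x))}\right)^{h/4}
\]
and use Minsky's inequality again to replace the ratio of extremal lengths by a power of $1/i(\xi_\gamma,\xi(x))$ times $e^{-hn/2}$. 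After substituting the Harish-Chandra lower bound for $\Phi(\gamma)$ and the exponential growth $|E_n| \asymp Ke^{hn}$, the task reduces to controlling, uniformly in $x$, a sum of the form $\frac{1}{n}\sum_{\gamma \in E_n} i(\xi_\gamma,\xi(x))^{-h/2}$. To bound this I would decompose $E_n$ according to a dyadic partition based on the value of $i(\xi_\gamma,\xi(x))$ (or equivalently, by the Teichmüller distance from $\gamma\cdot o$ to the geodesic ray $[o,\xi(x))$) and apply the Shadow Lemma \ref{lemma:Shodowlemma} together with the ABEM counting to estimate the number of $\gamma \in E_n$ whose endpoint lies in each dyadic shell. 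The statistical hyperbolicity input ensured by the definition of $\mathcal{E}(\theta,\epsilon,n,o,\rho)$ (in particular Lemma \ref{lemma:longsegment} and Theorem \ref{theorem:DDM}) is what allows Minsky's inequality to be reversed on most of $E_n$ and keeps the dyadic sum geometric. The delicate point, and the genuine technical heart of the whole argument, is that the logarithmic factor $n$ appearing in the denominator must exactly cancel the number of relevant dyadic shells, which is what makes the bound uniform in both $n$ and $x$.

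Once the four conditions are in place, Theorem \ref{criterion2} applies and yields ergodicity of $\pi_{\nu_o}$ with respect to $(E_n, Pr)$ and any $f \in \bar H^{L^\infty(\PMF(S),\nu)}$, which is exactly the conclusion of Theorem \ref{theorem:Ergdicitymainproof}. Corollary \ref{corollary:mappingclassgroupirreducibility} and the ergodicity of the action then follow, by Proposition \ref{Irre} and Remark \ref{remark:quasiregularirreducible}, from the fact that $\nu_o$ is Radon.
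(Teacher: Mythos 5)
Your overall plan — verify the four hypotheses of Theorem \ref{criterion2} for $(E_n, Pr)$ with $E_n = \mathcal{E}(\theta,\epsilon,n,o,\rho)$ and then invoke that theorem — is exactly what the paper does. Conditions (1) and (2) you cite correctly (Corollary \ref{corollary:exponentialgrowth}, Corollary \ref{weakconvergence}). Condition (3) you reconstruct in essentially the way the paper does (Proposition \ref{nontangentconvergence}): Minsky plus $\E_{\gamma\cdot o}(\xi_\gamma)\asymp e^{-2n}$ gives the pointwise domination, then $\Psi$-regularity (Theorem \ref{theorem:psiestimation}) and the Harish-Chandra lower bound give a ratio $\sim \frac{\ln(1/r_n)}{n}$; the paper takes $r_n = 1/n$, you take $r_n=1/\sqrt n$, and either works.

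The gap is in condition (4). Your reduction to controlling $\frac{1}{n}\sum_{\gamma \in E_n} i(\xi_\gamma,\xi(x))^{-h/2}$ (which should in any case carry an extra $|E_n|^{-1}\asymp e^{-hn}$ factor from the prefactor $1/|E_n|$) silently applies Minsky's inequality $\left(\frac{1}{\E_{\gamma\cdot o}(\zeta)}\right)^{h/4}\leq e^{-hn/2}\, i(\xi_\gamma,\zeta)^{-h/2}$ to every $\gamma \in E_n$. This bound is vacuous for the $\gamma$ with $i(\xi_\gamma,\zeta)\lesssim e^{-2n}$, where it exceeds the trivial Kerckhoff bound $e^{hL(\gamma)/2}$ and the shadow-lemma/ABEM heuristic (measure of a shell times $e^{hn}$ equals expected lattice count) also breaks down because one is below the natural counting scale $e^{-hn}$. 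Those "close" terms are precisely where the argument lives or dies: each such $\gamma$ can contribute on the order of $1/n$ to $M^{\mathds{1}}_{E_n}\mathds{1}(\zeta)$ after normalizing, so one must show there are only $O(n)$ of them. The paper does this in Theorem \ref{uniformbounded} by splitting $E_n$ into $F^1_n$ (shadows disjoint from $\mathcal{IN}(\zeta,n,C)$, handled by averaging over shadows, bounded overlap, Shadow Lemma and Theorem \ref{theorem:psiestimation}) and $H_n = E^2_n\cup F^2_n$ (shadows meeting $\mathcal{IN}(\zeta,n,C)$), and then proving the claim $|H_n|\prec n$ via Lemma \ref{lemma:twoneigborhood} and Corollary \ref{corollary:boundednumberpoints}: thin-triangle geometry (Theorem \ref{theorem:DDM}), the thick terminal segment built into the definition of $E_n$ (Lemma \ref{lemma:longsegment}), and fellow-travelling (Theorem \ref{theorem:fellowtravelling}) force any such $\gamma\cdot o$ into a ball of radius $\sim\frac{1}{h}\ln\ln n$ around the time-$n$ point of $[o,\zeta)$, and the ABEM lattice-point count in that ball is $\prec n$. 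You gesture at this ("counting lattice points", "reverse Minsky on most of $E_n$"), but without the decomposition, the trivial bound for the close $\gamma$, and the $O(n)$ lattice count, the uniform bound does not follow; as written, your dyadic sum could diverge on the innermost shells. Also, the cancellation is not "the $n$ in the denominator against the number of dyadic shells" but rather $\Phi(\gamma)\sim n e^{-hn/2}$ against $\Psi(\zeta)_{\geq Ce^{-2n}}\sim n$ for the far part, and $\Phi(\gamma)\sim n e^{-hn/2}$ against $|H_n|\prec n$ for the close part.
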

\begin{proof}
	The proof consists of verifying all assumptions in Theorem \ref{criterion2} for $E_n$. The first two will be verified by showing $E_n$ is of exponential growth (namely, Corollary \ref{corollary:exponentialgrowth} and Corollary \ref{weakconvergence}). The third one is by Proposition \ref{nontangentconvergence}. The last one is Theorem \ref{uniformbounded}.
\end{proof}
\begin{proposition}\label{nontangentconvergence}
	For every $n \gg \rho$, there are two sequences of real numbers $\{h_{r_n}(n, \rho)\}$ and $\{r_n\}$ such that $\lim_{n \rightarrow \infty}h_{r_n}(n,\rho) = \lim_{n\rightarrow \infty}r_n = 0$ and such that $$\forall n \in \mathbb{N}, \forall \gamma \in E_n, \frac{\langle \pi_{\nu}(\gamma)\mathds{1}_{\PMF(S)}, \mathds{1}_{\{x\in \PMF(S): i(x,e_n(\gamma))\geq r_n\}}\rangle}{\Phi(\gamma)} \leq h_{r_n}(n,\rho).$$
\end{proposition}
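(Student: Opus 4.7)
The plan is to reduce the matrix coefficient in the numerator to an integral against intersection numbers using Minsky's inequality, apply the $\Psi$-estimate (Theorem \ref{theorem:psiestimation}), and then divide by the Harish-Chandra lower bound from Theorem \ref{Harish-Chandra}.

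First, using that $\pi_\nu(\gamma)\mathds{1}_{\PMF(S)}(\eta) = \left(\E_o(\eta)/\E_{\gamma\cdot o}(\eta)\right)^{h/4}$ together with the convention $\E_o(\tau([\eta])) = 1$ (so $\E_o(\eta) = 1$ under the embedding $\tau$), I would rewrite the numerator as
\begin{equation*}
\int_{\{\eta \in \PMF(S): i(\eta, \xi_\gamma) \geq r_n\}} \left(\frac{1}{\E_{\gamma \cdot o}(\eta)}\right)^{h/4} d\nu(\eta).
\end{equation*}
Since $\E_{\gamma \cdot o}(\xi_\gamma) = e^{-2n}$ by Lemma \ref{busemann} (noting $\xi_\gamma \in U(\theta,\epsilon)$ is uniquely ergodic as $\gamma \in E_n$), Minsky's inequality (Lemma \ref{Minskyinequailty}) gives the pointwise bound
\begin{equation*}
\frac{1}{\E_{\gamma \cdot o}(\eta)} \leq \frac{e^{-2n}}{i^2(\xi_\gamma, \eta)}.
\end{equation*}
Substituting and pulling out $e^{-hn/2}$, the numerator is bounded above by $e^{-hn/2}\, \Psi(\gamma)_{\geq r_n}$.

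Next, I would invoke Theorem \ref{theorem:psiestimation}: provided $r_n$ is eventually at most $N_0$, one has constants $A, B > 0$ depending only on $g, \theta, o$ such that
\begin{equation*}
\Psi(\gamma)_{\geq r_n} \leq -A \ln r_n + B.
\end{equation*}
(This applies to $\xi_\gamma$ because $\xi_\gamma \in U(\theta,\epsilon)$.) Combining with the Harish-Chandra lower bound of Theorem \ref{Harish-Chandra}, namely $\Phi(\gamma) \geq (a_1 n - c_1 \ln\ln n + b_1)e^{-hn/2}$, I obtain
\begin{equation*}
\frac{\langle \pi_\nu(\gamma)\mathds{1}_{\PMF(S)}, \mathds{1}_{\{i(\cdot,\xi_\gamma) \geq r_n\}}\rangle}{\Phi(\gamma)} \leq \frac{-A \ln r_n + B}{a_1 n - c_1 \ln\ln n + b_1}.
\end{equation*}

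Finally, I would choose $r_n$ tending to zero slowly enough that $-\ln r_n / n \to 0$; for example, $r_n = 1/\sqrt{n}$ works and satisfies $r_n \leq N_0$ for $n$ large. Setting $h_{r_n}(n,\rho)$ equal to the right-hand side above, both $r_n \to 0$ and $h_{r_n}(n,\rho) \to 0$, proving the proposition. The main obstacle is a bookkeeping one: ensuring the chosen $r_n$ lies in the admissible range of Theorem \ref{theorem:psiestimation} uniformly for all $\gamma \in E_n$ (which is automatic since the constants $N_0, A, B$ there depend only on $g, \theta, o$), and that the denominator from Theorem \ref{Harish-Chandra} is indeed positive for $n$ large. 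No genuinely new ingredients are required beyond Minsky's inequality, the Harish-Chandra estimates, and the logarithmic growth of $\Psi$.
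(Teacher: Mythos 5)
Your proposal is correct and follows essentially the same route as the paper's proof. The paper's own argument sets $r_n = 1/n$ and cites Corollary \ref{corollary:upperbound} (whose proof contains exactly the Minsky's-inequality step you write out), Lemma \ref{lemma:log}, and Theorem \ref{theorem:regularity}; you instead invoke Theorem \ref{theorem:psiestimation}, which packages those same ingredients, and pick $r_n = 1/\sqrt{n}$ — a cosmetic difference, since any $r_n \to 0$ with $-\ln r_n = o(n)$ works against the Harish-Chandra lower bound $(a_1 n - c_1 \ln\ln n + b_1)e^{-hn/2}$.
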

\begin{proof}
Now,Let $n \gg \rho$ and $\gamma \in E_n$. Let $\xi_{\gamma}$ as before. Take $r_n = \frac{1}{n}$, by the Harish-Chandra estimates (Theorem \ref{Harish-Chandra}), Corollary \ref{corollary:upperbound}, Lemma \ref{lemma:log} and the proof of its assumption (Theorem \ref{theorem:regularity}), $$\frac{\langle \pi_{\nu}(\gamma)\mathds{1}_{\PMF(S)}, \mathds{1}_{\{x\in \PMF(S): i(x,\xi_{\gamma})\geq \frac{1}{n}\}}\rangle}{\Phi(\gamma)} \leq c(g,o,\rho)\frac{\ln n-D}{a_1n-c_1\ln\ln n +b_1}.$$ Take $h(n,\rho) = c(g,o,\rho)\frac{\ln n - D}{a_1n-c_1\ln \ln n +b_1}$, we complete the proof.
\end{proof}

\subsection{Uniform boundedness.}\label{subsection:uniformboundedness}
In this section, we complete our proof of the main theorem by proving the uniform boundedness. We start with some lemmas comparing two types of neighborhoods.
\begin{lemma}\label{lemma:exponentialapproximate}
	Using notations as Lemma \ref{lemma:boundeddistance}. Let $\xi_{\gamma} \in \PMF(S)$ be the direction of $[o,y=\gamma \cdot o]$ and $\xi^{\gamma} \in \PMF(S)$ be the direction of $[o,x_{\gamma}=t_y]$. Then there exists a constant $C$ such that $i(\xi_{\gamma}, \xi^{\gamma}) \leq C e^{-2d(\gamma \cdot o,o)}$.
\end{lemma}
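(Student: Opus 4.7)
The plan is to apply Minsky's inequality (Lemma \ref{Minskyinequailty}) at the point $y = \gamma\cdot o$, and then to control the two extremal lengths that appear on the right-hand side using Kerckhoff's formula together with Lemma \ref{lemma:boundeddistance}. Concretely, Minsky's inequality gives
\[
i(\xi_\gamma,\xi^\gamma)^2 \;\le\; \E_y(\xi_\gamma)\,\E_y(\xi^\gamma),
\]
so the task reduces to bounding each factor by (a constant multiple of) $e^{-2d(o,y)}$.

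For the first factor, I would use the identity $\E_z(\xi)=\E_o(\xi)\,e^{2b_\xi(z)}$ (Lemma \ref{busemann}) applied along the geodesic ray in direction $\xi_\gamma$, together with the normalization $\E_o(\xi_\gamma)=1$ coming from the section $\tau$. Since $y$ lies on this ray at distance $d(o,y)$, this gives exactly $\E_y(\xi_\gamma)=e^{-2d(o,y)}$. For the second factor, the same identity applied along the ray to $x_\gamma$ yields $\E_{x_\gamma}(\xi^\gamma)=e^{-2d(o,x_\gamma)}$; then Kerckhoff's formula (Lemma \ref{lemma:Kerckhoffformula}) transfers this estimate from $x_\gamma$ to $y$ at the cost of a factor $e^{2d(y,x_\gamma)}$. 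Here is where Lemma \ref{lemma:boundeddistance} enters crucially: it tells us $d(y,x_\gamma)\le C_2$, and consequently also $d(o,x_\gamma)\ge d(o,y)-C_2$ by the triangle inequality. Combining these yields
\[
\E_y(\xi^\gamma) \;\le\; e^{2C_2}\E_{x_\gamma}(\xi^\gamma) \;=\; e^{2C_2}e^{-2d(o,x_\gamma)} \;\le\; e^{4C_2}e^{-2d(o,y)}.
\]

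Plugging both bounds into Minsky's inequality gives $i(\xi_\gamma,\xi^\gamma)^2 \le e^{4C_2}\,e^{-4d(o,y)}$, and taking square roots produces the desired inequality with constant $C=e^{2C_2}$.

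There is no real obstacle here: Lemma \ref{lemma:boundeddistance} has already encapsulated all the hard work (the use of Minsky's product region theorem via Theorem \ref{Minskyextremal}), and the remaining argument is just a bookkeeping exercise combining Minsky's inequality, Kerckhoff's formula, and the behaviour of extremal length along Teichm\"uller geodesics. The only point requiring mild care is ensuring that the normalizations of $\xi_\gamma$ and $\xi^\gamma$ as elements of $\MF(S)$ are the ones coming from the section $\tau$ (i.e. $\E_o(\cdot)=1$), so that the exponential decay of extremal length along the geodesic starts from $1$ at $o$.
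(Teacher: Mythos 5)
Your proof is correct and follows essentially the same route as the paper: Minsky's inequality reduces the estimate to two extremal-length factors, each controlled using Lemma \ref{lemma:boundeddistance} together with Kerckhoff's formula (the paper applies Minsky's inequality at $x_\gamma$ and reads $\E_{x_\gamma}(\xi^\gamma)$ off the pants-curve form $\xi^\gamma = \beta_y/\sqrt{\E_o(\beta_y)}$, whereas you apply it at $y$, a purely cosmetic difference). One small caveat: the identity $\E_{g_t}(\xi)=\E_o(\xi)\,e^{-2t}$ along a Teichm\"uller ray is a general property of quadratic differentials rather than a consequence of Lemma \ref{busemann}, which is stated only for uniquely ergodic $\xi$ and therefore does not literally cover the multicurve $\xi^\gamma$; citing the general fact directly is cleaner.
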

\begin{proof}
	By Lemma \ref{Minskyinequailty}, $$ i^2(\xi_{\gamma}, \xi^{\gamma}) \leq \E_{x_{\gamma}}(\xi_{\gamma})\E_{x_{\gamma}}(\xi^{\gamma}).$$
	Let $\alpha = \beta_y$ as before. Then $\xi^{\gamma} = \frac{\alpha}{\sqrt{\E_{o}(\alpha)}} \sim_{g,o} e^{-d(\gamma \cdot o,o)}\alpha$. As $\E_{x_{\gamma}}(\alpha) \sim_{g,o} 1$, we have  $\E_{x_{\gamma}}(\xi^{\gamma}) = \frac{1}{\E_{o}(\alpha)}\E_{x_{\gamma}}(\alpha) \prec_{g,o} e^{-2d(\gamma \cdot o,o)}$. On the other hand, by Lemma \ref{lemma:boundeddistance}, up to a multiplicative constant, one could replace $\E_{x_{\gamma}}(\xi_{\gamma})$ by $\E_{\gamma \cdot o}(\xi_{\gamma}) = e^{-2d(\gamma \cdot o,o)}$. Collect all discussions together, one can complete the proof.
\end{proof}

Let $\xi \in \PMF(S)$ and $x \in [o,\xi]$, denote $\mathcal{I}_C(\xi,x) = \{\eta \in \PMF(S): i(\eta,\xi) \leq C e^{-2d(x,o)} \}$. Let $L, \theta, \epsilon$ as in Theorem \ref{theorem:DDM}. 

\begin{lemma}\label{lemma:twoneigborhood}
	Let $\eta \in \PMF(S)$ and $\gamma \in E_n$. Suppose that $\eta$ does not leave $\T_{\epsilon}(S)$ eventually. Let $x \in [o,\eta]$ such that $d(x,o) = n$. Let $C \geq 1$ and $n$ large enough. Then if $\gamma \in E_n$ such that $i(\xi_{\gamma}, \eta) \leq Ce^{-2n}$, then $d(x, \gamma \cdot o) \leq \frac{1}{h}\ln\ln n$.   
\end{lemma}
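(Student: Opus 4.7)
The plan is to apply the thin-triangle theorem, Theorem~\ref{theorem:DDM}, to the geodesic triangle $\triangle(o, \gamma\cdot o, \eta)$. By Masur's criterion (Lemma~\ref{lemma:masurcriterion}) together with the hypothesis, $\eta$ is uniquely ergodic, so the geodesic $[\gamma\cdot o,\eta]$ exists and the triangle makes sense; as in the proof of Lemma~\ref{lemma:inverseminsky}, Theorem~\ref{theorem:DDM} may still be applied when one vertex lies on the boundary. Since $\gamma \in E_n$, Lemma~\ref{lemma:longsegment} furnishes a segment $I_\gamma \subset [o,\gamma\cdot o]$ of length $\ell = \tfrac{1}{3h}\ln\ln n$ starting at $\gamma\cdot o$ with at least proportion $\theta$ in $\T_\epsilon(S)$; the constraint $\ell > L_0$ built into the definition of $\mathcal{E}(\theta,\epsilon,n,o,\rho)$ is exactly what lets Theorem~\ref{theorem:DDM} yield a point $q \in I_\gamma$ within some $D = D(\theta,\epsilon)$ of $[o,\eta] \cup [\gamma\cdot o,\eta]$.

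In the first case $q$ is $D$-close to some $y \in [o,\eta]$. Since $q \in I_\gamma$, $d(q,o) \in [n-\rho-\ell, n+\rho]$, so $|d(y,o) - n| \leq \rho + \ell + D$; as $x$ and $y$ both lie on $[o,\eta]$ with $d(x,o)=n$, this forces $d(x,y) \leq \rho+\ell+D$. The triangle inequality then yields
\[
d(x,\gamma\cdot o) \;\leq\; d(x,y) + d(y,q) + d(q,\gamma\cdot o) \;\leq\; 2\ell + \rho + 2D,
\]
which is at most $\tfrac{1}{h}\ln\ln n$ for $n$ large, since $2\ell = \tfrac{2}{3h}\ln\ln n$ and the remaining constants are fixed.

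In the second case $q$ is $D$-close to some $y \in [\gamma\cdot o,\eta]$. Here I would mimic Case~1 of the proof of Lemma~\ref{lemma:inverseminsky}: Minsky's equality on the geodesic $[\xi_\gamma,\eta]$ evaluated at $y$, combined with Kerckhoff's formula and the bounds $d(y,q) \leq D$, $d(q,\gamma\cdot o)\leq \ell$, yields
\[
\E_{\gamma\cdot o}(\eta) \;\prec_{g,o,\theta,\epsilon,\rho}\; i(\xi_\gamma,\eta)^2\, e^{2d(o,\gamma\cdot o)} \;\prec\; C^2 e^{-2n}.
\]
By Lemma~\ref{busemann}, $b_\eta(\gamma\cdot o) = \tfrac{1}{2}\ln \E_{\gamma\cdot o}(\eta)$ then lies within an additive $O(1)$ of $b_\eta(x) = -n$. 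To upgrade this Busemann comparison into a Teichm\"uller distance bound, I would use that $\eta$ stays in $\T_\epsilon(S)$ eventually, so both rays $[o,\eta]$ and $[\gamma\cdot o,\eta]$ have thick tails; applying Theorem~\ref{theorem:fellowtravelling} to those tails shows that the points of the two rays at the common Busemann level $-n$ are $O(1)$-close, whence $d(x,\gamma\cdot o) = O(1) \leq \tfrac{1}{h}\ln\ln n$.

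The hardest step is this second case: the inverse-Minsky argument only places $\gamma\cdot o$ on a horosphere around $\eta$ at level comparable to that of $x$, and turning such horospherical comparisons into genuine Teichm\"uller distance bounds requires the asymptotic fellow-traveling, along their thick portions, of geodesic rays ending at a uniquely ergodic foliation; this is exactly where the hypothesis that $\eta$ does not leave $\T_\epsilon(S)$ eventually is used in an essential way.
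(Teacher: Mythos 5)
Your Case 1 reproduces the paper's argument correctly. However, your choice of triangle and your Case 2 both have genuine gaps.

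The paper applies Theorem~\ref{theorem:DDM} to the ideal triangle $\triangle(o,\xi_\gamma,\eta)$, whose third side is the bi-infinite geodesic $[\xi_\gamma,\eta]$. You instead take $\triangle(o,\gamma\cdot o,\eta)$. The segment $I_\gamma$ from Lemma~\ref{lemma:longsegment} \emph{ends at} $\gamma\cdot o$, which is a vertex of your triangle; therefore $\gamma\cdot o\in I_\gamma\cap\mathcal{N}_D([\gamma\cdot o,\eta])$ trivially and the conclusion of Theorem~\ref{theorem:DDM} is vacuous here. Your later invocation of Minsky's equality along $[\xi_\gamma,\eta]$ (where $y$ must lie for that equality to hold) shows you implicitly revert to the paper's triangle anyway; you need to set it up that way from the start, so that $I_\gamma$ stays away from both other sides and the dichotomy has content.

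Even with the correct triangle, your proposed closure of Case 2 does not hold up. The hypothesis ``$\eta$ does not leave $\T_\epsilon(S)$ eventually'' means the ray $[o,\eta)$ recurs to the thick part infinitely often, not that its tail lies in $\T_\epsilon(S)$, and nothing at all is known about the thickness of $[\gamma\cdot o,\eta)$. More seriously, Theorem~\ref{theorem:fellowtravelling} requires both pairs of endpoints to lie at \emph{bounded} distance in $\T_\epsilon(S)$, whereas here $d(o,\gamma\cdot o)\sim n\to\infty$, so it does not apply; the paper provides no quantitative fellow-travelling statement for two rays converging to a common uniquely ergodic endpoint, and your claimed conclusion $d(x,\gamma\cdot o)=O(1)$ is not established (it is also strictly stronger than what the lemma asserts, which should be a warning sign). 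The step from a Busemann-level comparison to a Teichm\"uller-distance bound is exactly the hard part, and the paper does it by a different device. From the horospherical estimate it produces a thick point $z\in[o,\eta]\cap\T_\epsilon(S)$ with the Gromov-product--type quantity $0\le d(o,\gamma\cdot o)+d(\gamma\cdot o,z)-d(o,z)\le c_1$ uniformly bounded, and then runs a \emph{second} application of Theorem~\ref{theorem:DDM} to the finite triangle $\triangle(o,\gamma\cdot o,z)$, this time using only the first half $[s,m]$ of $I_\gamma$, which is at distance at least $\ell/2$ from the vertex $\gamma\cdot o$ (so the trivial-satisfiability problem is avoided) and is still predominantly thick because $\theta$ is taken close to $1$. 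If a point of $[s,m]$ is $D'$-close to $[o,z]$, one concludes as in Case 1; if it is $D'$-close to $[\gamma\cdot o,z]$, one derives $d(o,\gamma\cdot o)+d(\gamma\cdot o,z)-d(o,z)\ge\ell-2D'\to\infty$, contradicting the bound by $c_1$. This second thin-triangle argument, not ray fellow-travelling, is the missing idea.
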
 
\begin{proof}
	We argue as Lemma \ref{lemma:inverseminsky}. Denote $\xi = \xi_{\gamma}$, hence by assumption $i(\xi,\eta) \leq Ce^{-2n}$. First we remark that, since both $\eta$ and $\xi$ are uniquely ergodic, we have a geodesic triangle $\triangle(o,\xi, \eta)$. As $\gamma \in E_n$, there is also a geodesic segment $I$ of length $\ell = \frac{1}{3h}\ln \ln n$ in $[o,\gamma \cdot o]$ ending at $p = \gamma \cdot o$ that has at least proportion $\theta$ in $\T_{\epsilon}(S)$. By Theorem \ref{theorem:DDM}, $$I \cap \mathcal{N}_{D}([o,\eta]\cap[\xi,\eta]) \ne \emptyset,$$ where $D$ is the constant given in Theorem \ref{theorem:DDM}. Choose $q \in I \cap \mathcal{N}_{D}([o,\eta]\cap[\xi,\eta])$. Then there are two possibilities:\\
	
	\noindent Case 1: $d(q, y) \leq D$ with $y\in [o,\eta]$.\\
	
	Then $$ d(q,o) - D \leq d(o,y) \leq d(q,o) +D.$$ Since $$n-\ell-\rho \leq d(q,o) \leq n+\rho,$$ we have $$0 \leq d(x,y) \leq \ell +D+\rho.$$ Hence,
	\begin{displaymath}
	\begin{aligned}
	&d(x,\gamma \cdot o) \leq d(x,y) + d(y,q) + d(q,p)\\
	&\leq \ell + D + D + \ell+\rho\\
	&\leq 2(\ell +D+\rho)\\
	&\leq 3 \ell. 
	\end{aligned}
	\end{displaymath} 
	
	\noindent Case 2: $d(q,y) \leq D$ with $y \in [\xi,\eta]$.
	
	Then by Lemma \ref{Minskyinequailty}, one has $$i^2(\eta,\xi) = \E_y(\xi)\E_{y}(\eta).$$ Now, since $d(q,y) \leq D$, by Kerckhoff's formula, we have $$e^{-2D}\E_{q}(\xi) \leq \E_{y}(\xi), ~~~e^{-2D}\E_{q}(\eta) \leq \E_{y}(\eta).$$ Therefore, $$ e^{-4D }\E_q(\xi)\E_{q}(\eta) \leq i^2(\xi,\eta).$$ On the other hand, we have $$\E_q(\xi) = e^{-2d(o,q)},~~~ i(\xi,\eta) \leq Ce^{-2n},$$ which implies that $$e^{-4D} \E_{q}(\eta)e^{-2d(o,q)} \leq C^2e^{-4n}.$$ 
	That is,  $$e^{-4D} \E_{q}(\eta)e^{2(n-d(o,q))} \leq C^2e^{-2n}.$$ By Kerckhoff's formula again, $$ \E_p(\eta) \leq C^2e^{2\rho+4D}e^{-2n},$$ i.e. $$ \frac{1}{2}\ln\E_p(\eta) \leq \ln (Ce^{\rho+2D})-n.$$ Apply Lemma \ref{busemann},  one could choose $z \in [o,\eta] \cap \T_{\epsilon}(S)$ so that, if denote $d(o,p) = t$, $d(p,z)=a$ and $d(z,o) = b$, then $a-b \leq -n + \ln (Ce^{2\rho+4D}) +1$. Therefore, we have $$ 0 \leq t + a-b \leq \ln (Ce^{2\rho+4D}) +1+ \rho =c_1.$$ Note that $c_1$ is a constant depending on $C, \epsilon,\theta$. Now consider the geodesic triangle $\Delta(o,p,z)$. Since the side $[o,p]$ has a segment $I =[s,p]$ ending at $p$ which has at least proportion $\theta$ in the thick part, take the midpoint $m$ of $I$, then since $\theta = 0.999$, the subsegment $[s,m]$ has at least $0.1$ in the thick part. By Theorem \ref{theorem:DDM} again, there exist $g \in [s,m]$ and a constant $D'$ when $n>>0$, such that, $$d(g, [o,z] \cup [p,z]) \leq D'.$$ If there is $h_1 \in [o,z]$ such that $d(h_1,g) \leq D'$, one can complete the proof as in the first case. Otherwise, there is a $h_2 \in [p,z]$ such that $d(h_2,g) \leq D'$. Then 
 \begin{equation*}
     \begin{aligned}
t+a-b = d(o,g) + d(g,p) + d(p,h_2) +d(h_2,z) - d(o,z)\\
= d(o,g) + D' + d(h_2,z) - d(o,z)+ d(g,p) + d(p,h_2)  -D'\\
\geq \ell - 2D' \to \infty,
\end{aligned}
 \end{equation*}
which is impossible since $t+a-b$ is bounded by $c_1$.
\end{proof}
\begin{corollary}\label{corollary:boundednumberpoints}
	Let $\eta \in \PMF(S)$ and suppose that $\eta$ does not leave $\T_{\epsilon}$ eventually. Let $x \in [o,\eta]$ such that $d(x,o) = n$. Let further $C > 0$ and $n$ large enough. Then 
	\begin{displaymath}
	\begin{aligned}
	&\left|\{ \gamma \in E_n: \gamma \cdot o \in Sec_{\mathcal{I}_C(\eta, x)}\}\right|\\
	& \prec_{g,o,\rho} n.
	\end{aligned}
	\end{displaymath}	
\end{corollary}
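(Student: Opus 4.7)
The plan is to convert the intersection-number neighborhood $\mathcal{I}_C(\eta,x)$ into a Teichm\"{u}ller-metric ball via Lemma \ref{lemma:twoneigborhood}, and then to apply a lattice-point counting estimate.

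First, by definition of $Sec$, the condition $\gamma\cdot o\in Sec_{\mathcal{I}_C(\eta,x)}$ is equivalent to $\xi_\gamma\in\mathcal{I}_C(\eta,x)$, i.e., to the intersection bound $i(\xi_\gamma,\eta)\le Ce^{-2d(x,o)}=Ce^{-2n}$. Since by hypothesis $\eta$ does not leave $\T_\epsilon(S)$ eventually and $n$ is large, the point $x\in[o,\eta]$ at distance $n$ from $o$ lies in $\T_\epsilon(S)$. The hypotheses of Lemma \ref{lemma:twoneigborhood} are then all satisfied, and we conclude $d(x,\gamma\cdot o)\le\tfrac{1}{h}\ln\ln n$. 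Consequently,
$$\bigl\{\gamma\in E_n:\gamma\cdot o\in Sec_{\mathcal{I}_C(\eta,x)}\bigr\}\ \subset\ \bigl\{\gamma\in\M(S):\gamma\cdot o\in B\bigl(x,\tfrac{1}{h}\ln\ln n\bigr)\bigr\}.$$

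Second, I would bound the right-hand side by a ball-counting estimate. Because the $\epsilon$-thick part $\T_\epsilon(S)$ projects to a precompact subset of moduli space, a uniform estimate of the form
$$\bigl|\{\gamma\in\M(S):d(\gamma\cdot o,y)\le R\}\bigr|\ \le\ C(\epsilon,o)\,e^{hR}\quad\text{for } y\in\T_\epsilon(S)$$
should hold, for instance by a volume/packing argument using that Teichm\"{u}ller balls grow at exponential rate $h$ (cf.\ Theorem \ref{ABEM2} applied with $W=V=\PMF(S)$) together with the fact that $\M(S)$-fundamental domains have uniformly bounded volume on the thick part. Applying this with $y=x$ and $R=\tfrac{1}{h}\ln\ln n$ yields a bound of order $\ln n$, which is $\prec_{g,o,\rho} n$ as claimed.

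The main obstacle is the uniformity of the ball-counting constant as the basepoint varies over $\T_\epsilon(S)$: for points approaching the thin part this constant would blow up and the bound would degrade. The hypothesis that the geodesic $[o,\eta)$ does not eventually leave $\T_\epsilon(S)$ is precisely what guarantees $x\in\T_\epsilon(S)$ for large $n$ and keeps us in the uniform regime. Once this uniform ball count is in hand, the rest of the argument is an immediate application of Lemma \ref{lemma:twoneigborhood}.
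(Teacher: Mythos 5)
Your proof is correct and follows essentially the same route as the paper: reduce via Lemma \ref{lemma:twoneigborhood} to $\gamma\cdot o\in B(x,\tfrac{1}{h}\ln\ln n)$, then apply lattice-point counting in Teichm\"{u}ller balls (the paper cites Theorem 1.2 of \cite{ABEM} directly, giving $|B(x,R)\cap\M(S)\cdot o|\le N_0e^{hR}$, hence a bound of order $\ln n\prec n$). Your remark about uniformity of the counting constant over the thick part is the right thing to flag, and is exactly what the thick-part hypothesis on $\eta$ (hence on $x$) secures; the paper treats this more tersely via the citation.
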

\begin{proof}
	By Theorem 1.2 in \cite{ABEM}, when $n$ is large enough, there exists a constant $N_0 > 0$, such that $|B(x, R) \cap \M(S)\cdot o| \leq N_0 e^{hR}$. Apply Lemma \ref{lemma:twoneigborhood}, we have the conclusion. 
\end{proof}

\begin{theorem}\label{uniformbounded}
	Under the notations used in Theorem \ref{theorem:Ergdicitymainproof}, we have
	$$\sup_{n}\left\|M^{\mathds{1}_{\PMF(S)}}_{E_n}\mathds{1}_{\PMF(S)}\right\|_{L^{\infty}(\PMF(S),\nu)} < \infty.$$
\end{theorem}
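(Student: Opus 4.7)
The plan is to bound $M^{\mathds{1}_{\PMF(S)}}_{E_n}\mathds{1}_{\PMF(S)}(\eta)$ uniformly in $n$ for $\nu$-a.e.\ $\eta \in \PMF(S)$, converting each summand into intersection-number data via Minsky's inequality combined with the lower Harish-Chandra bound, and then exploiting the fact that $\{\xi_\gamma : \gamma \in E_n\}$ behaves like an equidistributed sample of $\nu$. Since the essential supremum may be taken over a full-measure subset, I restrict to $\eta \in U(\theta,\epsilon)$ which is uniquely ergodic and does not eventually leave $\T_\epsilon(S)$; for such $\eta$, combining Theorem \ref{theorem:regularity}, Proposition \ref{proposition:pantscurveapp} and Lemma \ref{lemma:approximation} yields the regularity estimate $\nu(\{x \in \PMF(S): i(x,\eta) \leq r\}) \leq D r^{h/2}$ for $r$ sufficiently small.

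For each $\gamma \in E_n$, Minsky's inequality (Lemma \ref{Minskyinequailty}) together with $\E_{\gamma\cdot o}(\xi_\gamma) = e^{-2d(o,\gamma\cdot o)}$ and the lower bound $\Phi(\gamma) \succ n e^{-hn/2}$ of Theorem \ref{Harish-Chandra} produce the pointwise estimate
\[
\frac{\pi_{\nu}(\gamma)\mathds{1}_{\PMF(S)}(\eta)}{\Phi(\gamma)} \;\leq\; \frac{C_1}{n\, i(\xi_\gamma,\eta)^{h/2}},
\]
while Kerckhoff's formula (Lemma \ref{lemma:Kerckhoffformula}) furnishes the complementary trivial bound $\pi_{\nu}(\gamma)\mathds{1}(\eta)/\Phi(\gamma) \leq C_2 e^{hn}/n$. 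I then split $E_n = E_n^{\mathrm{close}} \sqcup E_n^{\mathrm{far}}$ at the scale $r_n := C_0 e^{-2n}$, with $C_0$ the constant of Corollary \ref{corollary:boundednumberpoints}. For the close part, Corollary \ref{corollary:boundednumberpoints} gives $|E_n^{\mathrm{close}}| \leq C' n$; using the trivial bound together with $|E_n| \asymp K e^{hn}$ (Corollary \ref{corollary:exponentialgrowth}), the close contribution to $M\mathds{1}(\eta)$ is $O(1)$ uniformly in $n$ and $\eta$.

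For the far part, I perform a dyadic decomposition $A_k = \{\gamma \in E_n : 2^k r_n \leq i(\xi_\gamma,\eta) < 2^{k+1} r_n\}$ for $0 \leq k \leq K_n$, where Minsky forces $K_n = O(n)$. The key ingredients are the counting bound
\[
|A_k| \;\lesssim\; e^{hn}\,\nu\!\left(\{x: i(x,\eta) \leq 2^{k+1} r_n\}\right)
\]
and the regularity $\nu(\{x: i(x,\eta) \leq 2^{k+1}r_n\}) \leq D (2^{k+1}r_n)^{h/2}$, which together make each dyadic contribution $|A_k|/(2^k r_n)^{h/2}$ bounded independently of $k$ and $n$. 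Summing over $K_n = O(n)$ scales gives
\[
\sum_{\gamma \in E_n^{\mathrm{far}}} \frac{1}{i(\xi_\gamma,\eta)^{h/2}} \;\lesssim\; n\, e^{hn},
\]
which after the prefactor $C_1/(n|E_n|)$ again contributes $O(1)$. Adding close and far yields the desired uniform $L^\infty$ bound.

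The main technical obstacle is the uniform-in-$k$ lattice count $|A_k| \lesssim e^{hn}\nu(W_k)$: the sets $W_k = \{x: i(x,\eta) \leq 2^{k+1}r_n\}$ shrink with $n$, so Theorem \ref{ABEM2}, which is only asymptotic for a fixed Borel set, does not apply directly. I plan to bypass this by a shadow-covering argument — each shadow $Pr(B(\gamma\cdot o,R))$ has $\nu$-measure $\asymp e^{-hn}$ by the Shadow Lemma (Lemma \ref{lemma:Shodowlemma}), and a bounded-multiplicity Vitali-type packing of such shadows inside $W_k$ yields the count — this being the ``counting lattice points in balls'' mentioned in the Strategy of the Proof as the technical heart of this step.
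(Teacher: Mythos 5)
Your proposal follows the same high-level strategy as the paper --- convert each summand to intersection-number data via Minsky's inequality and the Harish-Chandra lower bound, split $E_n$ at the scale $e^{-2n}$, bound the ``close'' part by the counting lemma (Corollary \ref{corollary:boundednumberpoints}) together with the trivial Kerckhoff bound, and bound the ``far'' part by the regularity estimate --- but you organize the far part differently. You propose a dyadic decomposition $A_k = \{\gamma : 2^k r_n \le i(\xi_\gamma,\eta) < 2^{k+1}r_n\}$ with the per-scale lattice count $|A_k| \lesssim e^{hn}\nu(W_k)$, whereas the paper avoids any dyadic count by converting the sum over $F_n^1$ (those $\gamma$ whose shadow misses the small ball $\mathcal{IN}(\zeta,n,C)$) directly into an integral: it replaces $(1/\E_{\gamma\cdot o}(\zeta))^{h/4}$ by the average of the Minsky integrand $i(\cdot,\zeta)^{-h/2}$ over the shadow $\mathcal{O}(\gamma\cdot o,R)$, uses the Shadow Lemma and bounded multiplicity of shadows to pass from the sum of these averages to a single integral $\Psi(\zeta)_{\ge Ce^{-2n}}$, and then invokes Theorem \ref{theorem:psiestimation}. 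The two routes rest on the same geometry --- Shadow Lemma, bounded multiplicity of shadows, Harish-Chandra estimate, and the $\nu$-regularity of intersection-number balls --- but the paper's integral comparison sidesteps precisely the ``uniform-in-$k$ count'' that you flag as the obstacle.

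The gap you acknowledge is genuine and nontrivial. Your packing argument needs the shadows $Pr(B(\gamma\cdot o,R))$ for $\gamma \in A_k$ to be contained in a bounded enlargement of $W_k = \{x : i(x,\eta) \le 2^{k+1}r_n\}$, but intersection number has no triangle inequality, so this containment must be extracted from a thin-triangle/fellow-travelling argument of the kind used in Lemma \ref{lemma:exponentialapproximate} and Lemma \ref{lemma:twoneigborhood}, and this requires tracking where $[o,\eta)$ and $[o,\xi_\gamma)$ diverge and arguing in the thick part (the hypotheses on $E_n$ and $\eta \in U(\theta,\epsilon)$ are exactly what make this possible). The paper circumvents this by its three-way split $E_n = F_n^1 \sqcup F_n^2 \sqcup E_n^2$: for $F_n^1$ every point of the shadow automatically lies in the region $\{i(\cdot,\zeta) > Ce^{-2n}\}$ over which $\Psi$ is integrated, so no containment estimate is needed; and $F_n^2 \cup E_n^2$ is bounded by $\prec n$ via Lemma \ref{lemma:twoneigborhood} and a fellow-travelling argument (Theorem \ref{theorem:fellowtravelling}) transferring the thick-part condition from $[o,\gamma\cdot o]$ to the nearby geodesic $[o,q]$. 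If you wish to complete your route, the missing lemma is essentially: for $\gamma \in E_n$ with direction $\xi_\gamma$ satisfying $i(\xi_\gamma,\eta) \le Ne^{-2n}$ and $\eta \in U(\theta,\epsilon)$, any $\xi' \in Pr(B(\gamma\cdot o,R))$ satisfies $i(\xi',\eta) \prec N e^{-2n}$ --- and proving this would require essentially the same geometric work as the paper's Lemma \ref{lemma:twoneigborhood}, so you would not be saving effort, only reorganizing it.
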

Recall that
\begin{displaymath}
\begin{aligned}
& M^{\mathds{1}_{\PMF(S)}}_{E_n}\mathds{1}_{\PMF(S)}([\xi])\\
&\phantom{=\;\;} = \frac{1}{|E_n|}\sum_{\gamma \in E_n}\frac{\pi_{\nu}(\gamma)\mathds{1}_{\PMF(S)}([\xi])}{\Phi(\gamma)}\\
&\phantom{=\;\;} = \frac{1}{|E_n|}\sum_{\gamma \in E_n}\left(\frac{\E_{o}(\xi)}{\E_{\gamma.o}(\xi)}\right)^{\frac{h}{4}}\frac{1}{\Phi(\gamma)}.
\end{aligned}
\end{displaymath}
By using the embedding map $\tau$ of $\PMF(S)$ into $\MF(S)$. One can rewrite the above formula to be
\begin{displaymath}
\begin{aligned}
& M^{\mathds{1}_{\PMF(S)}}_{E_n}\mathds{1}_{\PMF(S)}([\xi])\\
&\phantom{=\;\;} = \frac{1}{|E_n|}\sum_{\gamma \in E_n}\left(\frac{1}{\E_{\gamma.o}(\xi)}\right)^{\frac{h}{4}}\frac{1}{\Phi(\gamma)}.
\end{aligned}
\end{displaymath}
We first introduce a type of open sets $\mathcal{IN}$ in $\PMF(S)$ defined by intersection numbers. For every $\eta \in \PMF(S), C > 0, t > 0$, define
\begin{displaymath}
\begin{aligned}
& \mathcal{IN}(\eta,t,C) = \{\xi \in \PMF(S): i(\xi,\eta) \leq Ce^{-2t} \}.
\end{aligned}
\end{displaymath}

\begin{proof}[Proof of Theorem \ref{uniformbounded}]
Fix $R > R_0$ where $R_0$ is the constant in Lemma \ref{lemma:Shodowlemma}. Let $\epsilon$ sufficiently small so that for all $g \in \M(S)$, the open ball $B(g \cdot o, R) \subset \T_{\epsilon}(S)$ and Theorem \ref{theorem:DowDucMas} holds. We know that $U(\epsilon, \theta)$ is a subset of $\PMF(S)$ of full measure. We shall give a bound independent on $n \gg \rho$ for $ M^{\mathds{1}_{\PMF(S)}}_{E_n}\mathds{1}_{\PMF(S)}(\zeta)$ for every point $\zeta \in U(\epsilon, \theta)$.  As usual, for $\gamma \in E_n$, denote $\xi_{\gamma}$ to be the direction corresponding to $[o,\gamma \cdot o]$, hence a point in $\PMF(S)$. For each point $\gamma \cdot o$, consider the open ball $B(\gamma, R)$ of radius $R$ at $\gamma \cdot o$. Denote the projection of $B(\gamma, R)$ to $\PMF(S)$ by $\mathcal{O}(\gamma \cdot o,R)$. Then by  Lemma \ref{lemma:Shodowlemma}, the measure $\nu(\mathcal{O}(\gamma \cdot o,R)) \sim_{g,R,\rho} e^{-hn}$.
	Fix any $C \geq 1$, for instance $C = 1$. Divide $E_n$ to be two sets $E^1_n$ and $E^2_n = E_n-E^1_n$ where $E^1_n$ consists of $\gamma \in E_n$ so that $\xi_{\gamma} \notin \mathcal{IN}(\zeta, n, C) $. We then have, for each $\zeta \in U(\epsilon,\theta)$,
	\begin{equation} \label{equation:uniformboundedness}
	\begin{aligned}
	& M^{\mathds{1}_{\PMF(S)}}_{E_n}\mathds{1}_{\PMF(S)}(\zeta)\\
	&\phantom{=\;\;} = \frac{1}{|E_n|}\sum_{\gamma \in E_n}\left(\frac{1}{\E_{\gamma.o}(\zeta)}\right)^{\frac{h}{4}}\frac{1}{\Phi(\gamma)}\\
	&\phantom{=\;\;} = \underbrace{\frac{1}{|E_n|}\sum_{\gamma \in E^1_n}\left(\frac{1}{\E_{\gamma.o}(\zeta)}\right)^{\frac{h}{4}}\frac{1}{\Phi(\gamma)}}_{I} + \underbrace{\frac{1}{|E_n|}\sum_{\gamma \in E^2_n}\left(\frac{1}{\E_{\gamma.o}(\zeta)}\right)^{\frac{h}{4}}\frac{1}{\Phi(\gamma)}}_{II}\\
	&= {\rm I + II}.
	\end{aligned}
	\end{equation}
	
	First we want to bound term ${\rm I}$ in Equation (\ref{equation:uniformboundedness}). The set $E^1_n$ can be further decomposed into two sets: $F^1_n$ and $F^2_n = E^1_n - F^1_n$, where $F^1_n = \{ \gamma \in E^1_n: \mathcal{O}(\gamma \cdot o,R) \cap \mathcal{IN}(\zeta,n,C) = \emptyset\}.$ One then has,
	\begin{equation}
	\begin{aligned}
	&{\rm I} = \underbrace{\frac{1}{|E_n|}\sum_{\gamma \in F^1_n}\left(\frac{1}{\E_{\gamma.o}(\zeta)}\right)^{\frac{h}{4}}\frac{1}{\Phi(\gamma)}}_{III} + \underbrace{ \frac{1}{|E_n|}\sum_{\gamma \in F^2_n}\left(\frac{1}{\E_{\gamma.o}(\zeta)}\right)^{\frac{h}{4}}\frac{1}{\Phi(\gamma)}}_{IV}\\
	&\phantom{=\;\;} = {\rm III + IV}.
	\end{aligned}
	\end{equation}

	For the term ${\rm III}$. First notice that $$\forall y \in B(\gamma \cdot o,R), \frac{1}{\E_{\gamma \cdot o}(\zeta)} \sim_{R} \frac{1}{\E_{y}(\zeta)}.$$ Now, by Lemma \ref{Minskyinequailty}, for $\nu-$almost every $\xi_y \in \mathcal{O}(\gamma \cdot o, R)$,$$ \left(\frac{1}{\E_y(\zeta)}\right)^{\frac{h}{4}} \prec_{\rho,R} \frac{1}{e^{\frac{hn}{2}} (i(\xi_y, \zeta))^{\frac{h}{2}}}.$$ Hence, for $\nu-$almost every $ \xi \in \mathcal{O}(\gamma \cdot o,R),$
	\begin{displaymath}
	\begin{aligned}
	&\left(\frac{1}{\E_{\gamma \cdot o}(\zeta)}\right)^{\frac{h}{4}} \prec_{\rho,R} e^{-\frac{hn}{2}} \frac{1}{(i(\xi, \zeta))^{\frac{h}{2}}}.
	\end{aligned}
	\end{displaymath}
	Therefore, 
	\begin{displaymath}
	\begin{aligned}
	& {\rm III} = \frac{1}{|E_n|}\sum_{\gamma \in F^1_n}\left(\frac{1}{\E_{\gamma.o}(\zeta)}\right)^{\frac{h}{4}}\frac{1}{\Phi(\gamma)}\\
	& \prec_{R} \frac{1}{|E_n|}\sum_{\gamma \in F^1_n} \frac{e^{-\frac{hn}{2}}}{\nu(\mathcal{O}(\gamma \cdot o, R))} \int_{\mathcal{O}(\gamma \cdot o, R)}\frac{1}{(i(\eta, \zeta))^{\frac{h}{2}}}d\nu(\eta) \frac{1}{\Phi(\gamma)}.
	\end{aligned}
	\end{displaymath}
	
	Note that there are bounded number of intersections of open sets of the form $\mathcal{O}(\gamma \cdot o,R)$ and the bound depends on $R$ and $\rho$. Thus, since $|E_n| \asymp e^{hn}$ (Corollary \ref{corollary:exponentialgrowth}) and $\Phi(\gamma) \succ_{g,o,\rho} (a_1n-c_1\ln \ln n +b_1)e^{-\frac{hn}{2}}$ (Harish-Chandra estimates), substitute all these together, one has,
	\begin{equation}
	\begin{aligned}
	&{\rm III} \prec_{g,o,\rho,R} \frac{1}{a_1n-c_1\ln \ln n +b_1}\int_{\{\eta \in \PMF(S): i(\eta,\zeta) > Ce^{-2n}\}}\left(\frac{1}{i(\eta, \zeta)}\right)^{\frac{h}{2}}d\nu(\eta)\\
	& \prec_{g,o,\rho,R} 1.
	\end{aligned}
	\end{equation}
	The last inequality follows from the fact that $\zeta \in U(\epsilon,\theta)$ and Theorem \ref{theorem:psiestimation}. \\
	
	For terms ${\rm IV}$ and ${\rm II} $. Take $H_n = E^2_n \cup F^2_n$. These two terms can be put together to obtain:
	\begin{equation}
	\begin{aligned}
	& {\rm IV + II} \\
	& = \frac{1}{|E_n|}\sum_{\gamma \in H_n}\left(\frac{1}{\E_{\gamma.o}(\zeta)}\right)^{\frac{h}{4}}\frac{1}{\Phi(\gamma)}\\
	&\leq \frac{1}{|E_n|}\sum_{\gamma \in H_n} \frac{e^{\frac{hL(\gamma)}{2}}}{\Phi(\gamma)}\\
	& \sim_{g,\rho,o} e^{-hn} \sum_{\gamma \in H_n}\frac{e^{\frac{hn}{2}}}{(a_1n-c_1\ln \ln n+b_1)e^{\frac{-hn}{2}}}\\
	& = \frac{1}{a_1n-c_1\ln\ln n+b_1}|H_n|.	
	\end{aligned}
	\end{equation}
	We now \textit{claim:}  $|H_n| \prec_{g,o,\rho,\epsilon,\theta,R} n$. Thus the sum ${\rm IV + II}$ tends to $0$ when $n \rightarrow \infty$ which complete the proof of the theorem.\\
	
	It remains to prove the above \textit{claim}. 
	\begin{proof}[Proof of the \textit{claim}]
		By Corollary \ref{corollary:boundednumberpoints}, the number $|E^2_n| \prec n$. We now show that so is $|F^2_n|$. By the choice of $R$ and $\epsilon$, for every $\gamma \in \M(S)$ and every point $q \in B(\gamma \cdot o, R)$, we have $q \in \T_{\epsilon}(S) $ and $d_T(\gamma \cdot o, q) \prec_{R} 1$. Assume now that $\gamma \in F^2_n$, namely $\mathcal{O}(\gamma \cdot o, R) \cap \mathcal{IN}(\zeta, n, C) \ne \emptyset$. As $U(\epsilon, \theta)$ has full measure, in particular, it is dense in $\PMF(S)$, thus one can choose $q \in B(\gamma \cdot o, R)$ so that the direction $\xi_q$ of $[o,q]$ is in $U(\epsilon, \theta) \cap \mathcal{IN}(\zeta, n, C)$. By Theorem \ref{theorem:fellowtravelling}, there is a $P = P(\epsilon,R)$, so that the two geodesics $[o,\gamma \cdot o]$ and $[o,q]$ are $P-$fellow travelling in a parametrized fashsion. Now consider the $P-$neighborhood $\mathcal{N}_P$ of $\T_{\epsilon}(S)$, namely the union of points in $\T(S)$ that has distance at most $P$ with a point in $\T_{\epsilon}(S)$. As $\M(S)$ acts as isometries on $\T(S)$ and $\T_{\epsilon}(S)$ is $\M(S)-$invariant and cocompact, the neighborhood $\mathcal{N}_P$ is $\M(S)-$invariant and cocompact. By Mumford's compactness, there is a small $\epsilon'$ so that $\mathcal{N}_P \subset \T_{\epsilon'}(S)$. Then as $\gamma \in E_n$, the geodesic segment $[o,q]$ has the property that it contains a segment $I= [a,q]$ of length $\frac{1}{3h}\ln \ln n$ such that $I$ has at least $\theta$ in $\T_{\epsilon'}(S)$. By Theorem \ref{theorem:DDM}, there are constants $D'= D'(\epsilon',\theta)$ and $L_0'= L_0'(\epsilon',\theta)$ satisfy Theorem \ref{theorem:DDM}. Take $n$ large enough and repeat the proof of Lemma \ref{lemma:twoneigborhood} and Corollary \ref{corollary:boundednumberpoints}, one has $|F^2_n| \prec n$.	
	\end{proof}
\end{proof}


\bibliography{unitary}
\addcontentsline{toc}{section}{References}
\bigskip
\noindent
Faculty of Mathematics, Technion-Israel Institute of Technology, Haifa, Israel
3200003\\
\noindent \textbf{Email address: biaoma@campus.technion.ac.il}
\end{document}